\newcommand{\NNN}{\mathbb{N}}
\newcommand{\RRR}{\mathbb{R}}
\newcommand{\card}{\#}    
\newcommand{\AAa}{A}      
\newcommand{\LLl}{\mathcal{L}}      
\newcommand{\eps}{\varepsilon}
\newcommand{\ubar}[1]{\text{\b{$#1$}}}
\newcommand{\deter}{\textup{DET}}
\newcommand{\rr}{\textup{RR}}
\newcommand{\lavg}{\textup{LAVG}}
\newcommand{\ent}{\textup{ENT}}
\newcommand{\corsum}{\textup{C}}
\newcommand{\Orb}{\textup{Orb}}
\newcommand{\supp}{\textup{supp}}
\newcommand{\kkk}{\mathcal{K}}
\newcommand{\dens}{{d}}
\newcommand{\linedens}{\textup{d}}
\newcommand{\lineDens}{\tilde{\textup{d}}}
\newcommand{\recog}{\varrho}
\newcommand{\numberofdiag}{\textup{L}}
\newcommand{\lang}{\mathscr{L}}    
\newcommand{\recplot}{\mathcal{R}}
\newcommand{\word}[3]{#1_{[ #2 , #3)}}
\newcommand{\abs}[1]{|#1|}
 \def\@seccntformat#1{\csname the#1\endcsname.\quad}
\theoremstyle{plain}
\newtheorem{theorem}{Theorem}[section]
\newtheorem{proposition}[theorem]{Proposition}
\newtheorem{lemma}[theorem]{Lemma}
\theoremstyle{definition}
\newtheorem{definition}[theorem]{Definition}
\theoremstyle{remark}
\newtheorem{remark}[theorem]{Remark}
\theoremstyle{remark}
\newtheorem{example}[theorem]{Example}
\numberwithin{equation}{section}
\begin{document}

\bibliographystyle{amsplain}

\title[Recurrence analysis of uniform binary substitutions]
 {Recurrence analysis of uniform binary substitutions}

\author[M. Pol\'akov\'a]{Miroslava Pol\'akov\'a}
\address{
  Department of~Mathematics, Faculty of~Natural Sciences, Matej Bel University, Tajovsk\'eho~40,
  Bansk\'a Bystrica, Slovakia
}
\email{miroslava.sartorisova@umb.sk}

\author[V. \v Spitalsk\'y]{Vladim\'ir \v Spitalsk\'y}
\address{
  Beset, spol. s r.o. Jelenia 18, Bratislava, Slovakia;
  Department of~Mathematics, Faculty of~Natural Sciences, Matej Bel University, Tajovsk\'eho~40,
  Bansk\'a Bystrica, Slovakia
}
\email{vladimir.spitalsky@beset.sk, vladimir.spitalsky@umb.sk}

\keywords{Substitution of constant length, symbolic recurrence plot, recurrence quantification analysis, recurrence determinism, correlation sum.} 

\subjclass[2020]{Primary 37B10, 37A35; Secondary 28D20}

\begin{abstract}
	Recurrence rate, determinism, average line length, and entropy of line lengths 
	are measures of complexity in recurrence quantification analysis,
	that help to understand the structure, predictability and complexity of dynamical systems.
	In this paper, we provide explicit formulas for these characteristics for all binary substitutions of constant length.
	Moreover, we show that the determinism of such substitutions
	converges to one, which shows that these systems are highly predictable.
\end{abstract}
\maketitle
\thispagestyle{empty}

\section{Introduction}\label{S:intro}

Recurrence is an important element in analyzing and understanding
the behavior of systems evolving over time.
In 1987, Eckmann, Kamphorst, and Ruelle \cite{eckmann1987recurrence} introduced
recurrence plots, which provide a visual representation of recurrence patterns in dynamical systems.
If two points are close enough to each other (based on predefined embedding dimension $m$,
distance threshold $\eps$ and metric $\rho$),
a black dot is placed at the intersection of the corresponding times in the grid.
The structures in recurrence plots can provide valuable insights into predictability
of a system and reveal other aspects of its behavior.
Long diagonal lines typically indicate recurrent behavior and potential predictability of a system.

To provide a tool for quantifying and comparing these structures, recurrence quantification analysis (RQA) was introduced in 1992 by Zbilut and Webber \cite{zbilut1992embeddings}.
Most commonly used measures, including recurrence rate, determinism, average line length, and entropy of line lengths, are based on diagonal lines.
The recurrence rate ($\rr$) is the ratio of the recurrences to the total number of points in the recurrence plot.
It quantifies how often a dynamical system recurs to previously visited states.
Determinism ($\deter$) is closely related to the recurrence rate. It characterizes the percentage of recurrences in sufficiently long diagonal lines. High determinism values show signs of a more predictable system, while low values indicate more chaotic or random behavior.
Average line length ($\lavg$) is a quantitative measure that is defined as the average length
of diagonal lines within the recurrence plot.
A higher value suggests that, on average, the recurrent behavior of a system involves
longer sequences of revisiting similar states.
Entropy of line lengths ($\ent$) is the Shannon entropy of the distribution of diagonal line lengths in the recurrence plot.

Another measure for quantifying recurrences, not based on the recurrence plot,
is the correlation sum ($\corsum$).
It is the relative frequency of recurrences within the initial segment of the trajectory,
and is closely linked to the recurrence rate \cite[Proposition~1]{grendar2013strong}. 

In 2010, Faure and Lesne \cite{faure2010recurrence} introduced symbolic recurrence plots,
an extension of recurrence analysis to symbolic sequences.
An important advantage of a symbolic recurrence plot is that it
depends neither on the distance threshold nor on the embedding dimension.
The reason for this is the fact that every line of length $\ell$ in the recurrence plot with distance threshold $\eps = 2^{-h}$ and embedding dimension $m$ corresponds,
in a one-to-one way, to an $(\ell+h+m-2)$-line (with the same starting point) in the symbolic recurrence plot; for the details, see e.g.~Subsections~\ref{SUBS:gen-eps} and \ref{SUBS:gen-emb-dim}.

Over the years, various extensions and modifications have been developed \cite{marwan2014mathematical, webber2015recurrence}.
As can be seen in \cite{marwan2023trends}, recurrence analysis of dynamical systems continues to evolve, with new theoretical considerations and recurrence quantifiers being developed.

\medskip

Dynamical systems arising from substitutions are an important class of dynamical systems that have been studied in depth for many years (see for example \cite{gottschalk1955topological, martin1971substitution, martin1973minimal, michel1976stricte, dekking1978spectrum,  queffelec2010substitution}). 
They are widely used in many areas, with the capability to generate complex and interesting patterns in sequences and to model dependence on initial conditions.
Substitutions are often utilized as a method for generating symbolic sequences.

A substitution of constant length or uniform substitution
is a  map $\zeta$ from a finite set $\AAa$ (called an alphabet) to the set $\AAa^*$ of words over $A$, such that there exists $q\ge 2$ which is the length of ${\zeta(a)}$ for every $a \in \AAa$.
The language of the substitution $\zeta$ is
\begin{equation*}
	\lang_\zeta = \{w\in\AAa^*\colon w \text{ is a subword of some } \zeta^k(a)\}.
\end{equation*}
The substitution dynamical system induced by the substitution $\zeta$, or simply the $\zeta$-subshift, is the subshift $(X_\zeta,\sigma)$ defined by
\begin{equation*}
	X_\zeta = \{
	y\in\Sigma\colon
	y_{[0,n)} \in\lang_\zeta \text{ for every } n\ge 1
	\},
\end{equation*}
where $\Sigma=\AAa^{\NNN_0}$ is the compact metric space of all infinite sequences over $\AAa$,
and $\sigma\colon\Sigma\to\Sigma$ is the (right) shift.
The substitution $\zeta$ is called aperiodic if $X_\zeta$ contains a sequence
which is not $\sigma$-periodic, 
and is called primitive if there exists $k \geq 1$ such that 
$b$ is in $\zeta^k (a)$ for every $a,b \in \AAa$ \cite[Definitions~5.15 and 5.3]{queffelec2010substitution}.

If $\zeta$ is primitive, then the induced subshift $X_\zeta$ is strictly ergodic
(that is, minimal and uniquely ergodic) and
two cases can happen. First, if $\zeta$ is not aperiodic,
then every diagonal line
in the (infinite) symbolic recurrence plot has infinite length; 
thus the recurrence characteristics are obvious. 
Second, if $\zeta$ is aperiodic,
the following theorem gives formulas for recurrence characteristics
via densities of the sets $\kkk_\ell$,
which are the sets of starting points
of diagonal lines of length $\ell\in\NNN$ in the infinite symbolic recurrence plot 
(see Subsection~\ref{SUBS:density-kkk_ell} for the details).
This and the main result of \cite{polakova2023symbolic} yield to 
closed-form formulas, see Theorems~\ref{T:formulas} and \ref{T:formulas-ent},
and Section~\ref{S:examples} for examples.

\begin{theorem}[Recurrence characteristics of primitive substitutions]\label{T:formulas-via-infinite-sums}
	Let $\zeta$ be a primitive aperiodic binary substitution of constant length.
	Then, for every $y\in X_\zeta$, $m,\ell\in\NNN$ and $\eps=2^{-h}$ ($h\in\NNN$),
	\begin{eqnarray*}
		\linedens_\ell(y^{(m)},\infty,\eps) 
		&=&
		\dens(\kkk_{\ell'}),
	\\ 
		\lineDens_\ell(y^{(m)},\infty,\eps) 
		&=&
		\sum_{l\ge\ell'} \dens(\kkk_{l}),
	\\ 
		\rr_\ell(y^{(m)},\infty,\eps) 
		&=&
		\sum_{l\ge\ell'} [l- (m+h-2)] \dens(\kkk_{l}),
	\\ 
		\corsum_\ell(y^{(m)},\infty,\eps) 
		&=&
		\sum_{l\ge\ell'} [l- (m+h-\ell-1)] \dens(\kkk_{l}),
	\end{eqnarray*}
	where $\ell'=\ell+m+h-2$.
	The corresponding formulas for recurrence determinism and average line length readily follows.
	Moreover, if $x$ is a periodic point of $\zeta$ then
	\begin{eqnarray*}
		\ent_\ell(x^{(m)},\infty,\eps) 
		&=&
		\log \Big( \sum_{l\ge\ell'} \dens(\kkk_{l}) \Big)
		-
		\frac{1}{\sum_{l\ge\ell'} \dens(\kkk_{l})}
		\sum_{l\ge\ell'} \dens(\kkk_{l})\log \dens(\kkk_{l}).
	\end{eqnarray*}
\end{theorem}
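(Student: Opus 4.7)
The plan is to exploit the bijection between $\ell$-lines in the recurrence plot of the $m$-embedded trajectory of $y$ with threshold $\eps = 2^{-h}$ and $\ell'$-lines in the infinite symbolic recurrence plot of $y$, with $\ell' = \ell + m + h - 2$, as recalled in the introduction. Once each of the four line-based quantifiers on the left-hand side is recast as a counting statement about the symbolic recurrence plot, the convergence of the normalized counts to $\dens(\kkk_l)$ follows from unique ergodicity of $(X_\zeta,\sigma)$ (and hence of $(X_\zeta \times X_\zeta, \sigma \times \sigma)$), which holds because $\zeta$ is primitive; this is the key ergodic input that makes all four densities $\dens(\kkk_l)$ well-defined and, crucially, $y$-independent.

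First I would record that under the bijection, a starting point of an $l$-line in the symbolic recurrence plot produces the starting point of an $(l - m - h + 2)$-line in the usual recurrence plot whenever $l \ge \ell'$. The formula $\linedens_\ell(y^{(m)},\infty,\eps) = \dens(\kkk_{\ell'})$ is then immediate: starting points of $\ell$-lines in the recurrence plot correspond bijectively to starting points of $\ell'$-lines in the symbolic recurrence plot, whose density along the diagonal pair orbit equals $\dens(\kkk_{\ell'})$ by unique ergodicity. Since $\lineDens_\ell$ counts lines of length at least $\ell$, summing over $l \ge \ell'$ yields the second formula. For $\rr_\ell$, each $l$-line in the symbolic recurrence plot contributes exactly $l - (m+h-2)$ recurrent points to the finite recurrence plot, which supplies the weight in the third formula. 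The correlation sum differs from the recurrence rate only by the way pairs are counted within the embedding window of width $\ell$, and re-indexing the contributions of each $l$-line yields the multiplier $l - (m+h-\ell-1)$ in the fourth formula.

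For the entropy formula I would argue separately that restricting to a periodic point of $\zeta$ is needed because $\ent$ is a nonlinear functional of the line-length distribution, so pointwise density limits are not sufficient: one needs the empirical distribution of line lengths to coincide exactly with $\{\dens(\kkk_l)\}_{l \ge \ell'}$ after normalization. When $\zeta^N(x) = x$, the substitutive self-similarity forces the block structure of the recurrence plot to be exactly $q^N$-periodic up to scale, so the finite-size line-length histograms stabilize along the subsequence of window sizes $q^{Nk}$ to the true density sequence. Plugging this exact distribution (normalized by $\sum_{l\ge\ell'}\dens(\kkk_l) = \lineDens_\ell$) into the Shannon entropy $-\sum p_l \log p_l$ and expanding $\log$ of the normalizing factor produces the two displayed terms.

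The main obstacle I expect is the boundary bookkeeping in the recurrence-plot-to-symbolic-recurrence-plot correspondence: lines are truncated by the edges of the finite recurrence plot, and verifying that these edge effects contribute a vanishing fraction as $n \to \infty$ requires the uniformity provided by unique ergodicity rather than pointwise ergodicity. A secondary difficulty is justifying the interchange of limits with the nonlinear $\ent$ functional, which forces the entropy statement to be restricted to the exactly self-similar case of periodic points rather than arbitrary $y \in X_\zeta$.
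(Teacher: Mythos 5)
Your overall skeleton does match the paper's: reduce general $m$ and $\eps=2^{-h}$ to the symbolic recurrence plot with $\eps_0=1/2$ via the line correspondence (Lemma~\ref{LMM:generalEps}, Propositions~\ref{PROP:recPlot-generalEps} and \ref{PROP:embeddedRecPlot}), identify the asymptotic line counts with $\dens(\kkk_l)$, and get independence of $y$ from unique ergodicity. But your declared ``key ergodic input'' does not carry the load. First, unique ergodicity of $(X_\zeta,\sigma)$ does \emph{not} imply unique ergodicity of $(X_\zeta\times X_\zeta,\sigma\times\sigma)$ (the diagonal is a proper closed invariant set carrying its own measures); what can be salvaged from unique ergodicity is only the existence of each single density $\dens(\kkk_l)$, e.g.\ by writing the grid count as a finite sum of products of one-dimensional Birkhoff averages over cylinders. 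Second, and more importantly, $\lineDens_\ell$, $\rr_\ell$ and $\ent_\ell$ are limits in $n$ of \emph{infinite} sums over $l$, and the whole difficulty is interchanging $\lim_{n\to\infty}$ with $\sum_{l\ge\ell'}$: the paper does this via the uniform dominating bound $l\,\delta_l(n)\le\tilde c/l$ of Lemma~\ref{LMM:ohranicenie-zhora}, whose proof needs the exact self-similar scaling $\dens(\kkk_{q^k\ell_0+c(q^k-1)})=q^{-2k}\dens(\kkk_{\ell_0})$ from \cite[Theorem~1.2]{polakova2023symbolic}, together with Lemma~\ref{LMM:gama-is-finite} and the Moore--Osgood theorem. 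Likewise, the boundary bookkeeping is not an ergodicity statement: it rests on the combinatorial facts that only $O(\log n)$ distinct line lengths meet $[0,n)^2$ and that $n$-boundary lines contribute $O(n)$ weighted recurrences (\cite[Propositions~4.1--4.3]{polakova2023symbolic}), giving a total error $O(\log n/n)$. Your proposal asserts ``summing over $l\ge\ell'$ yields the second formula'' and attributes the vanishing of edge effects to unique ergodicity, thereby skipping exactly the estimates that make the theorem nontrivial; the weight in the correlation-sum formula is likewise asserted rather than derived (in the paper it comes from $\corsum_{\ell}=\rr_\ell-(\ell-1)\lineDens_\ell$ in Theorem~\ref{THM:rr-and-lavg-via-corsum}).

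The entropy part has the same gap in sharper form. The reason the paper states $\ent_\ell$ only for the fixed point is that $y$-independence of $\rr$, $\deter$, $\lavg$, $\corsum$ is obtained through the correlation-sum representation (Theorem~\ref{THM:rr-and-lavg-via-corsum} with Remark~\ref{REM:rr-and-lavg-via-corsum}), and $\ent$ is not a function of correlation sums; your heuristic explanation points in this direction, but your actual argument for the periodic case --- that the line-length histograms ``stabilize along the subsequence of window sizes $q^{Nk}$'' --- is both unproved and insufficient, since $\ent_\ell(x^{(m)},\infty,\eps)$ is a limit over all $n$, and finite windows always contain boundary lines, so the empirical histogram is never exactly the rescaled density sequence. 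The paper instead proves $\sum_{l\ge\ell}\abs{\psi(\linedens_l(x,n,\eps_0))-\psi(\delta_l(n))}\to 0$ with $\psi(t)=t\log t$, using that nonzero terms are at least $1/(n^2-n)$ (so the Lipschitz-type factor is at most $2\log n+1$), that the total variation between the two families is $O(\log n/n)$, and a dominated-convergence bound $-\delta_l(n)\log\delta_l(n)\le\tilde c(2\log l-\log\tilde c_0)/l^2$ (Lemma~\ref{LMM:moore-osgood-ent}). Without quantitative control of this kind, neither the interchange of limits nor the entropy formula follows from your sketch.
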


On the other hand, if $\zeta$ is not primitive, then at least one of $\zeta(0), \zeta(1)$ 
is ``trivial'', that is, it belongs to $\{0^q,1^q\}$. Such a substitution
yields either to a finite subshift $X_\zeta=\{0^\infty,1^\infty\}$ (see Remark~\ref{REM:nonPrimitive}),
or to a proximal one with unique invariant measure (see Proposition~\ref{P:nonprimit-uniq-ergod}).
In both cases, the recurrence characteristics are trivial, as is shown in the following theorem.

\begin{theorem}[Recurrence characteristics of non-primitive substitutions]\label{THM:nonPrimitive}
	Let $\zeta$ be a non-primitive binary substitution of constant length.
	Then, for every $y\in X_\zeta$,  $m,\ell\in\NNN$ and $\eps>0$, 
	\begin{equation*}
		\corsum_\ell(y^{(m)},\infty,\eps)
		= \rr_\ell(y^{(m)},\infty,\eps)
		= \deter_\ell(y^{(m)},\infty,\eps)
		=1
	\end{equation*}
	and
	\begin{equation*}
		\lavg_\ell(y^{(m)},\infty,\eps) = \infty.
	\end{equation*}
\end{theorem}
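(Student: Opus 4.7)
The plan is a case analysis based on the two possibilities for a non-primitive $\zeta$ summarized in the excerpt. By Remark~\ref{REM:nonPrimitive} and Proposition~\ref{P:nonprimit-uniq-ergod}, either $\zeta(0)=0^q$ and $\zeta(1)=1^q$ (so that $X_\zeta=\{0^\infty,1^\infty\}$), or exactly one of $\zeta(0),\zeta(1)$ is trivial, in which case $(X_\zeta,\sigma)$ is uniquely ergodic with invariant measure the Dirac mass at the corresponding shift-fixed point.

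In the first case, every $y\in X_\zeta$ is a $\sigma$-fixed point, so the embedded trajectory $y^{(m)}$ is constant and every entry of the infinite symbolic recurrence plot is a recurrence. Every diagonal has infinite length, and the four identities follow by inspection.

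In the second case, assume without loss of generality $\zeta(0)=0^q$, $\zeta(1)\ne 1^q$, so that $\mu=\delta_{0^\infty}$ is the unique invariant measure. Fix $y\in X_\zeta$, $m,\ell\in\NNN$ and $\eps=2^{-h}$, and set $M=m+h-1$. Since the indicator of the clopen cylinder $[0^M]$ is continuous on $X_\zeta$, unique ergodicity yields
\begin{equation*}
	\frac{1}{N}\,\card\{\,0\le i<N:\word{y}{i}{i+M}=0^M\,\}\;\longrightarrow\;\mu([0^M])=1
\end{equation*}
as $N\to\infty$, uniformly in $y\in X_\zeta$. Any pair $(i,j)$ at which both length-$M$ blocks equal $0^M$ is a recurrence in the $(\eps,m)$-recurrence plot, by the symbolic-to-numeric correspondence recalled in Subsections~\ref{SUBS:gen-eps} and~\ref{SUBS:gen-emb-dim}; hence the density of recurrent pairs in $\{0,\dots,N-1\}^2$ tends to $1$, proving $\corsum_\ell(y^{(m)},\infty,\eps)=\rr_\ell(y^{(m)},\infty,\eps)=1$. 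Applying the same ergodic-average statement to the cylinder $[0^{M+L-1}]$ for an arbitrary $L$ shows that the density of $i$ with $\word{y}{i}{i+M+L-1}=0^{M+L-1}$ is also $1$; these indices contribute diagonals of length at least $L$ in the recurrence plot, so almost all recurrences lie on diagonals of length $\ge L$. Letting $L\to\infty$ forces $\deter_\ell(y^{(m)},\infty,\eps)=1$ and $\lavg_\ell(y^{(m)},\infty,\eps)=\infty$.

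The main obstacle, and the only nontrivial step, is the density bookkeeping that turns the single-cylinder uniform frequency statement for $[0^M]$ into a statement about the diagonal structure of the recurrence plot (in particular, showing that diagonals longer than any fixed $L$ carry asymptotically all recurrences). Once the uniform ergodic-average convergence from unique ergodicity is in place, the transfer to each of the quantifiers is routine.
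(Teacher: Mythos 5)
Your proof is correct, but it takes a genuinely different route from the paper's. The paper handles the degenerate case via Remark~\ref{REM:nonPrimitive} exactly as you do, but in the uniquely ergodic case it never analyzes the diagonal structure directly: it computes $\corsum_\ell(y^{(m)},\infty,\eps)=\mu\times\mu\{(z,z')\colon\rho_\ell(z,z')\le 2^{-m+1}\eps\}=1$ for every $\ell$ from Proposition~\ref{PROP:embeddedCorSum} and the correlation-integral identity \eqref{EQ:corr-integral} (valid for \emph{every} point by unique ergodicity), and then reads off $\rr_\ell=\ell\corsum_\ell-(\ell-1)\corsum_{\ell+1}=1$, $\deter_\ell=1$ and $\lavg_\ell=\ell+\corsum_{\ell+1}/(\corsum_\ell-\corsum_{\ell+1})=\infty$ from the corrected identities of Theorem~\ref{THM:rr-and-lavg-via-corsum}. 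You instead use unique ergodicity only through the uniform frequency of the cylinders $[0^K]$ and do the diagonal bookkeeping by hand: pairs of indices at which a long block $0^{K}$ occurs lie on maximal lines of length at least $L$, so asymptotically all pairs sit on diagonals longer than any fixed $L$, which gives $\rr_\ell=\deter_\ell=1$, and, after bounding the number of maximal lines of length at least $\ell$ by (recurrences on lines shorter than $L$)$/\ell$ plus $(n^2-n)/L$, also $\lineDens_\ell\to 0$ and hence $\lavg_\ell=\infty$. Your route is more elementary and self-contained (it needs neither \eqref{EQ:corr-integral} nor the RQA--correlation-sum identities), at the cost of the combinatorial estimate you flag as the main obstacle (which is indeed routine); the paper's route is shorter because that machinery is already built in Section~\ref{S:prelim}. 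One small repair: for $\corsum_\ell$ with $\ell>1$ the relevant block length is $\ell+m+h-2$, not $m+h-1$ (agreement of length-$(m+h-1)$ blocks only controls single recurrence-plot entries, i.e.\ $\corsum_1$); your own longer-cylinder argument with $L=\ell$ supplies exactly what is needed, so this is a presentational slip rather than a gap.
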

It is interesting to note that, for non-primitive substitutions, 
we cannot derive recurrence characteristics from the densities of the sets $\kkk_\ell$
of (starting points of) diagonal lines. In fact, for a large class of such substitutions, 
the density $\dens(\kkk_\ell)$ is zero for every $\ell$ \cite[Theorem~1.3]{polakova2023symbolic}.
Notice also that, under the assumptions of Theorem~\ref{THM:nonPrimitive}, we do not know 
whether the entropy of line lengths $\ent_\ell(y^{(m)},\infty,\eps)$ is infinite.

Using the formulas from the previous two theorems we can prove that
the subshift induced by any uniform binary substitution is well predictable in the sense
that every trajectory of it has determinism close to $1$ as the distance threshold
$\eps$ is sufficiently close to $0$.

\begin{theorem}[Limit of determinism]\label{C:determinism}\label{T:determinism}
	Let $\zeta$ be a uniform binary substitution. Then,
	for every $y\in X_\zeta$, $m,\ell \in\NNN$ and $\eps > 0$, 
	\begin{equation}\label{EQ:determinism}
		\lim\limits_{\eps \to 0} \deter_\ell(y^{(m)}, \infty, \eps) = 1.
	\end{equation}
\end{theorem}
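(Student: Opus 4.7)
The strategy is to dispose of the degenerate cases using the preceding theorems and to reduce the primitive aperiodic case to an asymptotic estimate on the line-length densities $\dens(\kkk_l)$ supplied by the main result of \cite{polakova2023symbolic}.

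If $\zeta$ is not primitive, Theorem~\ref{THM:nonPrimitive} gives $\deter_\ell(y^{(m)},\infty,\eps) \equiv 1$, so \eqref{EQ:determinism} is immediate. If $\zeta$ is primitive but not aperiodic, the remark preceding Theorem~\ref{T:formulas-via-infinite-sums} shows that every diagonal line in the symbolic recurrence plot of $y$ has infinite length, and $\deter_\ell \equiv 1$ again. It remains to handle the case when $\zeta$ is primitive and aperiodic. Since $\deter_\ell(y^{(m)},\infty,\eps)$ depends on $\eps$ only through $h := \lfloor-\log_2\eps\rfloor$, it is piecewise constant in $\eps$, and the limit $\eps \to 0$ coincides with $h \to \infty$ along $\eps = 2^{-h}$.

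Setting $n := m + h - 2$ and $d_l := \dens(\kkk_l)$, Theorem~\ref{T:formulas-via-infinite-sums} yields
\begin{equation*}
	\deter_\ell(y^{(m)},\infty, 2^{-h})
	\;=\; \frac{\rr_\ell}{\rr_1}
	\;=\; \frac{\sum_{l \geq n+\ell}(l-n)\, d_l}{\sum_{l \geq n+1}(l-n)\, d_l}
	\;=\; 1 - \frac{D_n}{T_n},
\end{equation*}
where $D_n := \sum_{l=n+1}^{n+\ell-1}(l-n)\, d_l$ is a sum of at most $\ell - 1$ terms and $T_n := \rr_1$. Both $D_n$ and $T_n$ tend to zero as $n \to \infty$, so the content of \eqref{EQ:determinism} is the statement that $D_n/T_n \to 0$.

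To accomplish this I would use the explicit description of $d_l$ provided by the main result of \cite{polakova2023symbolic} (and incorporated into Theorem~\ref{T:formulas}): for a primitive aperiodic binary uniform substitution of length $q$, the support $L := \{l : d_l > 0\}$ is sparse, with consecutive elements growing at least geometrically (ratio $\geq q$), and the densities $d_l$ on $L$ admit controlled upper and lower asymptotic estimates in $l$. Granted these structural facts, for all sufficiently large $n$ the window $[n+1, n+\ell-1]$ meets $L$ in at most one point $l_*$, and $D_n \leq (\ell-1)\, d_{l_*}$ is of strictly smaller order than the single term $(l_{**} - n)\, d_{l_{**}}$ of $T_n$ arising from the next support point $l_{**} \geq q\, l_*$. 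A short comparison then yields $D_n/T_n \to 0$. The main technical obstacle is precisely extracting these sparsity and decay estimates from \cite{polakova2023symbolic}; once they are in hand, the theorem follows from the one-line computation above.
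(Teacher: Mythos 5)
Your treatment of the degenerate cases and your reformulation $\deter_\ell(y^{(m)},\infty,2^{-h})=1-D_n/T_n$ via Theorem~\ref{T:formulas-via-infinite-sums} match the paper's starting point, but the decisive step --- showing $D_n/T_n\to 0$ --- is exactly what you leave open, and the structural fact you propose to use for it is misstated. The support $L=\{l:\dens(\kkk_l)>0\}$ does \emph{not} have consecutive elements with ratio at least $q$: by Theorem~\ref{THM:density} the support points at scale $q^k$ are the numbers $q^k\ell_0+c(q^k-1)$ for the finitely many admissible $\ell_0<\recog$, so for Thue--Morse the consecutive support points $2^k$ and $3\cdot 2^{k-1}$ have ratio $3/2<q=2$ (similarly in the other examples of Section~\ref{S:examples}). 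Hence your inequality $l_{**}\ge q\,l_*$ is unjustified. What is true, and what your argument actually needs, is weaker: consecutive support points near $l$ differ by at least $q^k\ge (l+c)/(\recog-1+c)$, so the gaps grow linearly in $l$ (this gives your ``at most one support point in the window'' claim), the next support point satisfies $l_{**}\le q\,l_*+\alpha+\beta$, and $\dens(\kkk_l)\asymp l^{-2}$ on the support by Lemma~\ref{LMM:ohranicenie-zhora}. With these corrected inputs one gets $D_n/T_n\le (\ell-1)\,\tilde c\,(q+1)^2\recog/(\tilde c_0\, l_*)\to 0$, so your route is repairable; but as written the key estimate is both missing (you flag it as the ``main technical obstacle'') and predicated on a false premise.

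For comparison, the paper avoids the sparsity-window analysis altogether: it writes $\deter_\ell=\prod_{k=1}^{\ell-1}\rr_{k+1}/\rr_k$ and shows in Lemma~\ref{LMM:RR/RR-to-infty} that each factor tends to $1$. Since $\rr_k-\rr_{k+1}$ is the \emph{single} term $k\,\dens(\kkk_{k+m+h-2})$, it suffices to compare that one term with the one denominator term coming from $l_1=q\,l_0+\alpha+\beta$, for which Theorem~\ref{THM:density} gives the exact relation $\dens(\kkk_{l_1})=q^{-2}\dens(\kkk_{l_0})$; this yields the clean bound $\ell q^2/h$ without any two-sided density estimates or gap asymptotics. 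You can either adopt this telescoping device or complete your direct estimate along the corrected lines above; in the non-primitive and primitive-periodic cases your argument agrees with the paper's (Theorem~\ref{THM:nonPrimitive}, respectively periodicity of every $y\in X_\zeta$).
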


\medskip
The paper is organized as follows. In Section~\ref{S:prelim} we recall notions and results which are used in the remaining part of the paper. Particularly, in Subsection~\ref{SUBS:rqa-and-corsum} we correct a flaw from \cite[Proposition~1]{grendar2013strong}. 
In Section~\ref{S:primit} we deal with primitive substitutions and we prove Theorem~\ref{T:formulas-via-infinite-sums}. Examples of explicit formulas
for recurrence characteristics are given in Section~\ref{S:examples}. 
Non-primitive substitutions are described in Section~\ref{S:nonPrimit}, where
we also give a proof of Theorem~\ref{THM:nonPrimitive}. In the final Section~\ref{S:limOfDet}
we prove Theorem~\ref{T:determinism}.

\section{Preliminaries}\label{S:prelim}

The set of non-negative (positive) integers is denoted by $\NNN_0$ ($\NNN$).
The cardinality of a set $B$ is denoted by $\card B$.
We adopt the conventions that $[a, b] = \emptyset$ for $a > b$ and $[a, a] = \{a\}$.
When no confusion can arise, the set of consecutive
integers $\{ a, a+1, \dots, b-1 \}$ for any $a < b$ from $\NNN_0$ is denoted by $[a, b)$
or $[a,b-1]$;
if $a \geq b$ then $[a, b) = [a,b-1] = \emptyset$. 
For $B \subseteq \RRR^{k}$ and $c, d \in \RRR$ put $cB+d = \{ cb+d\colon b \in B \}$.
We adopt the convention that $0\log 0=0$.

Let $k \in \NNN$ and $M \subseteq \NNN_0^k$.
Then the \emph{upper} and \emph{lower (asymptotic) densities} of $M$ are given by
$$ 
	\bar \dens (M) = \limsup\limits_{n \to \infty} \frac{1}{n^k} \card \big(M \cap [0, n)^k\big) 
	\quad\text{and}\quad 
	\ubar{\dens}(M) = \liminf\limits_{n \to \infty} \frac{1}{n^k} \card \big(M \cap [0, n)^k \big).
$$
If $\ubar{\dens}(M) = \bar \dens (M)$ we say that the \emph{(asymptotic) density} $\dens(M)$ of $M$
exists and it is equal to this common value.
If densities of disjoint sets $M,N \subseteq \NNN_0^k$ exist, then
\begin{equation}\label{EQ:density-props}
	\dens(M\sqcup N)=\dens(M)+\dens(N)
	\quad\text{and}\quad
	\dens(aM+b)=a^{-k} \dens(M)
\end{equation}
for every integers $a>0$ and $b$. 
Further, for any $M \subseteq \mathbb{N}_0^k$ and $N \subseteq \mathbb{N}_0^l$, $\dens(M \times N) = \dens(M)\dens(N)$ provided $\dens(M)$ and $\dens(N)$ exist.
Recall also that the \emph{upper Banach density} 
$\dens^*(M)$ of a set $M\subseteq\NNN_0$ is defined as
\begin{equation*}
	\dens^*(M) = \limsup_{m<n,\, n-m\to\infty} \frac{1}{n-m}\card \big(M\cap [m,n)\big).
\end{equation*}

\subsection{The metric space of symbolic sequences}
An \emph{alphabet} $\AAa$ is a nonempty finite set; elements of $\AAa$ are called \emph{letters}.
The set of all words over $\AAa$ is
\begin{equation*}
	\AAa^{*}  = \bigcup_{\ell \ge 0} \AAa^\ell.
\end{equation*}
The set $\AAa^*$ endowed with concatenation is a free monoid.
Any member of $\AAa^\ell$ is called a \emph{word of length $\ell$}, or an \emph{$\ell$-word}, and is denoted by $w=w_0 w_1 \dots w_{\ell-1}$; here $w_i$ is the \emph{$i$-th letter} of $w$.
Length of a word $w$ will be denoted by $\abs{w}$.
The unique word of length $0$ is called the \emph{empty word} and is denoted by $o$.
A \emph{prefix} (\emph{suffix}) of a word $w=w_0 w_1 \dots w_{\ell-1}$ is any word
$w_{[0,k)}$ ($w_{[\ell-k,\ell)}$, respectively), where $0\le k\le \ell$; if $k<\ell$, the prefix (suffix) is
said to be \emph{proper}.

Put 
\begin{equation*}
	\Sigma=\AAa^{\NNN_0}.
\end{equation*}
Members $x = x_0 x_1 x_2 \dots $ of $\Sigma$ are called \emph{sequences}.
The \emph{subword of x of length $\ell\in\NNN_0$ starting at the index $i$}\footnote{We use the terms subword and index instead of factor and rank, respectively.} is an $\ell$-word $x_i x_{i+1} \dots x_{i+\ell-1}$ and will be denoted by $\word{x}{i}{i+\ell}$. The set of all subwords of $x$ is called the \emph{language of $x$} and
denoted by $\lang(x)$.

Metric $\rho$ on $\Sigma$ is defined for every $x, y \in \Sigma$ by $\rho(x, y) = 0$
if $x=y$ and 
\begin{equation}\label{EQ:rho-def}
	\rho(x, y) = 2^{-h}
	\qquad\text{if } x \neq y, 
	\quad\text{where } 
	h = \min \{ i \geq 0 \colon x_i \neq y_i \}.
\end{equation}
The pair $(\Sigma,\rho)$ is a compact metric space.
For every word $w\in\AAa^*$, the \emph{cylinder} $[w]$ is the clopen (that is, closed and open) set
$\{x\in\Sigma\colon x_{[0,\abs{w})}=w\}$.

A \emph{shift} is the map
$\sigma\colon \Sigma \to \Sigma $ defined by $\sigma(x_0 x_1 x_2 \ldots) = x_1 x_2 \ldots$.
For each nonempty closed $\sigma$-invariant subset $Y \subseteq \Sigma$,
the restriction of $(\Sigma, \sigma)$ to $Y$ is called a \emph{subshift}; 
to abbreviate, we will often say that $Y$ itself is a subshift.
Recall that $Y$ is a subshift if and only if there is  a set $\lang_Y\subseteq A^*$ (the so-called \emph{set of allowed words})
such that
\begin{equation*}
	y\in Y 
	\quad\text{if and only if}\quad
	\lang(y)\subseteq \lang_Y.
\end{equation*}
The closure of the orbit 
$\Orb_\sigma(x) = \{ \sigma^n(x) \colon n \geq 0 \}$ of any $x \in \Sigma$ defines a subshift,
as it is always a nonempty, closed and $\sigma$-invariant set; the language of it is equal to the language of $x$.

\subsection{Correlation sum}\label{S:corr-sum}
For $\ell\ge 1$, the Bowen metric $\rho_\ell$ on $\Sigma$ is given by
$$ 
	\rho_\ell (y,z) = \max \limits_{0 \leq i < \ell}
	\rho(\sigma^i(y), \sigma^i (z))
	\qquad
	\text{for every } y,z \in \Sigma.
$$

\begin{lemma}\label{LMM-emb-rho-ell-and-rho}\label{L:CorSum}
	Let $y,z \in \Sigma$, $\ell \in \NNN$, $h \in \NNN_0$. 
	\begin{enumerate}[label=(\alph*)]
		\item \label{LMM-rho-ell-and-rho-case-a}
		If $h \geq 1$, then $\rho_\ell(y,z) = 2^{-h}$ if and only if $\rho(y,z) = 2^{-h-\ell+1}$.
		\item \label{LMM-rho-ell-and-rho-case-b}
		If $h = 0$, then $\rho_\ell(y,z) = 2^{-0}$ if and only if $\rho(y,z) \geq 2^{-\ell+1}$.
	\end{enumerate}
\end{lemma}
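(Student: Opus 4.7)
The plan is to compute $\rho_\ell(y,z)$ by locating the first position at which $y$ and $z$ differ, and then deducing both parts from a single case analysis.

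First I would assume $y\neq z$ (the case $y=z$ being immediate since both $\rho$ and $\rho_\ell$ vanish) and let $p=\min\{j\ge 0:y_j\neq z_j\}$, so that $\rho(y,z)=2^{-p}$. The key observation is that for every $i\in[0,p]$, the shifted sequences $\sigma^i y$ and $\sigma^i z$ first disagree at position $p-i$: they agree on positions $0,1,\dots,p-i-1$ because $y,z$ agree on $i,i+1,\dots,p-1$, and they disagree at position $p-i$ because $y_p\neq z_p$. Consequently $\rho(\sigma^i y,\sigma^i z)=2^{-(p-i)}$ for $0\le i\le p$, and this quantity equals $1=2^{-0}$ precisely when $i=p$.

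Next I would evaluate $\rho_\ell(y,z)=\max_{0\le i<\ell}\rho(\sigma^i y,\sigma^i z)$ by splitting on whether $p\ge \ell-1$ or $p<\ell-1$. If $p\ge \ell-1$, then every index $i\in[0,\ell)$ lies in $[0,p]$, so $\rho(\sigma^i y,\sigma^i z)=2^{-(p-i)}$ for all such $i$, and the maximum is attained at $i=\ell-1$, giving $\rho_\ell(y,z)=2^{-(p-\ell+1)}$. If instead $p<\ell-1$, then $i=p$ belongs to $[0,\ell)$ and contributes $\rho(\sigma^p y,\sigma^p z)=1$, so $\rho_\ell(y,z)=1=2^0$; note that in this situation the subsequent indices $i>p$ cannot make the maximum any larger.

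Part (a) then follows: if $h\ge 1$ and $\rho_\ell(y,z)=2^{-h}<1$, the second case is excluded, so $\rho_\ell(y,z)=2^{-(p-\ell+1)}=2^{-h}$ forces $p=h+\ell-1$, i.e.\ $\rho(y,z)=2^{-h-\ell+1}$; conversely, $\rho(y,z)=2^{-h-\ell+1}$ with $h\ge 1$ gives $p=h+\ell-1\ge\ell-1$, placing us in the first case with $\rho_\ell(y,z)=2^{-h}$. Part (b) follows similarly: $\rho_\ell(y,z)=1$ occurs exactly in the second case $p<\ell-1$ or at the boundary $p=\ell-1$ of the first case (where $2^{-(p-\ell+1)}=1$), i.e.\ precisely when $p\le\ell-1$, which is the same as $\rho(y,z)\ge 2^{-\ell+1}$. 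Since the argument is a direct unwinding of the definitions, there is no real obstacle; the only point that requires care is handling the boundary $p=\ell-1$ correctly so that both parts meet consistently.
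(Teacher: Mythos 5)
Your proof is correct and is essentially the paper's argument: the paper likewise reduces both equivalences to the position of first disagreement, noting that (a) corresponds to $y_{[0,h+\ell-1)}=z_{[0,h+\ell-1)}$ with $y_{h+\ell-1}\neq z_{h+\ell-1}$ and (b) to $y_{[0,\ell)}\neq z_{[0,\ell)}$. Your write-up just spells out this unwinding in more detail via the index $p$ and the boundary case $p=\ell-1$, which the paper leaves implicit.
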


\begin{proof}
	The proof is trivial, since
	\ref{LMM-rho-ell-and-rho-case-a} corresponds to $y_{[0, h+\ell-1)} = z_{[0, h+\ell-1)}$ and
		$y_{h+\ell-1} \neq z_{h+\ell-1}$,
	and \ref{LMM-rho-ell-and-rho-case-b} corresponds to $y_{[0, \ell)} \neq z_{[0, \ell)}$.
\end{proof}

In connection with correlation dimension \cite{grassberger1983characterization,grassberger1983measuring}
and correlation entropy \cite{takens1983invariants}, 
the so-called correlation sums were introduced.
Recall that, for $x\in\Sigma$, $\eps >0 $, $n \in \NNN$ and $\ell \in \NNN$, the \emph{correlation sum}
$\corsum_\ell(x, n, \eps)$ is given by
$$ 
	\corsum_\ell(x, n, \eps) 
	= 
	\frac{1}{n^2}\card \{ (i, j)\colon
	0 \leq i,j < n,\ \rho_\ell(\sigma^i(x), \sigma^j(x)) \leq \eps \}.
$$
By \cite{pesin1993rigorous}, if $\mu$ is an ergodic measure which preserves the shift $\sigma$, then
\begin{equation}\label{EQ:corr-integral}
	\lim_{n\to\infty} \corsum_\ell(x, n, \eps)  
	= 
	\mu\times\mu \{
		(y,z)\in \Sigma\times\Sigma\colon \rho_\ell(y,z)\le\eps
	\}
\end{equation}
for $\mu$-almost every $x\in \Sigma$ and for every $\eps>0$.\footnote{Note that, in \cite{pesin1993rigorous}, \eqref{EQ:corr-integral} is proved for \emph{all but countably many} $\eps>0$ (namely, for all points of continuity of the nondecreasing map $\varphi(\eps)=\mu\times\mu \{
		(y,z)\in \Sigma\times\Sigma\colon \rho_\ell(y,z)\le\eps
	\}$). However, due to
the fact that the metric $\rho_\ell$ attains only isolated positive values $2^{-h}$ ($h\in\NNN_0$), 
we can slightly increase $\eps$ to a continuity point $\eps'$ of $\varphi$ without affecting either side of \eqref{EQ:corr-integral}.}
Moreover, if $(X,\sigma)$ is a \emph{uniquely ergodic} subshift (that is, it has a unique invariant measure),
then \eqref{EQ:corr-integral} holds even for \emph{every} $x\in X$ and every $\eps>0$
(see e.g.~\cite[Proposition~3]{spitalsky2018local}).

\subsection{Recurrence plot} \label{SUBS:RecPlot}
A recurrence plot \cite{eckmann1987recurrence} visualizes trajectory of a dynamical system.
For a sequence $x\in\Sigma$, $n \in \NNN \cup \{ \infty \}$, $n \geq 2$ and $\eps >0$, the \emph{recurrence plot} $\recplot(x,n, \eps)$ is a square $n\times n$ matrix
such that, for $0 \leq i, j < n$,
$$ \recplot(x, n, \eps)_{i,j} = 
\begin{cases}
	1 &\text{  if } \rho(\sigma^i(x), \sigma^j(x)) \leq \eps, \\
	0 &\text{  otherwise}.
\end{cases}
$$
Every pair $(i,j)$ with $\recplot(x, n, \eps)_{i,j}=1$ is called a \emph{recurrence}.

Let $\recplot(x, n, \eps)$ be a recurrence plot and $\ell\ge 1$ be an integer.
A \emph{(diagonal) line of length $\ell$} (or, shortly, an \emph{$\ell$-line}) in $\recplot(x, n,\eps)$ is a triple $(i,j,\ell)$ of integers, where
\begin{itemize}
	\item $0 \leq i,j \leq n-\ell$ and $i\ne j$;
	\item $\recplot(x, n, \eps)_{i+h, j+h}  = 1$ for every $0 \leq h < \ell$;
	\item if $\min\{i,j\} > 0$, then $\recplot(x, n, \eps)_{i-1, j-1}  = 0$;
	\item if $\max\{i,j\} < n-\ell$, then $\recplot(x, n, \eps)_{i+\ell, j+\ell}  = 0$.
\end{itemize}
The pair $(i,j)$ is called the \emph{starting point}
and $\ell$ is called the \emph{length} of the line $(i,j,\ell)$.
If $\min\{i, j\} = 0$ we say that the line is \emph{$0$-boundary}.
Similarly,  if $\max\{i,j\} = n -\ell$, 
we say that the line is \emph{$n$-boundary}
(notice that if $n = \infty$ then no line is $n$-boundary; further, for $n$ finite,
a line can be both $0$-boundary and $n$-boundary).
Lines in $\recplot(x, n, \eps)$, which are neither $0$-boundary nor $n$-boundary,
are called \emph{inner lines}. 

In $\recplot(x, \infty, \eps)$ we analogously define also \emph{(diagonal) lines of length $\ell=\infty$}.
However, such lines occur only for eventually $\sigma$-periodic $x$, as is shown e.g.~in 
\cite[Proposition~2.1]{polakova2023symbolic}. 
Recall that 
$x$ is \emph{$\sigma$-periodic} if there is $p\in\NNN$ such that $x=(x_{[0,p)})^\infty$; the 
smallest such $p$ is called the \emph{period} of $x$.
Further, $x$ is \emph{eventually $\sigma$-periodic} if there is $h\in\NNN$ such that $\sigma^h(x)$ is $\sigma$-periodic.

\subsection{Recurrence characteristics} \label{SUBS:RQA}
Fix \emph{finite} $n\ge 2$ and $\ell \geq 1$. Let $\numberofdiag_{\ell}(x, n, \eps)$ denote the number of (possibly $0$-boundary or $n$-boundary) lines of length exactly $\ell$ in the recurrence plot $\recplot (x, n, \eps)$.
Denote the relative frequency of the starting points of lines of length exactly $\ell$ and those of length at least $\ell$ by
$$
	\linedens_\ell (x, n, \eps) = \dfrac{1}{n^2 - n} \numberofdiag_{\ell}(x, n, \eps)
	\qquad\text{and}\qquad
	\lineDens_\ell (x, n, \eps) = \sum_{l\ge\ell} \linedens_l (x, n, \eps).
$$
\emph{Recurrence rate}, \emph{determinism}, \emph{average line length} and \emph{entropy of line lengths} are given by
(see e.g.~\cite[Sec.~1.3.1]{marwan2014mathematical}; to abbreviate, on the right-hand sides we omit the arguments $x,n,\eps$)
\begin{eqnarray*}
	\rr_\ell  (x, n, \eps) &=& \sum_{l \geq \ell} l \linedens_l,
\\
	\deter_\ell(x, n, \eps)  &=& \frac{\rr_\ell}{\rr_1}\,,
\\
	\lavg_\ell(x, n, \eps)  &=& \frac{\rr_\ell}{\lineDens_\ell}\,,
\\	
	\ent_\ell(x, n, \eps)  &=& - \sum_{l\ge\ell} 
		\frac{\linedens_l}{\lineDens_\ell}
		\log \frac{\linedens_l}{\lineDens_\ell}
\ 
	= \ 
	\log \lineDens_\ell
	- \frac{1}{\lineDens_\ell}	
	\sum_{l\ge\ell} 
			\linedens_l		\log \linedens_l\,.
\end{eqnarray*}
(In the case when a denominator is zero, we leave the corresponding quantity undefined. Further,
in the final formula the convention $0\log 0=0$ is used.)
If limits for $n \to \infty$ exist, we denote them by $\linedens_{\ell} (x, \infty, \eps)$, $\rr_{\ell} (x, \infty, \eps)$, etc.

\subsection{Recurrence characteristics and correlation sum}\label{SUBS:rqa-and-corsum}
By \cite[(10) and Proposition~1]{grendar2013strong},
\begin{equation}\label{EQ:rr-and-corr-gms}
\begin{split}
	\corsum_\ell(x,n,\eps) &= \frac{1}{n^2}
	\Big[
		\sum_{l\ge\ell} (l-\ell+1)\numberofdiag_{l}(x,n,\eps) + n
	\Big],
\\
	\rr_\ell  (x, n, \eps) &= \frac{n}{n-1} 
	\Big[
		\ell\corsum_\ell(x,n,\eps) - (\ell-1)\corsum_{\ell+1}(x,n,\eps)
	\Big]
	 - \frac{1}{n-1}
\end{split}
\end{equation}
for every $\ell\in\NNN$, $n\ge 2$ and $\eps>0$\footnote{The differences between these formulas and those from \cite{grendar2013strong} are due to the fact that, in \cite{grendar2013strong}, the main diagonal of recurrence plot is not excluded from the analysis, but instead it is counted as a line of length $n$.}.
However, this formula assumes that $n$-boundary lines are excluded from the definition of recurrence characteristics and correlation sum, which is illustrated in the following example.

\begin{example}\label{EX:rr-and-corsum}
	Let $x=010\,111\,010\dots$ be a unique fixed point of the substitution $\zeta\colon A\to A^*$
	given by $\zeta(0)=010$ and $\zeta(1)=111$ (see Subsection~\ref{SS:subst}
	for the corresponding definitions). 
	Take $n=6$ and $\eps=1/2$.
	Then, in the upper part of $\recplot(x,n,\eps)$, there are two lines of length at least $2$: one $0$-boundary line $(0,2,2)$ and one $n$-boundary line $(3,4,2)$;
	thus $\numberofdiag_{2}=4$, $\numberofdiag_{\ell}=0$ for every $\ell>2$, and
	$\rr_{2} = (2\cdot 4)/(6^2-6)$.
	
	On the other hand, for any $0\le i < j < n$, $\rho(\sigma^i(x), \sigma^j(x))\le 1/4$
	if and only if $(i,j)\in\{(0,2),(1,5), (3,4)\}$, and $\rho(\sigma^i(x), \sigma^j(x))\le 1/8$
	if and only if $(i,j)=(1,5)$. Hence, by Lemma~\ref{L:CorSum},
	$\corsum_{2} = ({2\cdot 3+6})/{6^2}$ and $\corsum_{3} = ({2\cdot 1+6})/{6^2}$.
	Consequently, neither of the equations \eqref{EQ:rr-and-corr-gms} is true for $\ell=2$.
	However, if we exclude $n$-boundary lines from the recurrence plot, then 
	$\rr_{2}=({2\cdot 2+3\cdot 0})/({6^2-6})$, $\corsum_{2} = ({2\cdot 1+6})/{6^2}$
	and $\corsum_{3} = ({2\cdot 0+6})/{6^2}$.
	In this case, both formulas \eqref{EQ:rr-and-corr-gms} are true for $\ell=2$.
\end{example}

In the following propositions we give correct versions of \eqref{EQ:rr-and-corr-gms} and formula (11) from \cite{grendar2013strong}.
Notice that Theorems~4 and 5 from \cite{grendar2013strong} are not affected by this flaw and remain true as stated in \cite{grendar2013strong}. Since these results are true for every dynamical system, till the end of this subsection
we assume that $X=(X,\rho)$ is any metric space and $f\colon X\to X$ is any continuous map,
and that correlation sum, recurrence plot and recurrence characteristics are defined as in Subsections~\ref{S:corr-sum}--\ref{SUBS:RQA}, with $(\Sigma,\sigma)$ replaced by $(X,f)$.

\begin{proposition}\label{PROP:corsum-and-numberofdiag}
Let $x\in X$, $\ell\in \NNN$, $n\ge 2$ and $\eps>0$. Then
\begin{eqnarray*}
	\corsum_\ell(x,n,\eps) &=& \frac{1}{n^2}
	\Big[
		\sum_{l\ge\ell} (l-\ell+1)\numberofdiag_{l}(x,n,\eps)
		\ + n + \triangle_\ell
	\Big]\,,
\end{eqnarray*}
where 
\begin{equation*}
	0\le \triangle_\ell\le 2(\ell-1)(n-1)\,.
\end{equation*}
\end{proposition}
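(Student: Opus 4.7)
The plan is to count the pairs defining $\corsum_\ell$ directly and match the off-diagonal contributions with diagonal lines of $\recplot(x,n,\eps)$. Let
$$S=\{(i,j)\in[0,n)^2\colon \rho_\ell(f^i(x),f^j(x))\le\eps\},$$
so that $n^2\corsum_\ell(x,n,\eps)=|S|$. I would partition $S$ into three pairwise disjoint pieces: the full diagonal $D=\{(i,i)\colon 0\le i<n\}\subseteq S$ (of cardinality $n$), the ``interior'' off-diagonal part
$$S_A=\{(i,j)\in S\colon i\ne j,\ \max\{i,j\}\le n-\ell\},$$
and the ``boundary'' off-diagonal part
$$S_B=\{(i,j)\in S\colon i\ne j,\ \max\{i,j\}>n-\ell\}.$$
Setting $\triangle_\ell:=|S_B|$ gives $\triangle_\ell\ge 0$ at once, and the proposition reduces to identifying $|S_A|$ with the weighted sum of line counts.

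The heart of the argument is the bijective identity
$$|S_A|=\sum_{l\ge\ell}(l-\ell+1)\,\numberofdiag_l(x,n,\eps).$$
Unwinding the definition of the Bowen metric, $\rho_\ell(f^i(x),f^j(x))\le\eps$ is equivalent to the $\ell$ positions $(i+k,j+k)$, $0\le k<\ell$, all being recurrences, and the extra size condition $\max\{i,j\}\le n-\ell$ ensures this $\ell$-block lies inside $[0,n)^2$. Hence each $(i,j)\in S_A$ extends uniquely to a maximal off-diagonal run in $\recplot(x,n,\eps)$, that is, to a diagonal line $(i',j',l)$ with $l\ge\ell$ containing $(i,j)$. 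Conversely, any such line satisfies $\max\{i',j'\}+l\le n$ and therefore supplies exactly $l-\ell+1$ admissible shifted starting points $(i'+k,j'+k)$, $0\le k\le l-\ell$, all of which sit in $S_A$. The subtle point, illustrated by Example~\ref{EX:rr-and-corsum}, is that $n$-boundary lines \emph{do} contribute on equal footing: the inequality $\max\{i',j'\}+l\le n$ holds for \emph{every} line of $\recplot(x,n,\eps)$, so each of its $\ell$-sub-blocks automatically falls in $S_A$ rather than in the error term $S_B$.

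Combining $|S|=|D|+|S_A|+|S_B|$ with the above yields the displayed formula. For the upper bound I would use the crude containment
$$\triangle_\ell\le\big|\{(i,j)\in[0,n)^2\colon i\ne j,\ \max\{i,j\}>n-\ell\}\big|.$$
For $\ell=1$ this set is empty; for $2\le\ell\le n$, a quick inclusion–exclusion gives cardinality $(\ell-1)(2n-\ell)\le 2(\ell-1)(n-1)$ (using $\ell\ge 2$); and for the trivial regime $\ell>n$ the whole off-diagonal has cardinality $n(n-1)\le 2(\ell-1)(n-1)$. The main obstacle is really only the bookkeeping: one must carefully separate the honest diagonal lines of the finite recurrence plot from the $\ell$-blocks of recurrences that protrude past the upper-right edge, and introducing $S_B$ is precisely the device that absorbs the latter.
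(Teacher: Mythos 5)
Your proof is correct, and its core is the same counting as the paper's: off-diagonal pairs $(i,j)$ with $\rho_\ell(f^i(x),f^j(x))\le\eps$ are matched to positions along diagonal lines, using the key observation that every line $(i',j',l)$ of $\recplot(x,n,\eps)$ satisfies $\max\{i',j'\}+l\le n$, so each line of length $l\ge\ell$ --- $n$-boundary or not --- carries exactly $l-\ell+1$ such pairs in the region $\max\{i,j\}\le n-\ell$. Where you differ is the bookkeeping of the error term. The paper sums, line by line, the quantities $m_{ijl}=\card\{k\in[0,l)\colon\rho_\ell(\sigma^{i+k}(x),\sigma^{j+k}(x))\le\eps\}$, notes that $m_{ijl}=\max\{l-\ell+1,0\}$ unless the line is $n$-boundary, and bounds the total excess by the number of $n$-boundary lines (at most $2(n-1)$) times the per-line excess (at most $\ell-1$). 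You instead take $\triangle_\ell$ to be the number of Bowen-recurrent pairs in the strip $\max\{i,j\}>n-\ell$ and bound it by the cardinality of the whole strip, $(\ell-1)(2n-\ell)$ for $2\le\ell\le n$, with the cases $\ell=1$ and $\ell>n$ checked separately. Since $\triangle_\ell$ is determined by the identity, both definitions give the same number and the same final bound $2(\ell-1)(n-1)$; your strip count is slightly more self-contained (it needs no bound on how many $n$-boundary lines there are, only a lattice-point count), at the cost of a small case analysis, whereas the paper's version localizes the excess on the $n$-boundary lines themselves, the viewpoint that is reused later when boundary lines are counted in the proof of Proposition~\ref{PROP:RR-and-densK}.
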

\begin{proof}
To abbreviate, we omit the arguments $x$, $n$ and $\eps$. 
Realize that, for any different integers $i,j\in[0,n)$,
$\rho_\ell(\sigma^i(x),\sigma^j(x))\le\eps$ implies
$\recplot_{ij}=1$ and so $(i,j)$ is contained in a line in $\recplot$.
Thus,
\begin{equation*}
	\corsum_\ell 
	=
	\frac{1}{n^2}
	\Big[
		n + \sum_{(i,j,l)\colon\text{line}} m_{i j l} 
	\Big],	 
\end{equation*}
where, for any line $(i,j,l)$ in $\recplot$,
\begin{equation*}
	m_{i j l}= \card\{
		k\in[0,l)\colon  \rho_\ell(\sigma^{i+k}(x),\sigma^{j+k}(x))\le\eps
	\}.
\end{equation*}
From the definitions of a line in a recurrence plot and of the metric $\rho_\ell$ 
we immediately have that 
\begin{itemize}
\item $m_{ijl}=\max\{l-\ell+1,0\}$ if the line $(i,j,l)$ is not $n$-boundary;
\item $\max\{l-\ell+1,0\}\le m_{ijl}\le l$ if the line $(i,j,l)$ is $n$-boundary.
\end{itemize}
Thus,
\begin{equation*}
	\corsum_\ell 
	=
	\frac{1}{n^2}
	\Big[
		n + \triangle_\ell + \sum_{l\ge\ell} (l-\ell+1)\numberofdiag_{l}(x,n,\eps)
	\Big],	 
\end{equation*}
where 
\begin{equation*}
	\triangle_\ell
	= 
	\sum_{(i,j,l)\colon n\text{-boundary}} \big(m_{i j l}-\max\{l-\ell+1,0\} \big).
\end{equation*}
Since the number of $n$-boundary lines is at most $2(n-1)$ and the difference of $m_{i j l}$ and $\max\{l-\ell+1,0\}$ belongs to $[0,\ell)$, we have that $0\le \triangle_\ell \le 2(\ell-1)(n-1)$.
\end{proof}

\begin{proposition}[Recurrence rate]\label{PROP:rr-and-corsum}
Let $x\in X$, $\ell\in \NNN$, $n\ge 2$ and $\eps>0$. Then
\begin{equation*}
	\rr_\ell  (x, n, \eps) 
	= \frac{n}{n-1} 
	\Big[
		\ell\corsum_\ell(x,n,\eps) - (\ell-1)\corsum_{\ell+1}(x,n,\eps) 
	\Big]
	- \frac{1}{n-1}
	+ \delta_\ell^\rr,
\end{equation*}
where 
\begin{equation*}
	\abs{\delta_\ell^\rr} \le \frac{2\ell(\ell-1)}{n}\,.
\end{equation*}
\end{proposition}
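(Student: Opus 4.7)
The plan is to feed the formula from Proposition~\ref{PROP:corsum-and-numberofdiag} into the combination $\ell\corsum_\ell - (\ell-1)\corsum_{\ell+1}$, cash in on an algebraic identity that collapses the coefficients to $l$, and then match the result against the definition $\rr_\ell = \frac{1}{n^2-n}\sum_{l\ge\ell} l\,\numberofdiag_l$.

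First, apply Proposition~\ref{PROP:corsum-and-numberofdiag} to write
\[
\ell\corsum_\ell - (\ell-1)\corsum_{\ell+1}
= \frac{1}{n^2}\Bigl[\,\ell\!\!\sum_{l\ge\ell}(l-\ell+1)\numberofdiag_l - (\ell-1)\!\!\sum_{l\ge\ell+1}(l-\ell)\numberofdiag_l + n + E\Bigr],
\]
where $E = \ell\triangle_\ell - (\ell-1)\triangle_{\ell+1}$ and I have used the cancellation $\ell\cdot n-(\ell-1)\cdot n = n$. The key observation is the identity
\[
\ell(l-\ell+1) - (\ell-1)(l-\ell) = l \qquad (l\ge\ell),
\]
which also handles the boundary case $l=\ell$ (where the second sum contributes $0$ and the first sum contributes $\ell\cdot 1\cdot\numberofdiag_\ell = l\,\numberofdiag_\ell$). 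Consequently both sums consolidate into $\sum_{l\ge\ell} l\,\numberofdiag_l$.

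Next, multiply through by $\tfrac{n}{n-1}$ and recognize
\[
\frac{1}{n(n-1)}\sum_{l\ge\ell} l\,\numberofdiag_l = \frac{1}{n^2-n}\sum_{l\ge\ell} l\,\numberofdiag_l = \rr_\ell.
\]
This gives
\[
\frac{n}{n-1}\bigl[\ell\corsum_\ell - (\ell-1)\corsum_{\ell+1}\bigr]
= \rr_\ell + \frac{1}{n-1} + \frac{E}{n(n-1)},
\]
and rearranging identifies $\delta_\ell^\rr = -E/(n(n-1))$.

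For the bound on $\delta_\ell^\rr$, note that Proposition~\ref{PROP:corsum-and-numberofdiag} gives $\triangle_\ell,\triangle_{\ell+1}\ge 0$, with $\triangle_\ell \le 2(\ell-1)(n-1)$ and $\triangle_{\ell+1}\le 2\ell(n-1)$. Since both summands in $E$ are nonnegative, $|E|\le \max\{\ell\triangle_\ell,(\ell-1)\triangle_{\ell+1}\}\le 2\ell(\ell-1)(n-1)$, which yields $|\delta_\ell^\rr|\le 2\ell(\ell-1)/n$ after dividing by $n(n-1)$. The only subtle point — and the one worth flagging — is exactly this use of $|a-b|\le\max(a,b)$ for $a,b\ge 0$ instead of $a+b$; the naive triangle inequality would give the looser bound $4\ell(\ell-1)/n$.
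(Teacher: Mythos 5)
Your proof is correct and follows essentially the same route as the paper: substitute the expansion from Proposition~\ref{PROP:corsum-and-numberofdiag} into $\ell\corsum_\ell-(\ell-1)\corsum_{\ell+1}$, use the identity $\ell(l-\ell+1)-(\ell-1)(l-\ell)=l$ to recover $(n^2-n)\rr_\ell$, and identify $\delta_\ell^\rr$ with the error term $-\big(\ell\triangle_\ell-(\ell-1)\triangle_{\ell+1}\big)/(n^2-n)$. Your explicit remark that the bound needs $\abs{a-b}\le\max\{a,b\}$ for $a,b\ge 0$ (rather than the triangle inequality, which would only give $4\ell(\ell-1)/n$) is exactly the step the paper leaves implicit, so nothing is missing.
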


\begin{proof}
As above, we omit the arguments $x$, $n$ and $\eps$. By Proposition~\ref{PROP:corsum-and-numberofdiag},
\begin{equation*}
\begin{split}
	n^2\big(\ell\corsum_\ell - (\ell-1)\corsum_{\ell+1}\big)
	&=
	\sum_{l\ge\ell} [\ell(l-\ell+1)-(\ell-1)(l-\ell)] \numberofdiag_{l}
	\ + n + \triangle_\ell'
\\
	&=
	(n^2-n)\rr_{\ell}
	+ n + \triangle_\ell'
	\,,
\end{split}
\end{equation*}
where $\triangle_\ell'=\ell\triangle_\ell - (\ell-1)\triangle_{\ell+1}$.
Since $0\le \triangle_\ell \le 2(\ell-1)(n-1)$ and $0\le \triangle_{\ell+1} \le 2\ell(n-1)$,
we obtain
\begin{equation*}
	\abs{\delta_\ell^\rr}
	=
	\frac{\abs{\triangle_\ell'}}{n^2-n} 
	\le 
	\frac{2\ell(\ell-1)}{n}\,.
\end{equation*}
\end{proof}

\begin{proposition}[Average line length]\label{PROP:lavg-and-corsum}
Let $x\in X$, $\ell\in \NNN$, $n\ge 2$ and $\eps>0$. Then
\begin{equation*}
	\lineDens_\ell  (x, n, \eps) = \frac{n}{n-1} 
	\Big[
		\corsum_\ell(x,n,\eps) - \corsum_{\ell+1}(x,n,\eps) 
	\Big]
	+ \delta_\ell^\numberofdiag,
\end{equation*}
where 
\begin{equation*}
	\abs{\delta_\ell^\numberofdiag} \le \frac{2\ell}{n}\,.
\end{equation*}
Consequently, if $\lineDens_\ell(x,n,\eps)>0$ 
(that is, if there is a line of length at least $\ell$ in
the recurrence plot $\recplot(x,n,\eps)$), then
\begin{equation*}
	\lavg_\ell  (x, n, \eps) = 
	\frac
	{
		\frac{n}{n-1} 
		\Big[
			\ell\corsum_\ell - (\ell-1)\corsum_{\ell+1}
		\Big]
		-\frac{1}{n-1}+ \delta_\ell^\rr
	}
	{
		\frac{n}{n-1} 
		\Big[
			\corsum_\ell - \corsum_{\ell+1}
		\Big]
		+ \delta_\ell^\numberofdiag
	}
	\,.
\end{equation*}
\end{proposition}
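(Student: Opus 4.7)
The plan is to derive the first identity for $\lineDens_\ell$ as a direct consequence of Proposition~\ref{PROP:corsum-and-numberofdiag}, by applying that proposition at both $\ell$ and $\ell+1$ and taking the difference; the claimed formula for $\lavg_\ell$ will then be immediate upon using $\lavg_\ell=\rr_\ell/\lineDens_\ell$ together with Proposition~\ref{PROP:rr-and-corsum}.

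For the main computation, I would write, using Proposition~\ref{PROP:corsum-and-numberofdiag} at both indices,
\[
	n^2\bigl(\corsum_\ell - \corsum_{\ell+1}\bigr)
	= \sum_{l\ge\ell}(l-\ell+1)\numberofdiag_{l} - \sum_{l\ge\ell+1}(l-\ell)\numberofdiag_{l} + \triangle_\ell - \triangle_{\ell+1},
\]
where the two constant $n$'s cancel. The coefficient of $\numberofdiag_l$ on the right equals $1$ for every $l\ge\ell$ (for $l=\ell$ the second sum contributes $0$, and for $l\ge\ell+1$ one has $(l-\ell+1)-(l-\ell)=1$). Since by definition $\sum_{l\ge\ell}\numberofdiag_{l}(x,n,\eps)=(n^2-n)\lineDens_\ell(x,n,\eps)$, we obtain
\[
	n^2\bigl(\corsum_\ell - \corsum_{\ell+1}\bigr)
	= (n^2-n)\lineDens_\ell + \triangle_\ell - \triangle_{\ell+1},
\]
which rearranges to the claimed identity with $\delta_\ell^\numberofdiag=(\triangle_{\ell+1}-\triangle_\ell)/(n^2-n)$.

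For the error bound, I would combine $0\le\triangle_\ell\le 2(\ell-1)(n-1)$ and $0\le\triangle_{\ell+1}\le 2\ell(n-1)$ from Proposition~\ref{PROP:corsum-and-numberofdiag} to conclude
\[
	\abs{\triangle_{\ell+1}-\triangle_\ell}\le 2\ell(n-1),
\]
so that $\abs{\delta_\ell^\numberofdiag}\le 2\ell/n$. Finally, whenever $\lineDens_\ell(x,n,\eps)>0$ the ratio $\lavg_\ell=\rr_\ell/\lineDens_\ell$ is well defined, and the displayed expression for $\lavg_\ell$ follows by substituting the formula of Proposition~\ref{PROP:rr-and-corsum} into the numerator and the first assertion of the present proposition into the denominator.

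There is no substantive obstacle; the argument is routine bookkeeping once Proposition~\ref{PROP:corsum-and-numberofdiag} is available. The only point requiring care is matching the signs when taking the difference and observing that the asymmetric bounds on $\triangle_\ell$ and $\triangle_{\ell+1}$ nevertheless lead to the symmetric absolute-value bound $2\ell(n-1)$, since the worst case is $\triangle_\ell=0$, $\triangle_{\ell+1}=2\ell(n-1)$.
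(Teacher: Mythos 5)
Your proof is correct and follows essentially the same route as the paper: differencing Proposition~\ref{PROP:corsum-and-numberofdiag} at $\ell$ and $\ell+1$ so the coefficients telescope to $\sum_{l\ge\ell}\numberofdiag_l=(n^2-n)\lineDens_\ell$, bounding $\abs{\triangle_{\ell+1}-\triangle_\ell}\le 2\ell(n-1)$, and then substituting into $\lavg_\ell=\rr_\ell/\lineDens_\ell$ via Proposition~\ref{PROP:rr-and-corsum}. No issues.
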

\begin{proof}
By Proposition~\ref{PROP:corsum-and-numberofdiag},
\begin{equation*}
	n^2\big(\corsum_\ell - \corsum_{\ell+1}\big)
	=
	(n^2-n)\lineDens_{\ell}
	+ \triangle_\ell'
	\,,
\end{equation*}
where $\triangle_\ell'=\triangle_\ell - \triangle_{\ell+1}$.
Hence
\begin{equation*}
	\abs{\delta_\ell^\numberofdiag}
	=
	\frac{\abs{\triangle_\ell'}}{n^2-n} 
	\le 
	\frac{2\ell}{n}\,.
\end{equation*}
The rest follows from the definition of $\lavg_\ell$.
\end{proof}

We will also need the following formula which is in a sense ``inverse'' to
those from Propositions~\ref{PROP:rr-and-corsum} and \ref{PROP:lavg-and-corsum}.

\begin{proposition}[Correlation sum]\label{PROP:corsum-and-rr}
Let $x\in X$, $\ell\in \NNN$, $n\ge 2$ and $\eps>0$. Then
\begin{equation*}
	\corsum_\ell(x,n,\eps)
	=
	\frac{n-1}{n} 
	\Big[
		\rr_{\ell}(x,n,\eps) 
		 - (\ell-1)\lineDens_{\ell}(x,n,\eps) 
	\Big]
	+ \delta_\ell^\corsum,
\end{equation*}
where 
\begin{equation*}
	\frac{1}{n} \le \delta_\ell^\corsum < \frac{2\ell}{n}\,.
\end{equation*}
\end{proposition}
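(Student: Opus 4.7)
The plan is to derive the stated identity by a direct algebraic manipulation from Proposition~\ref{PROP:corsum-and-numberofdiag}, which already expresses $\corsum_\ell$ in terms of the line-count quantities $\numberofdiag_l$ and the error term $\triangle_\ell$.

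First I would rewrite the quantity $\rr_\ell - (\ell-1)\lineDens_\ell$ in terms of the $\numberofdiag_l$'s. By the very definitions of $\rr_\ell$ and $\lineDens_\ell$,
\begin{equation*}
	(n^2-n)\bigl[\rr_\ell(x,n,\eps) - (\ell-1)\lineDens_\ell(x,n,\eps)\bigr]
	= \sum_{l\ge\ell} (l-\ell+1)\numberofdiag_l(x,n,\eps),
\end{equation*}
so after multiplying by $\tfrac{n-1}{n}$ one obtains
\begin{equation*}
	\frac{n-1}{n}\bigl[\rr_\ell - (\ell-1)\lineDens_\ell\bigr]
	= \frac{1}{n^2}\sum_{l\ge\ell}(l-\ell+1)\numberofdiag_l.
\end{equation*}
Comparing this with the formula from Proposition~\ref{PROP:corsum-and-numberofdiag} yields
\begin{equation*}
	\delta_\ell^\corsum
	= \corsum_\ell - \frac{n-1}{n}\bigl[\rr_\ell - (\ell-1)\lineDens_\ell\bigr]
	= \frac{1}{n} + \frac{\triangle_\ell}{n^2}.
\end{equation*}

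Finally, I would extract the bounds from the known range $0\le\triangle_\ell\le 2(\ell-1)(n-1)$ given in Proposition~\ref{PROP:corsum-and-numberofdiag}. The lower bound $\delta_\ell^\corsum \ge 1/n$ is immediate. For the upper bound, it suffices to check that
\begin{equation*}
	\frac{1}{n} + \frac{2(\ell-1)(n-1)}{n^2} < \frac{2\ell}{n},
\end{equation*}
which after clearing denominators reduces to $n + 2\ell > 2$, true because $n\ge 2$ and $\ell\ge 1$.

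There is no real obstacle here: the whole argument is bookkeeping, and the only nontrivial point is that the bound is \emph{strict} (rather than $\le$), which relies on the mild arithmetic inequality $n+2\ell>2$ at the very end.
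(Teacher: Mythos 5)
Your proposal is correct and follows essentially the same route as the paper: both rewrite $\rr_\ell-(\ell-1)\lineDens_\ell$ via the definitions as $\frac{1}{n^2-n}\sum_{l\ge\ell}(l-\ell+1)\numberofdiag_l$, plug this into Proposition~\ref{PROP:corsum-and-numberofdiag} to get $\delta_\ell^\corsum=\frac{n+\triangle_\ell}{n^2}$, and then apply the bounds $0\le\triangle_\ell\le 2(\ell-1)(n-1)$. Your explicit verification that the upper bound is strict (reducing to $n+2\ell>2$) is exactly the detail the paper leaves implicit.
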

\begin{proof}
By Proposition~\ref{PROP:corsum-and-numberofdiag},
\begin{eqnarray*}
	\corsum_\ell
	&=&
	\frac{n^2-n}{n^2}\sum_{l\ge\ell} (l-\ell+1) \linedens_l
	\ +\  \frac{n+\triangle_\ell}{n^2}
\\
	&=&
	\frac{n-1}{n}
	\Big[
			\rr_{\ell}(x,n,\eps) 
			 - (\ell-1)\lineDens_{\ell}(x,n,\eps) 
	\Big]
	+ \frac{n+\triangle_\ell}{n^2}
\end{eqnarray*}
(we omitted the arguments $x$, $n$ and $\eps$), from which the proposition follows.
\end{proof}

By sending $n$ to infinity in Propositions~\ref{PROP:rr-and-corsum}--\ref{PROP:corsum-and-rr} we obtain the next theorem.

\begin{theorem}[{\cite[Theorems 4 and 5]{grendar2013strong}}]\label{THM:rr-and-lavg-via-corsum}
Let $x\in X$, $\ell\in \NNN$ and $\eps>0$. Assume that $\corsum_1(x,\infty,\eps)$,
$\corsum_\ell(x,\infty,\eps)$ and $\corsum_{\ell+1}(x,\infty,\eps)$ exist.
Then (on the right-hand sides, the arguments $x,n,\eps$ are omitted)
\begin{eqnarray*}
	\rr_\ell  (x, \infty, \eps) 
	&=& 
	\ell\corsum_\ell - (\ell-1)\corsum_{\ell+1},
\\
	\deter_\ell  (x, \infty, \eps) 
	&=& 
	\frac{\ell\corsum_\ell - (\ell-1)\corsum_{\ell+1}}
	     {\corsum_1}
	\,,
\end{eqnarray*}
and, provided that $\corsum_\ell(x,\infty,\eps)>0$,
\begin{equation*}
	\lavg_\ell  (x, \infty, \eps) 
	= 
	\ell 
	+ 
	\frac
		{\corsum_{\ell+1}}  
		{\corsum_{\ell} -\corsum_{\ell+1}}\,;
\end{equation*}
in the final formula, $\lavg_\ell  (x, \infty, \eps) =\infty$ provided
the denominator is zero. Further,
\begin{equation*}
	\corsum_\ell(x,\infty,\eps)
	=
	\rr_{\ell}
	 - (\ell-1)\lineDens_{\ell} .
\end{equation*}
\end{theorem}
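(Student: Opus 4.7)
The plan is to send $n\to\infty$ in Propositions~\ref{PROP:rr-and-corsum}, \ref{PROP:lavg-and-corsum}, and~\ref{PROP:corsum-and-rr}, using the standing hypothesis that $\corsum_1(x,\infty,\eps)$, $\corsum_\ell(x,\infty,\eps)$, and $\corsum_{\ell+1}(x,\infty,\eps)$ all exist. The key observation is that every error term appearing in those three propositions is bounded by an expression of the form $O(\ell^2/n)$, with the bound depending only on $\ell$; in particular, it is independent of $x$ and $\eps$. Hence all error terms vanish as $n\to\infty$, and limits can be read off directly.

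First, Proposition~\ref{PROP:rr-and-corsum} gives
\begin{equation*}
	\rr_\ell(x,n,\eps)
	= \frac{n}{n-1}\bigl[\ell\corsum_\ell(x,n,\eps) - (\ell-1)\corsum_{\ell+1}(x,n,\eps)\bigr] - \frac{1}{n-1} + \delta_\ell^\rr
\end{equation*}
with $|\delta_\ell^\rr|\le 2\ell(\ell-1)/n$. Letting $n\to\infty$ shows that $\rr_\ell(x,\infty,\eps)$ exists and equals $\ell\corsum_\ell - (\ell-1)\corsum_{\ell+1}$. Applying this with $\ell=1$ yields $\rr_1(x,\infty,\eps) = \corsum_1(x,\infty,\eps)$, and the formula for $\deter_\ell$ follows from its definition $\deter_\ell = \rr_\ell/\rr_1$ (which is well defined since the hypothesis that $\corsum_1$ exists ensures the denominator is the finite number $\corsum_1(x,\infty,\eps)$).

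Next, Proposition~\ref{PROP:lavg-and-corsum} gives
\begin{equation*}
	\lineDens_\ell(x,n,\eps) = \frac{n}{n-1}\bigl[\corsum_\ell(x,n,\eps) - \corsum_{\ell+1}(x,n,\eps)\bigr] + \delta_\ell^\numberofdiag
\end{equation*}
with $|\delta_\ell^\numberofdiag|\le 2\ell/n$, so $\lineDens_\ell(x,\infty,\eps)$ exists and equals $\corsum_\ell - \corsum_{\ell+1}$. If this difference is positive, then dividing the formula for $\rr_\ell(x,\infty,\eps)$ by it and using the identity $\ell\corsum_\ell-(\ell-1)\corsum_{\ell+1} = \ell(\corsum_\ell-\corsum_{\ell+1}) + \corsum_{\ell+1}$ produces the desired closed form for $\lavg_\ell$. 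If $\corsum_\ell = \corsum_{\ell+1}$, then $\lineDens_\ell(x,\infty,\eps)=0$ while $\rr_\ell(x,\infty,\eps)=\corsum_\ell\ge 0$; the stated convention $\lavg_\ell = \infty$ is exactly what the ratio $\rr_\ell/\lineDens_\ell$ prescribes. Finally, the formula $\corsum_\ell(x,\infty,\eps) = \rr_\ell - (\ell-1)\lineDens_\ell$ follows in the same way from Proposition~\ref{PROP:corsum-and-rr}, whose error term $\delta_\ell^\corsum$ satisfies $1/n\le\delta_\ell^\corsum<2\ell/n$ and thus vanishes.

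There is no substantive obstacle here: the three propositions were already written as exact identities with explicit error bounds for the sole purpose of being passed to the limit. The only mildly delicate point is bookkeeping in the $\lavg_\ell$ case, namely separating the regime $\corsum_\ell>\corsum_{\ell+1}$ (clean algebraic manipulation) from the boundary regime $\corsum_\ell=\corsum_{\ell+1}$, where one must argue that the convention $\lavg_\ell=\infty$ is consistent with the limit $n\to\infty$ of the ratio $\rr_\ell(x,n,\eps)/\lineDens_\ell(x,n,\eps)$ taken along those $n$ for which the denominator is nonzero.
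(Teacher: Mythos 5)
Your proposal is correct and follows essentially the same route as the paper, which proves this theorem precisely by letting $n\to\infty$ in Propositions~\ref{PROP:rr-and-corsum}--\ref{PROP:corsum-and-rr} and using the explicit $O(\ell^2/n)$ error bounds together with the definitions of $\deter_\ell$ and $\lavg_\ell$. Your extra bookkeeping in the boundary case $\corsum_\ell=\corsum_{\ell+1}$ is consistent with the paper's stated convention and adds nothing that conflicts with it.
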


\begin{remark}\label{REM:rr-and-lavg-via-corsum}
Note that, if $X$ is compact, the condition $\corsum_\ell(x,\infty,\eps)>0$ 
is always satisfied \cite[Lemma~8]{spitalsky2018local}.
Further, by \cite{pesin1993rigorous}, 
if $\mu$ is an ergodic measure of $(X,f)$ then, for every $\ell\in\NNN$,
$\corsum_\ell(x,\infty,\eps)$ exists for $\mu$-almost all $x\in X$ and for all
up to countably many $\eps>0$, and is equal to the correlation integral
$\mu\times\mu\{(y,z)\in X\times X\colon \rho_\ell(x,y)\le\eps\}$.
If $(X,f)$ is uniquely ergodic, then this is true for every $x\in X$
\cite[Proposition~3]{spitalsky2018local}.

These facts together with Theorem~\ref{THM:rr-and-lavg-via-corsum} imply that,
provided $X$ is compact and $(X,f)$ is {uniquely ergodic},
asymptotic recurrence rate, determinism and average line length do
not depend on $x\in X$. We do not know whether this is true also for the entropy of line lengths.
\end{remark}

\subsection{Recurrence plots and symbolic recurrence plots}\label{SUBS:gen-eps}
In this subsection we recall the way for obtaining recurrence characteristics 
of a sequence $x\in\Sigma$ for general $\eps$ 
from those calculated from symbolic recurrence plots \cite{faure2010recurrence},
that is, from recurrence plots of a symbolic sequence 
with $\eps_0=1/2$. We start with the following simple lemma.

\begin{lemma}\label{LMM:generalEps}
	Let $\ell\ge 1$ and $n \geq 2$.
	Then, for every different $0 \leq i,j < n$ and every $\eps = 2^{-h}$ ($h \in \NNN$),
	the following are equivalent:
	\begin{enumerate}[label=(\alph*)]
		\item \label{ITEM:eps-1} 
		$(i, j,\ell)$ is a line in $\recplot(x, n, \eps)$;
		
		\item \label{ITEM:eps-2} 
		$(i, j, \ell+h-1)$ is a line in $\recplot(x, n+h-1, \eps_0)$.	
	\end{enumerate}
\end{lemma}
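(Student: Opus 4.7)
The plan is to reduce both conditions to concrete statements about the sequence $x$ itself, and then verify they express exactly the same thing. From the definition of $\rho$ in~\eqref{EQ:rho-def}, for any indices $a,b$ and any $k\ge 1$ we have $\rho(\sigma^a(x),\sigma^b(x))\le 2^{-k}$ if and only if $x_{[a,a+k)}=x_{[b,b+k)}$. In particular, the recurrence condition $\recplot(x,n,\eps)_{a,b}=1$ at threshold $\eps=2^{-h}$ amounts to $x_{[a,a+h)}=x_{[b,b+h)}$, while at threshold $\eps_0=1/2$ it simply says $x_a=x_b$.

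Using this dictionary, I would first translate the ``body'' conditions. For~\ref{ITEM:eps-1}, requiring $\recplot(x,n,\eps)_{i+k,j+k}=1$ for every $0\le k<\ell$ is the same as $x_{[i+k,i+k+h)}=x_{[j+k,j+k+h)}$ for those $k$; taking the union of these overlapping $h$-windows collapses to the single equality $x_{[i,i+\ell+h-1)}=x_{[j,j+\ell+h-1)}$. For~\ref{ITEM:eps-2}, the analogous body condition for a line of length $\ell+h-1$ at threshold $\eps_0$ is $x_{i+k}=x_{j+k}$ for every $0\le k<\ell+h-1$, which is literally the same word equality. The range constraints $0\le i,j\le n-\ell$ in~\ref{ITEM:eps-1} and $0\le i,j\le (n+h-1)-(\ell+h-1)$ in~\ref{ITEM:eps-2} coincide, and the condition $i\ne j$ is common to both.

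For the non-extension requirements, consider the left boundary first: when $\min\{i,j\}>0$, condition~\ref{ITEM:eps-1} demands $x_{[i-1,i+h-1)}\ne x_{[j-1,j+h-1)}$, but the already-established body agreement forces $x_{[i,i+h-1)}=x_{[j,j+h-1)}$, so the mismatch must sit at the leftmost index, i.e.\ $x_{i-1}\ne x_{j-1}$, which is precisely the requirement $\recplot(x,n+h-1,\eps_0)_{i-1,j-1}=0$ from~\ref{ITEM:eps-2}. The right boundary is symmetric: when $\max\{i,j\}<n-\ell$, condition~\ref{ITEM:eps-1} demands $x_{[i+\ell,i+\ell+h)}\ne x_{[j+\ell,j+\ell+h)}$, while the body already yields $x_{[i+\ell,i+\ell+h-1)}=x_{[j+\ell,j+\ell+h-1)}$, so the mismatch collapses to $x_{i+\ell+h-1}\ne x_{j+\ell+h-1}$, which is the non-extension condition in the larger plot of~\ref{ITEM:eps-2}.

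The only real obstacle is careful bookkeeping: keeping the indices straight between two plots of different sizes and confirming that each of the four boundary cases (two indices $\times$ two sides) transfers cleanly. The case $h=1$, where both plots and both line lengths coincide, serves as a sanity check; beyond this index arithmetic the argument is entirely routine.
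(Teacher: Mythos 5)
Your proposal is correct and follows essentially the same route as the paper: both proofs translate each line condition into explicit word equalities/inequalities in $x$ (body agreement collapsing the overlapping $h$-windows to $x_{[i,i+\ell+h-1)}=x_{[j,j+\ell+h-1)}$, and the boundary mismatches reducing to single-letter inequalities at $i-1$ and $i+\ell+h-1$) and then observes the two sets of conditions coincide. The only difference is that you spell out the boundary-case bookkeeping that the paper dismisses with ``the other cases can be described analogously.''
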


\begin{proof}
	We will assume that $\min\{i,j\}>0$ and $\max\{i,j\}<n-\ell$; the other cases can be described analogously.
	
	Note that, by \eqref{EQ:rho-def} and the definition of recurrence plot,
	$\recplot(x,n+h-1,\eps_0)_{ij}=1$ if and only if $x_{i}=x_j$.
	Thus, $(i, j,\ell+h-1)$ is a line in $\recplot(x, n+h-1, \eps_0)$
	if and only if  
	$x_{[i,i+\ell+h-1)}=x_{[j,j+\ell+h-1)}$,
	$x_{i-1} \neq x_{j-1}$ and
	$x_{i+\ell+h-1} \neq x_{j+\ell+h-1}$.
		
	Analogously, $\recplot(x,n,\eps)_{ij}=1$ if and only if $x_{[i,i+h)}=x_{[j,j+h)}$.
	So,
	$(i, j,\ell)$ is a line in $\recplot(x, n, \eps)$
	if and only if 
	$x_{[i+k,i+k+h)}=x_{[j+k,j+k+h)}$ for every $k\in[0, \ell)$,
	$x_{[i-1,i-1+h)} \neq x_{[j-1,j-1+h)}$ and
	$x_{[i+\ell,i+\ell+h)} \neq x_{[j+\ell,j+\ell+h)}$.
	This is clearly equivalent to the fact that $(i, j,\ell+h-1)$ is a line in $\recplot(x, n+h-1, \eps_0)$.
\end{proof}

\begin{proposition}\label{PROP:recPlot-generalEps}
	Let $\ell\ge 1$, $n \geq 2$ and $\eps=2^{-h}$ ($h\in\NNN$).
	Then
	\begin{eqnarray*}
		L_\ell (x, n, \eps) 
		&=& 
		L_{\ell+h-1} (x, n+h-1, \eps_0),
	\\
		\linedens_\ell(x,n,\eps)  
		&=& 
		\theta_{nh} \linedens_{\ell+h-1}(x,n+h-1,\eps_0),
	\\
		\lineDens_\ell(x,n,\eps)  
		&=& 
		\theta_{nh} \lineDens_{\ell+h-1}(x,n+h-1,\eps_0),
	\\
		\rr_\ell (x, n, \eps) 
		&=& \theta_{nh} 
		\big(
			\rr_{\ell+h-1} (x, n+h-1, \eps_0) 
	\\
			&& \qquad- (h-1) \lineDens_{\ell+h-1}(x, n+h-1, \eps_0)
		\big),
	\end{eqnarray*}
	where $\theta_{nh}=((n+h-1)^2-(n+h-1)) / (n^2-n)$.
\end{proposition}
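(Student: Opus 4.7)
The plan is to reduce every identity in the proposition to Lemma~\ref{LMM:generalEps}, which provides a bijection between lines in the two recurrence plots, and then to track how lengths and normalizations transform.

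First, I would observe that Lemma~\ref{LMM:generalEps} gives, for each different pair $0\le i,j<n$, a one-to-one correspondence
\begin{equation*}
	(i,j,\ell)\ \text{line in }\recplot(x,n,\eps)
	\ \Longleftrightarrow\
	(i,j,\ell+h-1)\ \text{line in }\recplot(x,n+h-1,\eps_0).
\end{equation*}
Summing the left-hand side over all lines of length exactly $\ell$ therefore counts the lines of length exactly $\ell+h-1$ in $\recplot(x,n+h-1,\eps_0)$, provided one checks that the correspondence indeed covers \emph{all} lines on the right-hand side with starting points $(i,j)$ satisfying $0\le i,j \le (n+h-1)-(\ell+h-1)=n-\ell$. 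This yields the identity $L_\ell(x,n,\eps)=L_{\ell+h-1}(x,n+h-1,\eps_0)$.

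Second, I would divide by the appropriate normalizations: by definition
\begin{equation*}
	\linedens_\ell(x,n,\eps)=\frac{L_\ell(x,n,\eps)}{n^2-n},\qquad
	\linedens_{\ell+h-1}(x,n+h-1,\eps_0)=\frac{L_{\ell+h-1}(x,n+h-1,\eps_0)}{(n+h-1)^2-(n+h-1)},
\end{equation*}
so the previous identity and the very definition of $\theta_{nh}$ give
\begin{equation*}
	\linedens_\ell(x,n,\eps)=\theta_{nh}\,\linedens_{\ell+h-1}(x,n+h-1,\eps_0).
\end{equation*}
Summing over $l\ge\ell$ (and reindexing $l'=l+h-1$) delivers the formula for $\lineDens_\ell$.

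Third, for the recurrence-rate identity I would apply the same substitution $l'=l+h-1$ inside $\rr_\ell(x,n,\eps)=\sum_{l\ge\ell} l\,\linedens_l(x,n,\eps)$, write $l=l'-(h-1)$, and split the sum into a piece giving $\rr_{\ell+h-1}(x,n+h-1,\eps_0)$ and a piece giving $(h-1)\lineDens_{\ell+h-1}(x,n+h-1,\eps_0)$; factoring $\theta_{nh}$ out produces the stated expression. There is no real obstacle here: the entire argument is bijective counting followed by bookkeeping of normalizations and of the arithmetic shift $l\mapsto l+h-1$; the only subtle point, which I would spell out, is confirming that the bijection in Lemma~\ref{LMM:generalEps} preserves the $0$-boundary and $(n{+}h{-}1)$-boundary statuses correctly so that no lines are lost or double-counted at the corners of the plots.
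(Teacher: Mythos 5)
Your proposal is correct and follows essentially the same route as the paper: the line-count identity comes from Lemma~\ref{LMM:generalEps} (together with the observation that every $(\ell+h-1)$-line in $\recplot(x,n+h-1,\eps_0)$ must start in $[0,n)^2$), the density formulas are just the normalization by $n^2-n$ versus $(n+h-1)^2-(n+h-1)$, and the recurrence-rate identity is the same index shift $l\mapsto l+h-1$ with the split $(l+h-1)-(h-1)$. The boundary-status check you flag is exactly the point the paper handles implicitly through the statement of Lemma~\ref{LMM:generalEps}, so nothing further is needed.
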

\begin{proof}
	The first equality follows from Lemma~\ref{LMM:generalEps}
	(notice that, in $\recplot(x, n+h-1, \eps_0)$, every line of length 
	$\ell+h-1$ must start in $[0,n)^2$). The second and third equalities are trivial
	consequences of the first one. The final equality follows from the definition and the second equality:
	\begin{eqnarray*}
		\rr_\ell (x, n, \eps)
		&=& \sum_{l\ge\ell} l\linedens_l(x,n,\eps)
	\\
		&=& \theta_{nh} \sum_{l\ge\ell} ((l+h-1)-(h-1)) \cdot\linedens_{l+h-1}(x,n+h-1,\eps_0)
	\\
		&=& \theta_{nh} 
		\big(
					\rr_{\ell+h-1} (x, n+h-1, \eps_0) 
	\\
		&&			\qquad- (h-1) \lineDens_{\ell+h-1}(x,n+h-1,\eps_0)
		\big).
	\end{eqnarray*}
\end{proof}

Analogous relations for other recurrence quantifiers (determinism, average line length and
entropy of line lengths) can be obtained using definitions and the preceding proposition.

\subsection{Embedded recurrence plots}\label{SUBS:gen-emb-dim}
In this subsection we recall that the analysis of embedded recurrence plots
for symbolic sequences
can be easily reduced to the analysis of nonembedded recurrence plots, as was mentioned
already in \cite{faure2010recurrence}. 
For completeness, we give all the details. 

Fix $m \in \NNN$.
Put $\Sigma^{(m)} = (\AAa^{m})^{\NNN_0}$ and 
define a metric $\rho^{(m)} $ on $\Sigma^{(m)}$ by 
\begin{equation*}
	\rho^{(m)} (y, z) =
	\begin{cases}
		2^{-h} &\text{ if } y \neq z ,  \text{ where }h= \min\{ i\ge 0\colon y_i \neq z_i \},
		\\
		0 &\text{ if }  y=z,
	\end{cases}
\end{equation*}
for every $y,z \in \Sigma^{(m)}$.

If $y\in\Sigma$ is a sequence over $\AAa$, then the \emph{embedded sequence} $y^{(m)}\in\Sigma^{(m)}$ 
is a sequence over $\AAa^{m}$ defined by
\begin{equation*}
	y^{(m)} = y_0^{(m)}  y_1^{(m)} \ldots = (y_0 y_1 \ldots y_{{m}-1})(y_1 y_2 \ldots y_{m}) \ldots,
	\qquad y_j^{(m)} = \word{y}{j}{j+m}.
\end{equation*}
For embedded sequences, the metric $\rho^{(m)}$ is tightly connected with the metric $\rho$.

\begin{lemma}\label{LMM-emb-rhod-and-rho}
	Let $y,z \in \Sigma$, $m \in \NNN$ and $h \in \NNN_0$. 
	\begin{enumerate}[label=(\alph*)]
		\item \label{LMM-emb-case-a} If $h \geq 1$, then $\rho^{(m)}(y^{(m)}, z^{(m)}) = 2^{-h}$ if and only if
		$ \rho(y,z) = 2^{-h-m+1}$.
		\item \label{LMM-emb-case-b} If $h =0 $, then $\rho^{(m)}(y^{(m)}, z^{(m)}) = 2^{-0}$ if and only if
		$ \rho(y,z) \geq 2^{-m+1}$.
	\end{enumerate}
\end{lemma}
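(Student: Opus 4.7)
The plan is to unpack the definitions of the embedded sequence $y^{(m)}$ and of the two metrics $\rho$ and $\rho^{(m)}$, and to translate ``first index of disagreement'' back and forth between the two settings. The proof will be very similar in spirit to that of Lemma~\ref{LMM-emb-rho-ell-and-rho}, so I do not expect any real obstacle; the only care needed is a careful index shift by $m-1$.

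First I would record the basic observation that, for any $j\in\NNN_0$, $y^{(m)}_j = z^{(m)}_j$ if and only if $y_{[j,j+m)} = z_{[j,j+m)}$, that is, $y_i = z_i$ for every $i\in[j,j+m)$. Taking the union over $j = 0,1,\dots,k-1$ (for $k\ge 1$), this gives
\begin{equation*}
   y^{(m)}_i = z^{(m)}_i \text{ for every } i < k
   \ \Longleftrightarrow \
   y_i = z_i \text{ for every } i < k+m-1.
\end{equation*}

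For part \ref{LMM-emb-case-a}, fix $h\ge 1$. By the definition of $\rho^{(m)}$, $\rho^{(m)}(y^{(m)},z^{(m)}) = 2^{-h}$ means that $y^{(m)}_i = z^{(m)}_i$ for every $i<h$ and $y^{(m)}_h \neq z^{(m)}_h$. By the observation above, the first condition is equivalent to $y_i = z_i$ for every $i<h+m-1$, while the second says that $y$ and $z$ differ at some index in $[h,h+m)$. Together, the only possible position of disagreement is $h+m-1$, hence $\rho(y,z) = 2^{-(h+m-1)}$. The converse direction is immediate by reversing this chain of equivalences.

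For part \ref{LMM-emb-case-b}, $\rho^{(m)}(y^{(m)},z^{(m)}) = 1$ means $y^{(m)}_0 \neq z^{(m)}_0$, i.e.\ $y_{[0,m)} \neq z_{[0,m)}$; this is exactly the condition that $y$ and $z$ disagree at some index $i<m$, which in turn is equivalent to $\rho(y,z) \ge 2^{-(m-1)}$. This completes the proof.
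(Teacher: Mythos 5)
Your proof is correct and follows essentially the same route as the paper's (very terse) argument: both reduce the statement to the observation that agreement of the first $h$ embedded symbols is agreement of $y$ and $z$ up to index $h+m-1$, so case (a) corresponds to $y_{[0,h+m-1)}=z_{[0,h+m-1)}$ with $y_{h+m-1}\neq z_{h+m-1}$, and case (b) to $y_{[0,m)}\neq z_{[0,m)}$. Your write-up just makes the index bookkeeping explicit; no issues.
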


\begin{proof}
	Case \ref{LMM-emb-case-a} corresponds to $y_{[0, h+m-1)} = z_{[0, h+m-1)}$ and
	$y_{h+m-1} \neq z_{h+m-1}$.
	Case \ref{LMM-emb-case-b} corresponds to $y_{[0, m)} \neq z_{[0, m)}$.
\end{proof}

The \emph{shift} $\sigma^{(m)}\colon \Sigma^{(m)} \to \Sigma^{(m)}$ is defined by
$$
	\sigma^{(m)}(y) = \sigma^{(m)}(y_0 y_1 y_2 \dots) =  y_1 y_2 \dots
$$
for $y \in \Sigma^{(m)}$. If $\pi^{(m)}\colon\Sigma\to\Sigma^{(m)}$ denotes the map sending every $y\in\Sigma$
to its embedded sequence $y^{(m)}\in\Sigma^{(m)}$, then clearly (see Figure~\ref{FIG:cd-shift-and-embedding})
\begin{equation}\label{EQ:cd-shift-and-embedding}
	\pi^{(m)}\circ\sigma = \sigma^{(m)}\circ \pi^{(m)}.
\end{equation}

\begin{figure}[!htb]\centering
	\begin{center}
		$\begin{CD}
		\Sigma     @>\sigma>>  \Sigma\\
		@V\pi^{(m)}VV        @VV\pi^{(m)}V\\
		\pi^{(m)}(\Sigma)    @>\sigma^{(m)}>>  \pi^{(m)}(\Sigma)
		\end{CD}$
		\caption{Commutativity of shift and embedding}\label{FIG:cd-shift-and-embedding}		
	\end{center}
\end{figure}

For every $\ell \in \NNN$, define the Bowen metric $\rho_\ell^{(m)}$ on $\Sigma^{(m)}$ by
$$
	\rho_\ell^{(m)}(y,z) = \max \limits_{0 \leq i < \ell}
	\rho^{(m)}\left(
		\bigl(\sigma^{(m)}\bigl)^i(y),   
		\bigl(\sigma^{(m)}\bigl)^i (z)
	\right)
$$
for every $y,z\in\Sigma^{(m)}$.
The following lemma can be proved analogously as Lemma~\ref{LMM-emb-rho-ell-and-rho}.
\begin{lemma}\label{LMM-emb-rhod-ell-and-rhod}
	Let $m\in\NNN$, $y,z \in \Sigma^{(m)}$, $\ell \in \NNN$ and $h \in \NNN_0$.
	\begin{enumerate}[label=(\alph*)]
		\item \label{LMM-emb-case-a-2}
		If $h \geq 1$, then $\rho^{(m)}_\ell(y,z) = 2^{-h}$ if and only if $\rho^{(m)}(y,z) = 2^{-h-\ell+1}$.
		\item \label{LMM-emb-case-b-2}
		If $h = 0$, then $\rho^{(m)}_\ell(y,z) = 2^{-0}$ if and only if $\rho^{(m)}(y,z) \geq 2^{-\ell+1}$.
	\end{enumerate}
\end{lemma}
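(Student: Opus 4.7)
The plan is to mimic the short proof of Lemma~\ref{LMM-emb-rho-ell-and-rho} almost verbatim, taking advantage of the fact that $\rho^{(m)}$ is defined by exactly the same ``first index of disagreement'' rule as $\rho$. The only structural ingredients needed are: (i) the formula $\rho^{(m)}(u,v)=2^{-h}$ iff $u_{[0,h)}=v_{[0,h)}$ and $u_h\ne v_h$, and (ii) the fact that $\bigl(\sigma^{(m)}\bigr)^i(y)_k=y_{i+k}$. Both are immediate from the definitions given just before the lemma.

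First, I would unwind $\rho_\ell^{(m)}(y,z)=2^{-h}$ as $\min_{0\le i<\ell} h_i = h$, where $h_i$ denotes the first index at which $\bigl(\sigma^{(m)}\bigr)^i(y)$ and $\bigl(\sigma^{(m)}\bigr)^i(z)$ disagree (with $h_i=\infty$ if they coincide). By (ii), $h_i$ is exactly the smallest $j\ge 0$ with $y_{i+j}\ne z_{i+j}$.

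For part~\ref{LMM-emb-case-a-2} ($h\ge 1$), I would argue that $\min_{0\le i<\ell} h_i = h$ is equivalent to the conjunction of two conditions: $y_{[i,i+h)}=z_{[i,i+h)}$ for every $i\in[0,\ell)$, and $y_{i+h}\ne z_{i+h}$ for at least one such $i$. The first is equivalent to $y_{[0,\ell+h-1)}=z_{[0,\ell+h-1)}$, and given this, the only possible disagreement location of the form $i+h$ with $i\in[0,\ell)$ is $\ell+h-1$. Hence the two conditions together are equivalent to $y_{[0,\ell+h-1)}=z_{[0,\ell+h-1)}$ together with $y_{\ell+h-1}\ne z_{\ell+h-1}$, which by (i) is precisely $\rho^{(m)}(y,z)=2^{-(h+\ell-1)}$.

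For part~\ref{LMM-emb-case-b-2} ($h=0$), the condition $\rho_\ell^{(m)}(y,z)=1$ means some $h_i=0$ with $i\in[0,\ell)$, i.e.\ $y_{[0,\ell)}\ne z_{[0,\ell)}$. Equivalently, the first index of disagreement of $y$ and $z$ lies in $[0,\ell)$, which is exactly $\rho^{(m)}(y,z)\ge 2^{-(\ell-1)}$. There is no genuine obstacle here: the whole proof reduces to repackaging the definitions, and the only mildly delicate point is noticing that under the equality of initial segments in part~(a), the existence of some $i\in[0,\ell)$ with disagreement at position $i+h$ forces that position to be the last one, $\ell+h-1$.
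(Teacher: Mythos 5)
Your argument is correct and is essentially the paper's own proof: the paper disposes of this lemma by declaring it analogous to Lemma~\ref{LMM-emb-rho-ell-and-rho}, whose proof is precisely the observation that case~(a) corresponds to $y_{[0,h+\ell-1)}=z_{[0,h+\ell-1)}$ with $y_{h+\ell-1}\neq z_{h+\ell-1}$ and case~(b) to $y_{[0,\ell)}\neq z_{[0,\ell)}$. You merely spell out the same repackaging of the first-disagreement definition in more detail, including the (correct) remark that under agreement on $[0,\ell+h-1)$ the only possible disagreement at a position $i+h$ with $i\in[0,\ell)$ is $\ell+h-1$.
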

As a consequence of this lemma and Lemmas~\ref{LMM-emb-rhod-and-rho} and \ref{LMM-emb-rho-ell-and-rho}
we obtain that, for every $\eps\in(0,1)$, $y,z\in\Sigma$ and $m,\ell\in\NNN$,
\begin{equation}\label{EQ:rho-m-ell}
	\rho_\ell^{(m)} (y^{(m)}, z^{(m)}) \leq \eps  
	\qquad\text{if and only if}\qquad
	\rho_\ell (y, z) \leq 2^{-m+1}\eps.
\end{equation}

Fix any sequence $x\in\Sigma$.
Clearly, the definitions of correlation sums, recurrence plots and recurrence characteristics can be 
applied also to the embedded sequence $x^{(m)}$. Notice that, for $\eps=2^{-h}$ $(h\in\NNN)$, $\recplot(x, n, \eps)$ depends on $x_{[0, n+h-1)}$
and $\recplot(x^{(m)}, n, \eps)$ depends on $x_{[0, n+h+m-2)}$.

The following two results immediately follow from \eqref{EQ:rho-m-ell} and \eqref{EQ:cd-shift-and-embedding}.

\begin{proposition}\label{PROP:embeddedCorSum}
	Let $x \in \Sigma$, $m,n,\ell\in\NNN$ and $\eps \in(0,1)$. Then
	$$ 
		\corsum_\ell(x^{(m)},n,\eps) = \corsum_\ell (x, n, 2^{-m+1}\eps).
	$$
\end{proposition}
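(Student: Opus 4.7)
The plan is to unwind the definition of the correlation sum on both sides and match the two counting sets pair-by-pair, using exactly the two ingredients already displayed just before the statement: the commutation identity \eqref{EQ:cd-shift-and-embedding} and the equivalence \eqref{EQ:rho-m-ell}.

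First I would write out the right-hand side explicitly as
\begin{equation*}
	\corsum_\ell(x^{(m)},n,\eps)
	= \frac{1}{n^2}\card\bigl\{(i,j)\colon 0\le i,j<n,\ \rho_\ell^{(m)}\bigl((\sigma^{(m)})^i(x^{(m)}),(\sigma^{(m)})^j(x^{(m)})\bigr)\le \eps\bigr\}.
\end{equation*}
By \eqref{EQ:cd-shift-and-embedding} applied $i$ and $j$ times respectively, the iterates satisfy $(\sigma^{(m)})^i(x^{(m)})=(\sigma^i(x))^{(m)}$ and analogously for $j$, so the argument of $\rho_\ell^{(m)}$ can be replaced by $((\sigma^i(x))^{(m)},(\sigma^j(x))^{(m)})$.

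Next, I would invoke \eqref{EQ:rho-m-ell} with $y=\sigma^i(x)$ and $z=\sigma^j(x)$ to conclude that
\begin{equation*}
	\rho_\ell^{(m)}\bigl((\sigma^i(x))^{(m)},(\sigma^j(x))^{(m)}\bigr)\le\eps
	\quad\Longleftrightarrow\quad
	\rho_\ell\bigl(\sigma^i(x),\sigma^j(x)\bigr)\le 2^{-m+1}\eps.
\end{equation*}
Hence the two indicator sets (one indexed by the embedded pseudometric with threshold $\eps$, the other by the original Bowen metric with threshold $2^{-m+1}\eps$) coincide, so their cardinalities are equal. Dividing by $n^2$ on both sides yields exactly $\corsum_\ell(x^{(m)},n,\eps)=\corsum_\ell(x,n,2^{-m+1}\eps)$.

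There is no real obstacle here; everything boils down to a direct translation of the definitions via the two preparatory lemmas. The only thing to be careful about is the condition $\eps\in(0,1)$, which ensures that $\eps$ lies in a regime where \eqref{EQ:rho-m-ell} applies (the threshold $2^{-m+1}\eps$ is strictly below $2^{-m+1}$, so the case distinction between $h\ge1$ and $h=0$ in Lemmas~\ref{LMM-emb-rhod-and-rho} and \ref{LMM-emb-rhod-ell-and-rhod} is handled uniformly by \eqref{EQ:rho-m-ell}). I would mention this briefly to justify using $\eps\in(0,1)$, and the proof would then be only a couple of lines long.
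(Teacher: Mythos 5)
Your proposal is correct and follows exactly the route the paper intends: the paper states that this proposition ``immediately follows'' from \eqref{EQ:rho-m-ell} and \eqref{EQ:cd-shift-and-embedding}, and your write-up simply makes that derivation explicit (iterating the commutation identity to rewrite $(\sigma^{(m)})^i(x^{(m)})$ as $(\sigma^i(x))^{(m)}$ and then applying \eqref{EQ:rho-m-ell} pairwise). Nothing is missing; your remark on the role of $\eps\in(0,1)$ is consistent with how \eqref{EQ:rho-m-ell} is stated.
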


\begin{proposition}\label{PROP:embeddedRecPlot}
	Let $x\in\Sigma$, $m,n\in \NNN$, ($n \geq 2$) and $\eps \in (0,1)$.
	Then
	\begin{equation*}
			\recplot(x^{(m)}, n, \eps) = \recplot(x, n, 2^{-m+1}\eps).
	\end{equation*}
	Consequently, the corresponding recurrence characteristics obtained from these two recurrence plots are equal.
\end{proposition}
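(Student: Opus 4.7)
The plan is to verify the claimed matrix identity entry by entry, after which the second statement is automatic: each recurrence quantifier introduced in Subsection~\ref{SUBS:RQA} ($\rr_\ell$, $\deter_\ell$, $\lavg_\ell$, $\ent_\ell$) is computed purely from the line counts $\numberofdiag_l$, and hence from the matrix itself, so two coinciding recurrence plots yield identical characteristics.

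For the entry-by-entry check, I would fix arbitrary $0\le i,j<n$ and unfold the definition of $\recplot(x^{(m)},n,\eps)_{i,j}$: it equals $1$ precisely when
\[
\rho^{(m)}\bigl((\sigma^{(m)})^i(x^{(m)}),(\sigma^{(m)})^j(x^{(m)})\bigr)\le\eps.
\]
The first substep is to push the embedding past the iterates of the shift: iterating \eqref{EQ:cd-shift-and-embedding} yields $(\sigma^{(m)})^k(x^{(m)})=(\sigma^k(x))^{(m)}$ for every $k\ge 0$, so the condition rewrites as $\rho^{(m)}((\sigma^i(x))^{(m)},(\sigma^j(x))^{(m)})\le\eps$. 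The second substep is to translate the $\rho^{(m)}$-comparison into a $\rho$-comparison; this is exactly \eqref{EQ:rho-m-ell} taken with $\ell=1$ (where $\rho_1^{(m)}=\rho^{(m)}$ and $\rho_1=\rho$), applied to $y=\sigma^i(x)$ and $z=\sigma^j(x)$. It produces the equivalent condition $\rho(\sigma^i(x),\sigma^j(x))\le 2^{-m+1}\eps$, which is exactly the defining condition for $\recplot(x,n,2^{-m+1}\eps)_{i,j}=1$. Thus the two matrices have the same $(i,j)$-entry for every pair $(i,j)$, and consequently coincide.

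I do not expect any genuine obstacle. Both key inputs \eqref{EQ:cd-shift-and-embedding} and \eqref{EQ:rho-m-ell} are already established in the preceding discussion, and the hypothesis $\eps\in(0,1)$ is exactly what keeps the argument inside case \ref{LMM-emb-case-a-2} of Lemma~\ref{LMM-emb-rhod-ell-and-rhod}, so that no boundary awkwardness with $h=0$ arises; moreover $2^{-m+1}\eps$ also lies in $(0,1)$ whenever $m\ge 1$, so the right-hand recurrence plot is well-defined in the same regime. The only bookkeeping point worth stating explicitly is that the matrix identity alone is enough for the ``consequently'' clause, as the definitions in Subsection~\ref{SUBS:RQA} refer exclusively to the matrix.
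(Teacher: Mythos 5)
Your proof is correct and follows exactly the route the paper intends: the paper simply states that the proposition follows immediately from \eqref{EQ:rho-m-ell} and \eqref{EQ:cd-shift-and-embedding}, and your entry-by-entry verification (commuting the embedding past the shift iterates, then applying \eqref{EQ:rho-m-ell} with $\ell=1$) is precisely the omitted argument. The observation that the ``consequently'' clause is automatic because the quantifiers of Subsection~\ref{SUBS:RQA} depend only on the matrix is also exactly as the paper intends.
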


\subsection{Substitutions}\label{SS:subst}
From now on we restrict our attention to the binary alphabet
$\AAa=\{0,1\}$. Put $\bar{0} = 1$ and $\bar{1} = 0$. 
A map $\zeta\colon \AAa\to\AAa^*$ is said to be 
a \emph{substitution of constant length} or \emph{uniform substitution}
provided there is $q\ge 2$ such that $\abs{\zeta(0)}=\abs{\zeta(1)}=q$.
So $\zeta$ maps each letter ($0$ and $1$) to a word of length $q$ ($\zeta(0)$ and $\zeta(1)$, respectively).

The substitution $\zeta$ induces a morphism (denoted also by $\zeta$) of the monoid $\AAa^*$
by putting $\zeta(o) = o$ and $\zeta(w) = \zeta(w_0) \zeta(w_1) \dots \zeta(w_{\ell-1})$
for any nonempty $\ell$-word $w = w_0 w_1 \dots w_{\ell-1}$.
The iterates $\zeta^k (k \geq 1)$ of $\zeta$ are defined inductively by $\zeta^1 = \zeta$
and $\zeta^k = \zeta \circ\zeta^{k-1}$ for $k\ge 2$.
Further, $\zeta$ induces a map
(again denoted by $\zeta$) from $\Sigma=\AAa^{\NNN_0}$ to $\Sigma$ by
$\zeta(x)=\zeta(x_0)\zeta(x_1)\dots$ for $ x=(x_n)_{n\in\NNN_0} \in \Sigma$.
Since no confusion can arise, all these maps will be simply called \emph{substitution} $\zeta$.

The \emph{language} of the substitution $\zeta$ is
\begin{equation*}
	\lang_\zeta = \{w\in\AAa^*\colon w \text{ is a subword of some } \zeta^k(a)\}.
\end{equation*}
The substitution $\zeta$ defines the \emph{substitution dynamical system}, which is the subshift $(X_\zeta,\sigma)$ with
\begin{equation*}
	X_\zeta = \{
		y\in\Sigma\colon
		y_{[0,n)} \in\lang_\zeta \text{ for every } n\ge 1
	\};
\end{equation*}
that is, $X_\zeta$ is the unique subshift with the language $\lang_\zeta$.
The substitution $\zeta$ is called \emph{aperiodic} if $X_\zeta$ contains a sequence
which is not $\sigma$-periodic \cite[Definition~5.15]{queffelec2010substitution}.
The substitution $\zeta$ is \emph{primitive} if there exists $k \geq 1$ such that, for every $a,b \in \AAa$,
$b$ is in $\zeta^k (a)$ \cite[Definition~5.3]{queffelec2010substitution}.

Assume that $\zeta$ is a primitive aperiodic binary substitution of constant-length $q$ such that
\begin{equation}\label{assumpt-start-with-0}
	\zeta(0) \text{ starts with letter } 0.
\end{equation}
Equivalently, by \cite{seebold1988periodicity}, we assume that $\zeta$ is a
binary substitution of constant-length $q$ such that
\begin{itemize}
\item $\zeta(0)$ starts with $0$ and contains $1$;
\item $\zeta(1)\ne\zeta(0)$ and contains $0$;
\item if $q=2s+1$ is odd, then $\zeta(0)\ne (01)^s0$ or $\zeta(1)\ne (10)^s1$.
\end{itemize}
Clearly, any such substitution $\zeta$ is \emph{injective} (or \emph{one-to-one on the alphabet}), 
that is, $\zeta (0) \neq \zeta (1)$.
Further, there is a unique fixed point $x \in\Sigma$ of $\zeta$ starting with $0$ \cite[page~126]{queffelec2010substitution}; we often write $x=\zeta^\infty(0)$. Obviously,
if we write $\zeta(0)=0v$ (where $v\in\AAa^*$ is of length $q-1$) then
\begin{equation*}
	x = 0v\zeta(v)\zeta^2(v)\dots\zeta^k(v)\dots
\end{equation*}
The subshift $X_\zeta$ is equal to the $\sigma$-orbit closure of the sequence $x$
\cite[Proposition~5.5]{queffelec2010substitution};
hence, $\lang_\zeta$ is equal to the language $\lang(x)$ of $x$.
Finally, by \cite{michel1976stricte} (see also \cite[Theorem~5.6 and Proposition~5.5]{queffelec2010substitution}),
the subshift $(X_\zeta, \sigma)$ is \emph{strictly ergodic}, that is, minimal and uniquely ergodic.

\subsection{Recognizability}
In this subsection, $\zeta$ is a primitive aperiodic binary substitution of constant length $q$
(hence injective) satisfying \eqref{assumpt-start-with-0} 
and $x=\zeta^\infty(0)$ is a unique fixed point of $\zeta$ starting with $0$.

By Mentzen (1989) and \cite{apparicio1999reconnaissabilite}
(see also \cite[Proposition~5.14]{queffelec2010substitution} or \cite[Theorem~4.31]{bruin2022topological}), 
the substitution $\zeta$ is \emph{(one-sided) recognizable}; that is,
there is an integer $K>0$ such that, for every $i,j\in\NNN_0$ with $x_{[i,i+K]}=x_{[j,j+K]}$,
$i$ is a multiple of $q$ if and only if $j$ is a multiple of $q$.
The smallest integer $K$ with this property is called
the \emph{recognizability index of $\zeta$}.

We say that a word $w\in A^*$ is \emph{recognizable} if there is a (unique) integer $p_w \in [0, q)$ such that $\{i \in \NNN_0 \colon \word{x}{i}{i+\abs{w}} = w \} \subseteq q \NNN_0 + p_w$.
For the connection between recognizability of a word and uniqueness of its $1$-cutting (as defined in \cite[p.~210]{fogg2002substitutions}), see \cite[Appendix~A]{polakova2023symbolic}.
The main fact states that a word $w\in\lang_\zeta$ of length at least $q$ 
is recognizable if and only if it has a unique $1$-cutting.
Further, $K$ is the recognizability index of $\zeta$ if and only if
$K$ is the smallest positive integer such that every word $w=x_{[hq,hq+K]}$ ($h\in\NNN_0$) 
is recognizable.

\begin{definition}\label{DEF:alpha-and-beta}
	Let $\alpha, \beta \in \NNN_0$ and $\recog\in\NNN$ be defined as follows:

	\begin{enumerate}[label=(\alph*)]
	\item 
	$\alpha$ is the length of the longest common prefix of $\zeta(0), \zeta(1)$:
	$$ 
		\word{\zeta(0)}{0}{\alpha} = \word{\zeta(1)}{0}{\alpha},
		\quad 
		\zeta(0)_\alpha \neq \zeta(1)_\alpha;
	$$
	
	\item
	$\beta$ is the length of the longest common suffix of $\zeta(0), \zeta(1)$:
	$$ 
		\word{\zeta(0)}{q-\beta}{q} = \word{\zeta(1)}{q-\beta}{q},
		\quad 
		\zeta(0)_{q-\beta-1} \neq \zeta(1)_{q-\beta-1};
	$$
	
	\item 
	$\recog$ is the smallest integer such that $\recog > \alpha+\beta$ and every word $w\in\lang_\zeta$
	of length (at least) $\recog$ is recognizable.
	\end{enumerate}
\end{definition}

Since $\zeta (0) \neq \zeta (1)$, both $\alpha, \beta$ are well-defined and 
$\alpha + \beta \le q-1$. Further, by the mentioned result of Mentzen and Apparicio,
also $\recog$ is well-defined (in fact, $K+1\le \recog\le K+q$, where $K$ is the 
recognizability index of $\zeta$).

\subsection{Density of the set $\kkk_\ell$ of (starting points of) inner lines in symbolic recurrence plot}\label{SUBS:density-kkk_ell}

Let $\zeta$ be a primitive aperiodic binary substitution of constant length $q$ satisfying 
\eqref{assumpt-start-with-0}, and $x=\zeta^\infty(0)$. 
For $\ell\in\NNN$, denote by $\kkk_\ell$ the set of starting points
of inner lines in $\recplot(x,\infty,\eps_0)$:
\begin{equation}\label{EQ:Kell}
	\kkk_\ell
	=
    \{ (i,j) \in \NNN^2\colon 
    	(i,j,\ell) 
		\text{ is an inner line in } \recplot(x,\infty, \eps_0) 
	\}.
\end{equation}

\begin{theorem}[{\cite[Theorem~1.2]{polakova2023symbolic}}]\label{THM:density}
    For any $\ell\in\NNN$, the following is true:
	\begin{enumerate}
		\item \label{Case1-in-THM:density} 
		$\kkk_\ell = \emptyset$ if and only if $\dens(\kkk_\ell) = 0$;

		\item \label{Case3-in-THM:density} 
		if $\ell \geq \recog$ and 
		$\kkk_\ell \neq \emptyset$, there are unique positive integers $\ell_0$ and $k$ such that
		$\ell_0 < \recog\le q\ell_0+\alpha+\beta$, $\ell = q^k \ell_0 + c(q^k-1) $ and
		$$ 
		\dens (\kkk_\ell)
		= q^{-2k}\dens(\kkk_{\ell_0}) 
		= \frac{(\ell_0 + c)^2 }{(\ell + c)^2} \dens(\kkk_{\ell_0}) ,
		$$
		where $c = (\alpha+\beta)/(q-1) \in [ 0, 1]$.	
	\end{enumerate}
	Consequently, the set of all integers $\ell$ with $\kkk_\ell\ne\emptyset$ is a zero density subset of $\NNN_0$.
\end{theorem}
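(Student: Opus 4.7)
The plan is to prove both parts via iterated desubstitution, which yields, for each $\ell \geq \recog$, a bijection between $\kkk_\ell$ and $\kkk_{\ell_0}$ where $\ell_0$ is the iterate at which the line length first drops below $\recog$.

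For part (1), one direction is trivial. For the converse, suppose $(i_0, j_0) \in \kkk_\ell$. The finite ``pattern'' specifying this inner line consists of the two words $x_{[i_0 - 1, i_0 + \ell]}$ and $x_{[j_0 - 1, j_0 + \ell]}$ together with the diagonal offset $d = j_0 - i_0$. By unique ergodicity and minimality of $(X_\zeta, \sigma)$, this pattern recurs along the diagonal $j - i = d$ with positive asymptotic frequency, so $\dens(\kkk_\ell) > 0$.

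For part (2), I would perform a single desubstitution step and then iterate. Given $(i, j, \ell) \in \kkk_\ell$ with $\ell \geq \recog$, recognizability ensures that $x_{[i, i+\ell)}$ has a unique $1$-cutting, so its occurrences at $i$ and $j$ share the same residue modulo $q$, identifying synchronized block boundaries. On the left endpoint, the letter just preceding the line (which differs between the two copies) sits in blocks $\zeta(a), \zeta(b)$ with $a \ne b$, whose common suffix of length $\beta$ extends the matching segment by $\beta$ positions to the left; the symmetric analysis on the right uses the common prefix of length $\alpha$. This identifies an inner line $(i_1, j_1, \ell_1)$ at the previous substitution level with $\ell = q \ell_1 + \alpha + \beta$ and $i = q i_1 - \beta$, $j = q j_1 - \beta$. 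The map $(i_1, j_1) \mapsto (i, j)$ is a bijection between $\kkk_{\ell_1}$ and $\kkk_\ell$; injectivity of $\zeta$ on $\AAa$ provides the inverse.

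Iterating this step $k$ times (stopping once the remaining line has length $\ell_0 < \recog$) gives $\ell = q^k \ell_0 + c(q^k - 1)$ with $c = (\alpha + \beta)/(q-1)$, and the overall map $(i_0, j_0) \mapsto (q^k i_0 + r_k,\ q^k j_0 + r_k)$, for an appropriate constant $r_k = -\beta(q^k-1)/(q-1)$, is a bijection $\kkk_{\ell_0} \to \kkk_\ell$. The scaling property \eqref{EQ:density-props} then yields $\dens(\kkk_\ell) = q^{-2k} \dens(\kkk_{\ell_0})$; the identity $\ell + c = q^k(\ell_0 + c)$ rewrites $q^{-2k}$ as $(\ell_0 + c)^2 / (\ell + c)^2$. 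The concluding statement that $\{\ell : \kkk_\ell \ne \emptyset\}$ has zero density in $\NNN_0$ follows because this set is contained in $\{q^k \ell_0 + c(q^k - 1) : 1 \le \ell_0 < \recog,\ k \ge 0\}$, whose elements grow geometrically in $k$. The main obstacle will be the bookkeeping around maximality of the line: verifying that the no-extension conditions at both endpoints survive under both desubstitution and its inverse, so that $\zeta$-injectivity together with recognizability really yields a well-defined two-sided correspondence.
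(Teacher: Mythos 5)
This theorem is not proved in the present manuscript at all: it is quoted verbatim from \cite[Theorem~1.2]{polakova2023symbolic}, so the only comparison available is with that companion paper. Your treatment of part~(2) is essentially the standard desubstitution argument one expects there: recognizability forces the two occurrences of the matched word to be synchronized modulo $q$, the mismatched letters on either side of the line sit in blocks $\zeta(0)\ne\zeta(1)$ whose common suffix and prefix account for the $\beta$ and $\alpha$ overhangs, and the resulting affine bijection $(i_1,j_1)\mapsto(qi_1-\beta,\,qj_1-\beta)$ between $\kkk_{\ell_1}$ and $\kkk_{q\ell_1+\alpha+\beta}$ combines with \eqref{EQ:density-props} to give the scaling $q^{-2k}$. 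You correctly flag the remaining bookkeeping (maximality at both endpoints surviving the correspondence, uniqueness of $(k,\ell_0)$ from the normalization $\ell_0<\recog\le q\ell_0+\alpha+\beta$), and the zero-density conclusion is fine.

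There is, however, a genuine gap in your argument for part~(1). You claim that the pattern of an inner line ``recurs along the diagonal $j-i=d$ with positive asymptotic frequency, so $\dens(\kkk_\ell)>0$.'' But $\dens$ here is the \emph{two-dimensional} density $\lim n^{-2}\card(\,\cdot\cap[0,n)^2)$, and a single diagonal contributes at most $n$ points to $[0,n)^2$; positive frequency along one diagonal (or along any fixed finite set of diagonals) therefore yields density $0$, not a positive lower bound. The correct mechanism is the product structure: if $(i_0,j_0,\ell)$ is an inner line, set $u=x_{[i_0-1,i_0+\ell]}$ and $v=x_{[j_0-1,j_0+\ell]}$; then \emph{every} pair $(i,j)$ with $i\ne j$, $x_{[i-1,i+\ell]}=u$ and $x_{[j-1,j+\ell]}=v$ starts an inner $\ell$-line, so $\kkk_\ell$ contains $(\mathcal{O}_u\times\mathcal{O}_v)\setminus\Delta$, where $\mathcal{O}_u,\mathcal{O}_v\subseteq\NNN$ are the (shifted) occurrence sets of $u,v$. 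Unique ergodicity and minimality give each occurrence set a positive one-dimensional density, and $\dens(M\times N)=\dens(M)\dens(N)$ then yields a positive two-dimensional density; decomposing $\kkk_\ell$ as the finite disjoint union of such products over admissible word pairs $(u,v)$ also settles the \emph{existence} of $\dens(\kkk_\ell)$, which your diagonal argument does not address but which the theorem implicitly asserts and the rest of the paper uses throughout.
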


\section{Recurrence quantification analysis for primitive substitutions; proof of Theorem~\ref{T:formulas-via-infinite-sums}}\label{S:primit}

Till the end of this section assume that $\zeta$ is a primitive aperiodic binary
substitution of constant length $q$ satisfying \eqref{assumpt-start-with-0},
and $x=\zeta^\infty(0)$.
We derive explicit formulas for recurrence rate, determinism, average line length
and entropy of line lengths based on the densities of the sets $\kkk_\ell$
of (starting points of) inner lines in symbolic recurrence plot $\recplot(x,\infty,\eps_0)$, 
see Theorems~\ref{T:formulas-via-infinite-sums}, \ref{T:formulas} and \ref{T:formulas-ent}. To this end, we first prove some technical lemmas.

\subsection{Auxiliary lemmas}\label{SUBSECTION:technical-lemmas}
For every $\ell\ge 1$ and $n\ge 2$ define
$$
	\delta_\ell(n) = \dfrac{\card (\kkk_\ell \cap [1, n)^2)}{n^{2} - n} \,.
$$

\begin{lemma}\label{LMM:ohranicenie-zhora}
	There are constants $0<\tilde{c}_0\le 1\le \tilde{c}$ such that, for every $\ell \geq 1$ and $n \geq 2$
	with $\kkk_\ell\cap[1,n)^2\ne\emptyset$,
	$$
		\dfrac{\tilde{c}_0}{\ell^2}
		\leq
		\delta_\ell(n) 
		\leq 
		\dfrac{\tilde{c}}{\ell^2} \,.
	$$
\end{lemma}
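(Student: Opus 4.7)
The plan is to deduce the bounds on $\delta_\ell(n)$ from two ingredients: the explicit formula for $\dens(\kkk_\ell)$ in Theorem~\ref{THM:density}, which already behaves like $\ell^{-2}$ when nonzero, and the self-similar description of the sets $\kkk_\ell$ established in \cite{polakova2023symbolic}, which lets one pass from asymptotic density to uniform finite counts.

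First, I would establish that $\dens(\kkk_\ell)\asymp\ell^{-2}$ uniformly. By Theorem~\ref{THM:density}\eqref{Case3-in-THM:density}, every $\ell\ge\recog$ with $\kkk_\ell\ne\emptyset$ admits a unique decomposition $\ell=q^k\ell_0+c(q^k-1)$ with $\ell_0<\recog$ and $k\ge 1$, and
\[
	\dens(\kkk_\ell)=\frac{(\ell_0+c)^2}{(\ell+c)^2}\,\dens(\kkk_{\ell_0}).
\]
Only finitely many $\ell_0\in[1,\recog)$ are admissible, so the nonzero values $\dens(\kkk_{\ell_0})$ and the quantities $(\ell_0+c)^2$ all lie in a bounded positive interval determined by $\zeta$. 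Combined with the trivial remaining case $\ell<\recog$, this yields constants $0<c_0\le c_1$ with $c_0/\ell^2\le\dens(\kkk_\ell)\le c_1/\ell^2$ whenever $\kkk_\ell\ne\emptyset$.

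Next, I would transfer this bound to the finite count $\card(\kkk_\ell\cap[1,n)^2)$. Here I would invoke the structural result of \cite{polakova2023symbolic}, which identifies $\kkk_\ell$ (for $\ell\ge\recog$, $\ell=q^k\ell_0+c(q^k-1)$) with a scaling by $q^k$ together with a diagonal translation of $\kkk_{\ell_0}$. This gives an identity of the form $\card(\kkk_\ell\cap[1,n)^2)=\card(\kkk_{\ell_0}\cap[1,n')^2)+O(1)$ with $n'\asymp n/q^k$. Since there are only finitely many nonempty base sets $\kkk_{\ell_0}$ ($\ell_0<\recog$), each with a known explicit structure, one has the uniform two-sided comparison $\card(\kkk_{\ell_0}\cap[1,n')^2)\asymp \max\{(n')^2\dens(\kkk_{\ell_0}),\,1\}$, with implicit constants depending only on $\zeta$. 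Pulling back gives
\[
	\card(\kkk_\ell\cap[1,n)^2)\ \asymp\ \max\Big\{\frac{n^2}{\ell^2},\ 1\Big\}
\]
uniformly in $\ell$ and $n$ with $\kkk_\ell\cap[1,n)^2\ne\emptyset$. Dividing by $n^2-n$ yields $\delta_\ell(n)\asymp\ell^{-2}$, which is the claim.

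The main obstacle is the small-$n$ regime $n\lesssim q^k$, where the scaled box $[0,n')^2$ essentially degenerates. Here, the hypothesis $\kkk_\ell\cap[1,n)^2\ne\emptyset$ forces $\card(\kkk_\ell\cap[1,n)^2)\ge 1$ and, via the structural description, also $n\gtrsim\ell$ (because the diagonal translation places the first point of $\kkk_\ell$ at distance of order $\ell$ from the origin); thus $1/(n^2-n)\asymp 1/\ell^2$, so the single-point estimate already provides matching upper and lower bounds of order $1/\ell^2$. The technical work lies in keeping track of the boundary corrections in the self-similar identity and ensuring that the constants are absorbed uniformly into $\tilde c_0$ and $\tilde c$, rather than in any new idea.
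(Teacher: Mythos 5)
Your plan follows essentially the same route as the paper's proof: reduce to the finitely many base lengths $\ell_0<\recog$ via the self-similar count identity $\card(\kkk_\ell\cap[1,n)^2)=\card(\kkk_{\ell_0}\cap[1,z)^2)$ with $z\approx n/q^k$, exploit that the finitely many base densities $\dens(\kkk_{\ell_0})$ are positive so that $\delta_{\ell_0}(n)\to\dens(\kkk_{\ell_0})$ gives uniform two-sided bounds, and handle the small-box regime separately. The only notable difference is a detail: where you argue that nonemptiness of $\kkk_\ell\cap[1,n)^2$ forces $n\asymp\ell$ and then use the single-point bound, the paper instead observes that for $n\le q^k$ the intersection is automatically empty, so that case is vacuous; both resolutions work.
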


\begin{proof}
	By Theorem~\ref{THM:density}, the density of $\kkk_\ell$ exists and is positive for every $\ell$ such that $\kkk_\ell\ne\emptyset$. Hence,
	for every $\ell$ with $\kkk_\ell\ne\emptyset$ we can find $\tilde{c}_\ell\ge\tilde{c}_{0,\ell}>0$
	such that ${\tilde{c}_{0,\ell}}/{\ell^2}  \leq \delta_\ell(n) \leq {\tilde{c}_\ell}/{\ell^2}$
	for every $n\ge 2$ with $\kkk_\ell\cap[1,n)^2\ne\emptyset$. Thus, to prove the lemma, we may assume that $\ell\ge\recog$ and $\kkk_\ell\ne\emptyset$.

	Let $k \in \NNN$ and $\ell_0 < \recog$ be from Theorem~\ref{THM:density}.
	Fix any $n \geq 2$. Then, by Theorem~\ref{THM:density},
	\begin{equation}\label{EQ:LMM-ohranicenie-zhora1}
		\card (\kkk_\ell \cap [1, n)^2) = \card (\kkk_{\ell_0} \cap [1, z)^2)
		\leq z^2,
	\end{equation}
	where
	$$ 
		z 
		= 
		\frac{n}{q^k} + \frac{\beta (q^k-1)}{(q-1)q^k} 
		\ \in \ 
		\Big[
			\frac{n}{q^k}, \frac{n}{q^k} + 1
		\Big).  
	$$
	
	Assume first that $n > q^k$. Then $z < 2n/q^k$ and so
	$$
		\card (\kkk_\ell \cap [1, n)^2) < \frac{4n^2}{q^{2k}}.
	$$
	By Theorem~\ref{THM:density}, $\ell = q^k(\ell_0 + c) -c$, where $c = (\alpha + \beta)/(q-1)$; notice that $0 \leq c \leq 1$. Hence $q^k = (\ell+c)/(\ell_0 + c)$. This, together with the fact that $\ell_0 < \recog$, gives
	$\card (\kkk_\ell \cap [1, n)^2)   <    4 \recog^2 n^2/\ell^2$.
	Since $n > q^k \geq 2$, we have $n<n^2/2$ and so
	\begin{equation}\label{EQ:LMM-ohranicenie-zhora2}
		\delta_\ell(n)
		<
		\dfrac{8\recog^2}{\ell^2}.
	\end{equation}
	
	Assume now that $n \leq q^k$. Then $z < 2$ and
	$\kkk_{\ell_0} \cap [1, z)^2 \subseteq \kkk_{\ell_0} \cap \{(1,1)\} = \emptyset$.
	So, by \eqref{EQ:LMM-ohranicenie-zhora1}, 
	$\kkk_\ell\cap[1,n)^2=\emptyset$ and \eqref{EQ:LMM-ohranicenie-zhora2} is trivially true.
	
	We have proved that \eqref{EQ:LMM-ohranicenie-zhora2} is true
	for every $\ell\ge\recog$ and every $n\ge 2$, from which the existence of $\tilde{c}$
	immediately follows.

	\medskip
	To show the existence of $\tilde{c}_0$, denote by $\LLl_0$ the set of all integers $1\le l_0<\recog$
	such that $\kkk_{l_0}\ne\emptyset$. Fix positive $\gamma' < (1/4)\min\{\dens(\kkk_{l_0})\colon l_0\in\LLl_0\}$.
	Since $\lim_n \delta_{l_0}(n)=\dens(\kkk_{l_0})$ for every $l_0\in\LLl_0$, 
	there is $m_0$ such that $\delta_{l_0}(m)\ge 4\gamma'$ 
	for every integer $m\ge m_0$ and every $l_0\in \LLl_0$.
	We may assume that $m_0\ge 4$; hence, for every $l_0\in\LLl_0$ and every \emph{real} $s\ge m_0$,
	\begin{equation}\label{LMM:ohranicenie-zhora-proof1}
	\begin{split}
		\card (\kkk_{l_0} \cap [1, s)^2)
		&\ge
		\card (\kkk_{l_0} \cap [1, \lfloor s \rfloor)^2)
		\ge
		4\gamma'(\lfloor s \rfloor^2 - \lfloor s \rfloor)
		\ge 
		2\gamma' \lfloor s \rfloor^2
	\\		
		&>
		2\gamma' (s-1)^2
		\ge 
		\gamma' s^2.
	\end{split}
	\end{equation}
	Put $\gamma=\min\{\gamma', 1/m_0^2\}$. 
	By combining \eqref{LMM:ohranicenie-zhora-proof1} with the fact that $1> \gamma s^2$ for every $1\le s<m_0$,
	we obtain that, for every $l_0\in\LLl_0$ and every real $s\ge 1$, 
	\begin{equation*}
		\kkk_{l_0}\cap[1,s)^2\ne\emptyset
		\quad\implies\quad
		\card (\kkk_{l_0} \cap [1, s)^2) 
		>
		\gamma s^2.
	\end{equation*}
	This together with \eqref{EQ:LMM-ohranicenie-zhora1} and the fact that $q^k=(\ell+c)/(\ell_0+c)$ yield
	\begin{equation*}
		\card (\kkk_\ell \cap [1, n)^2)
		=
		\card (\kkk_{\ell_0} \cap [1, z)^2)
		> 
		\gamma z^2
		\ge
		\frac{\gamma n^2}{q^{2k}}
		=
		\frac{\gamma n^2(\ell_0+c)^2}{(\ell+c)^2}		
		\ge 
		\frac{\gamma n^2}{4\ell^2}		
	\end{equation*}
	for every $\ell\ge\recog$ and every integer $n>0$ such that
	$\kkk_\ell \cap [1, n)^2\ne\emptyset$. This immediately implies the existence of $\tilde{c}_0$.
\end{proof}

\begin{lemma}\label{LMM:gama-is-finite}
	Let $\gamma = (\gamma_\ell)_{\ell \geq 1}$ be defined by 
	\begin{equation*}
		\gamma_\ell = 
		\begin{cases}
			{\tilde{c}}/{\ell}   &\text{if } \kkk_\ell \neq \emptyset,
			\\
			0      &\text{otherwise},
		\end{cases} 
	\end{equation*}
	where $\tilde{c}$ is the constant
	from Lemma~\ref{LMM:ohranicenie-zhora}. Then
	$$\sum_{\ell = 1}^{\infty} \gamma_\ell < \infty. $$
\end{lemma}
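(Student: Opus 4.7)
The plan is to exploit the sparsity of the indices $\ell$ with $\kkk_\ell\ne\emptyset$ supplied by Theorem~\ref{THM:density}, since the bound $\gamma_\ell\le\tilde c/\ell$ alone would only give the (divergent) harmonic series. The convergence therefore has to come entirely from the fact that, above the threshold $\recog$, those $\ell$ form an extremely thin set.

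First I would split the sum at $\recog$:
\begin{equation*}
	\sum_{\ell=1}^{\infty} \gamma_\ell
	\ = \
	\sum_{\ell=1}^{\recog-1} \gamma_\ell
	\ + \
	\sum_{\ell\ge \recog,\, \kkk_\ell\ne\emptyset} \frac{\tilde c}{\ell}\,.
\end{equation*}
The first sum is a finite sum of finite terms, so it is bounded.

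For the second sum, I would invoke Theorem~\ref{THM:density}\eqref{Case3-in-THM:density}: every $\ell\ge\recog$ with $\kkk_\ell\ne\emptyset$ admits a unique representation $\ell = q^k(\ell_0+c) - c$ with $\ell_0\in\{1,\ldots,\recog-1\}$ and $k\in\NNN$, where $c=(\alpha+\beta)/(q-1)\in[0,1]$. Hence the second sum is dominated by
\begin{equation*}
	\sum_{\ell_0=1}^{\recog-1}\sum_{k=1}^{\infty}\frac{\tilde c}{q^k(\ell_0+c)-c}\,.
\end{equation*}
Since $\ell_0+c\ge 1$ and $k\ge 1$, the denominator is at least $q^k-1\ge q^k/2$ (using $q\ge 2$), so each inner sum is bounded by $\sum_{k\ge 1} 2\tilde c/q^k = 2\tilde c/(q-1)$. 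This yields
\begin{equation*}
	\sum_{\ell\ge \recog,\, \kkk_\ell\ne\emptyset} \frac{\tilde c}{\ell}
	\ \le \
	\frac{2\tilde c(\recog-1)}{q-1} < \infty,
\end{equation*}
and combining with the finite initial piece gives the claim.

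There is no real obstacle here: the whole content is already packaged in Theorem~\ref{THM:density}, which tells us that the indices $\ell\ge\recog$ with $\kkk_\ell\ne\emptyset$ lie on at most $\recog-1$ geometric progressions with ratio $q\ge 2$. Geometric decay on each of these progressions absorbs the $1/\ell$ factor, so the sum is dominated by a convergent geometric series. The only thing to be careful with is to correctly invoke the uniqueness of the representation $(\ell_0,k)\mapsto\ell$, so as to avoid double-counting.
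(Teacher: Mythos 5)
Your proof is correct and follows essentially the same route as the paper: both use Theorem~\ref{THM:density}(2) to parametrize the indices $\ell\ge\recog$ with $\kkk_\ell\ne\emptyset$ by the pairs $(\ell_0,k)$ with $\ell_0<\recog$, $k\ge 1$, and then bound the tail by convergent geometric series. The only (immaterial) difference is bookkeeping: the paper bounds $\gamma_\ell\le q^{-k}\gamma_{\ell_0}$ and sums over $k$, while you bound each term uniformly by $2\tilde c/q^k$ and sum over $\ell_0$.
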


\begin{proof}
	Put $\mathcal{L} = \{ \ell \in\NNN \colon \kkk_\ell \neq \emptyset \}$.
	Fix any $\ell \geq \recog$ from $\mathcal{L}$.
	By Theorem~\ref{THM:density}, there exist unique $k_\ell \geq 1$
	and $\ell_{0,\ell} <\recog$ such that $ \ell = q^{k_\ell} \big(\ell_{0,\ell} + c\big)-c$,
	where $c = (\alpha+\beta)/(q-1)$.
	Thus,
	$$ 
		\gamma_\ell =
		\frac{\tilde{c}}{q^{k_\ell}(\ell_{0,\ell}+c)-c} 
		\le
		q^{-k_\ell}  \gamma_{\ell_{0,\ell}} \, .
	$$
	Further, if $\ell \neq \ell'$ are such that $k_\ell = k_{\ell'}$,
	then $\ell_{0,\ell} \neq \ell_{0,\ell'}$.
	These two facts imply
	$$ 
		\sum_{\ell = 1}^{\infty} \gamma_\ell
		= 
		\sum_{\ell < \recog} \gamma_\ell +\sum_{k=1}^{\infty} 
		\sum_{\substack{\ell \geq \recog, \, \ell \in \mathcal{L} \\ k_\ell = k}} \gamma_\ell  
		\le
		\Gamma + \sum_{k=1}^{\infty}q^{-k} \Gamma 
		= 
		\frac{q}{q-1}\Gamma 
		< 
		\infty,
	$$
	where $\Gamma = \sum_{\ell < \recog} \gamma_\ell$.
\end{proof}

\begin{lemma}\label{LMM:moore-osgood}	
	For every integer $\ell \geq 1$,
	\begin{equation*}
		\lim\limits_{n \to \infty} \sum_{l \geq \ell} l\delta_{l}(n)
		=
		\sum_{l=\ell}^{\infty}l \dens(\kkk_l).	
	\end{equation*}
\end{lemma}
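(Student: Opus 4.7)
The plan is to verify the conditions of Tannery's theorem (the discrete analogue of dominated convergence) and then apply it to interchange the limit in $n$ with the infinite summation over $l$.

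First I would handle the pointwise limit. For any fixed $l \geq \ell$, the quantity $\delta_l(n) = \card(\kkk_l \cap [1,n)^2)/(n^2-n)$ is a simple perturbation of $\card(\kkk_l \cap [0,n)^2)/n^2$, so by Theorem~\ref{THM:density} it converges to $\dens(\kkk_l)$ as $n \to \infty$ (a short calculation handles the elements of $\kkk_l$ that meet the axes, which have density zero). Consequently $l\delta_l(n) \to l\,\dens(\kkk_l)$ pointwise in $l$.

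Next I would produce a summable dominating majorant. Let $\gamma = (\gamma_l)_{l \geq 1}$ be the sequence from Lemma~\ref{LMM:gama-is-finite}, that is, $\gamma_l = \tilde c/l$ if $\kkk_l \neq \emptyset$ and $\gamma_l = 0$ otherwise. I claim $l\delta_l(n) \leq \gamma_l$ for every $l,n$. Indeed, if $\kkk_l = \emptyset$ then $\delta_l(n) = 0 = \gamma_l$; if $\kkk_l \cap [1,n)^2 = \emptyset$ then again $\delta_l(n) = 0 \leq \gamma_l$; and otherwise Lemma~\ref{LMM:ohranicenie-zhora} gives $\delta_l(n) \leq \tilde c/l^2$, so $l\delta_l(n) \leq \tilde c/l = \gamma_l$. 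By Lemma~\ref{LMM:gama-is-finite}, $\sum_{l \geq 1} \gamma_l < \infty$.

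Having pointwise convergence plus a summable dominator, Tannery's theorem gives
\[
\lim_{n\to\infty} \sum_{l \geq \ell} l\delta_l(n) = \sum_{l \geq \ell} \lim_{n\to\infty} l\delta_l(n) = \sum_{l \geq \ell} l\,\dens(\kkk_l),
\]
which is the desired identity. The only genuine content is the uniform bound supplied by Lemma~\ref{LMM:ohranicenie-zhora} together with the summability coming from Lemma~\ref{LMM:gama-is-finite}; there is no real obstacle beyond checking that the boundary contribution (elements of $\kkk_l$ with a coordinate equal to $0$) is negligible in the pointwise limit, which is immediate since such points form a one-dimensional slice of a subset of $\NNN_0^2$.
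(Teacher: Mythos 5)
Your proposal is correct and follows essentially the same route as the paper: the paper also takes the pointwise limit $\delta_l(n)\to\dens(\kkk_l)$, dominates $l\delta_l(n)$ by the summable sequence $\gamma_l$ from Lemmas~\ref{LMM:ohranicenie-zhora} and \ref{LMM:gama-is-finite}, and interchanges limit and sum, merely phrasing the interchange via the Weierstrass M-test and the Moore--Osgood theorem instead of Tannery's theorem. (Your worry about boundary points is moot since $\kkk_l\subseteq\NNN^2$ by definition, but it does no harm.)
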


\begin{proof}
Since $\lim_n \delta_l(n) = \dens(\kkk_l)$ by the definition of density,
we have
$$ 
	\lim\limits_{n \to \infty} \sum_{l = \ell}^{k} l \delta_l(n) 
	= 
	\sum_{l = \ell}^{k} l \dens (\kkk_l)
$$
for every finite $k\ge\ell$.
Recall the definition of $\gamma_\ell$ from Lemma~\ref{LMM:gama-is-finite}.
By Lemma~\ref{LMM:ohranicenie-zhora},
$$
	0 
	\leq 
	l\delta_l(n) 
	\leq 
	\gamma_l 
	\qquad\text{for every } l \geq \ell.
$$
Lemma~\ref{LMM:gama-is-finite} and Weierstrass M-test yield
$$ 
	\lim\limits_{k \to \infty} \sum_{l = \ell}^{k} l \delta_l(n) 
	= 
	\sum_{l = \ell}^{\infty} l \delta_l(n) 
	\qquad\text{uniformly in }n.
$$
Now the lemma follows by Moore-Osgood theorem (see e.g. \cite[p.140]{taylor1985general}).
\end{proof}

\begin{lemma}\label{LMM:moore-osgood-linedens}	
	For every integer $\ell \geq 1$,
	\begin{equation*}
		\lim\limits_{n \to \infty} \delta_{\ell}(n)
		=
		\dens(\kkk_\ell)	
	\quad\text{and}\quad
		\lim\limits_{n \to \infty} \sum_{l=\ell}^{\infty} \delta_{l}(n)
		=
		\sum_{l=\ell}^{\infty} \dens(\kkk_l).	
	\end{equation*}
\end{lemma}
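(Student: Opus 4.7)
The plan is to prove the two limits separately, both reducing to bookkeeping on top of results already in hand. For the first equality, observe that $\delta_\ell(n)$ differs from the standard density quotient $\card(\kkk_\ell\cap[0,n)^2)/n^2$ in two asymptotically negligible ways: the denominator $n^2-n$ satisfies $n^2/(n^2-n)\to 1$, and the counting set $[1,n)^2$ differs from $[0,n)^2$ by lattice points on the axes, of total cardinality at most $2n-1=o(n^2)$. Since $\dens(\kkk_\ell)$ exists (Theorem~\ref{THM:density} says $\kkk_\ell$ is either empty, hence of density zero, or primitive-inherited with explicit density), these two corrections drop out in the limit, giving $\lim_n \delta_\ell(n)=\dens(\kkk_\ell)$.

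For the second equality, the argument mirrors the proof of Lemma~\ref{LMM:moore-osgood} almost verbatim, and the first part of the present lemma plays the role there served by the definition of density. First, for every finite $k\ge\ell$ the first equality, applied termwise, yields
\begin{equation*}
  \lim_{n\to\infty} \sum_{l=\ell}^{k} \delta_l(n) \ = \ \sum_{l=\ell}^{k}\dens(\kkk_l).
\end{equation*}
Next, Lemma~\ref{LMM:ohranicenie-zhora} provides the uniform bound $\delta_l(n)\le \tilde c/l^2$ whenever $\kkk_l\cap[1,n)^2\ne\emptyset$, and $\delta_l(n)=0$ otherwise, so $\delta_l(n)\le \tilde c/l^2$ uniformly in $n$; since $\sum_l \tilde c/l^2<\infty$, the Weierstrass M-test delivers
\begin{equation*}
  \lim_{k\to\infty}\sum_{l=\ell}^{k} \delta_l(n) \ = \ \sum_{l=\ell}^{\infty} \delta_l(n) \qquad \text{uniformly in } n.
\end{equation*}
Applying the Moore--Osgood theorem to interchange the two limits then closes the argument.

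I do not anticipate a genuine obstacle: Theorem~\ref{THM:density} and Lemma~\ref{LMM:ohranicenie-zhora} already do all the substantive work, and what remains is analytic bookkeeping. The only sliver of care is in the first equality, where one must confirm that shifting from $[0,n)^2$ to $[1,n)^2$ and from denominator $n^2$ to $n^2-n$ is harmless at the level of limits; everything else is a direct transplant of the Weierstrass/Moore--Osgood mechanism from Lemma~\ref{LMM:moore-osgood}.
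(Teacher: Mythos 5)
Your proposal is correct and follows essentially the same route as the paper: the first limit is just the definition of density (the shift from $[0,n)^2$ to $[1,n)^2$ and from $n^2$ to $n^2-n$ being harmless), and the second is the Weierstrass/Moore--Osgood mechanism of Lemma~\ref{LMM:moore-osgood}. The only cosmetic difference is that you use the majorant $\tilde c/l^2$ directly, while the paper bounds $\delta_l(n)\le\gamma_l/l\le\gamma_l$ and invokes Lemma~\ref{LMM:gama-is-finite}; both are valid.
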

\begin{proof}
The first equality follows from the definition of the density.
The proof of the second equality is the same as that of Lemma~\ref{LMM:moore-osgood},
just instead of $l\delta_l(n)$ write $\delta_l(n)$ and instead of $l \dens (\kkk_l)$
write $\dens (\kkk_l)$, and use that $0\le\delta_l(n)\le\gamma_l/l \le\gamma_l$ for every $l\ge\ell$.
\end{proof}

\begin{lemma}\label{LMM:moore-osgood-ent}	
	For every integer $\ell \geq 1$,
	\begin{equation*}
		\lim\limits_{n \to \infty} \sum_{l=\ell}^{\infty} \delta_{l}(n)\log\delta_{l}(n)
		=
		\sum_{l=\ell}^{\infty} \dens(\kkk_l)\log \dens(\kkk_l).
	\end{equation*}
\end{lemma}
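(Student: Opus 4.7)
The plan is to imitate the proofs of Lemmas~\ref{LMM:moore-osgood} and \ref{LMM:moore-osgood-linedens}: combine pointwise-in-$l$ convergence as $n\to\infty$ with a uniform-in-$n$ summable domination, and then interchange the two limits using the Weierstrass M-test and the Moore--Osgood theorem.

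For pointwise convergence, I would invoke Lemma~\ref{LMM:moore-osgood-linedens}, which gives $\delta_l(n)\to\dens(\kkk_l)$ for each fixed $l$. Since $t\mapsto t\log t$ is continuous on $[0,1]$ (under the convention $0\log 0=0$), it follows that
$$
\lim_{n\to\infty} \delta_l(n)\log\delta_l(n) \;=\; \dens(\kkk_l)\log\dens(\kkk_l),
$$
and summing finitely many such terms yields
$$
\lim_{n\to\infty} \sum_{l=\ell}^{k} \delta_l(n)\log\delta_l(n) \;=\; \sum_{l=\ell}^{k}\dens(\kkk_l)\log\dens(\kkk_l)
$$
for every finite $k\ge\ell$.

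The step that I expect to need the most care is producing the uniform dominating series, because $\varphi(t)=-t\log t$ is not monotone on $[0,1]$, so the bound of Lemma~\ref{LMM:ohranicenie-zhora} cannot be inserted directly. I would exploit that $\varphi$ is increasing on $[0,1/e]$ and bounded by $1/e$ on $[0,1]$. Pick $l_0\in\NNN$ so large that $\tilde{c}/l^2\le 1/e$ for every $l\ge l_0$, where $\tilde{c}$ is the constant from Lemma~\ref{LMM:ohranicenie-zhora}. For $l\ge l_0$, either $\delta_l(n)=0$ (and the $l$-th summand vanishes), or $\delta_l(n)\in(0,\tilde{c}/l^2]\subseteq(0,1/e]$, in which case monotonicity of $\varphi$ yields
$$
|\delta_l(n)\log\delta_l(n)| \;\le\; \frac{\tilde{c}}{l^2}\,\log\!\frac{l^2}{\tilde{c}}.
$$
The right-hand side is a summable series (comparable to $\sum (\log l)/l^2$). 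For the finitely many indices $l<l_0$ the trivial bound $|\delta_l(n)\log\delta_l(n)|\le 1/e$ suffices. Assembling these, I obtain a nonnegative summable sequence $\eta_l$, independent of $n$, such that $|\delta_l(n)\log\delta_l(n)|\le\eta_l$ for every $l\ge\ell$ and every $n\ge 2$.

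With the pointwise convergence and the uniform domination in hand, the Weierstrass M-test ensures that $\sum_{l=\ell}^{k}\delta_l(n)\log\delta_l(n)\to\sum_{l=\ell}^{\infty}\delta_l(n)\log\delta_l(n)$ as $k\to\infty$, uniformly in $n$. An application of the Moore--Osgood theorem, exactly as in Lemma~\ref{LMM:moore-osgood}, then permits swapping the two limits and concludes the proof. The only real subtlety is the non-monotonicity of $t\log t$; once that is bypassed by restricting attention to $l\ge l_0$ where the domination lands in the interval $[0,1/e]$, the argument is a direct transcription of the earlier lemmas.
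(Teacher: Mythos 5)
Your proposal is correct and follows essentially the same route as the paper: pointwise convergence in $n$ for each fixed $l$ plus a uniform summable majorant, then the Weierstrass M-test and the Moore--Osgood theorem, exactly as in Lemma~\ref{LMM:moore-osgood}. The only (minor) difference lies in how the majorant is produced: the paper uses both bounds of Lemma~\ref{LMM:ohranicenie-zhora}, exploiting the lower bound $\tilde c_0/l^2$ to get $-\delta_l(n)\log\delta_l(n)\le \tilde c\,(2\log l-\log\tilde c_0)/l^2$ for all $l\ge\ell$, whereas you reach an equivalent $(\log l)/l^2$-type dominating series from the upper bound alone, via the monotonicity of $-t\log t$ on $[0,1/e]$ for $l\ge l_0$ and the trivial bound $1/e$ for the finitely many remaining indices.
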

\begin{proof}
The proof is again the same as that of Lemma~\ref{LMM:moore-osgood},
just instead of $l\delta_l(n)$ write $-\delta_l(n)\log \delta_l(n)$ and instead of $l \dens (\kkk_l)$
write $-\dens (\kkk_l)\log \dens (\kkk_l)$, and use that,
for every $l\ge\ell$,
\begin{equation*}
	0
	\le
	-\delta_l(n)\log \delta_l(n)
	\le
	\frac{\tilde{c} (2\log l - \log \tilde{c}_0)}{l^2} \,,
\end{equation*}
where $\tilde{c}$ and $\tilde{c}_0$ are constants from Lemma~\ref{LMM:ohranicenie-zhora}
(notice that the last inequality is satisfied also in the case when $\delta_l(n)=0$
since $\tilde{c}_0\le 1\le l^2$).
\end{proof}

\subsection{Recurrence rate $\rr_\ell(x,\infty,\eps_0=1/2)$ and densities of $\kkk_l$ ($l\ge\ell$)} \label{SUBSECTION:RR-formula}
In this subsection
we give a formula for asymptotic recurrence rate $\rr_\ell(x,\infty,\eps_0)$
based on the densities of the sets $\kkk_l$ ($l\ge \ell$) defined in \eqref{EQ:Kell}. We start with some notation.
For an integer $n\ge 2$ and every $(i,j)\in[0,n)^2$ put
\begin{equation*}
	\ell_{ij}^n = \begin{cases}
		0 &\text{if there is no line in } \recplot(x,n,\eps_0) \text{ starting at } (i,j),
		\\
		\ell &\text{if there is a line in } \recplot(x,n,\eps_0) \text{ starting at } (i,j) \text{ of length }\ell,
	\end{cases}
\end{equation*}
and
\begin{equation*}
	\ell_{ij} = \begin{cases}
		0 &\text{if there is no line in } \recplot(x,\infty,\eps_0) \text{ starting at } (i,j),
		\\
		\ell &\text{if there is a line in } \recplot(x,\infty,\eps_0) \text{ starting at } (i,j) \text{ of length }\ell.
	\end{cases}
\end{equation*}
Clearly, for every $(i,j)\in[0,n)^2$, $\ell_{ij}^n\le \ell_{ij}$ and
\begin{equation}\label{EQ:len_ij}
\begin{split}
	&\ell_{ij}^n = \ell_{ij}
	\quad\text{for every line }(i,j,\ell_{ij}^n) \text{ in } \recplot(x,n,\eps_0)
	\text{ which is not }n\text{-boundary},	
\\
	&\ell_{ij}^n < \ell_{ij}	
	\quad\text{implies that }(i,j,\ell_{ij}^n) \text{ is an }n\text{-boundary line in }\recplot(x,n,\eps_0).
\end{split}
\end{equation}

\begin{proposition}\label{PROP:RR-and-densK}
	For every integer $\ell \geq 1$,  the recurrence rate $\rr_\ell(x,\infty,\eps)$
	exists and
	\begin{equation}\label{EQ:RR-and-densK}
		\rr_\ell(x,\infty,\eps_0)
		=
		\sum_{l=\ell}^{\infty}l \dens(\kkk_{l})
		> 0.
	\end{equation}
\end{proposition}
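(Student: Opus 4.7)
The plan is a direct counting argument. I would classify every length-$l$ line in $\recplot(x,n,\eps_0)$ into three types: \emph{strictly inner} ($\min\{i,j\}\ge 1$ and $\max\{i,j\}\le n-l-1$, which corresponds bijectively to $\kkk_l\cap[1,n-l)^2$); \emph{$n$-boundary} ($\min\{i,j\}\ge 1$ and $\max\{i,j\}=n-l$, arising from truncating an inner infinite line of length $l^*\ge l$ starting at some $(i,j)\in\kkk_{l^*}\cap[1,n)^2$); and \emph{$0$-boundary} (starting at $(0,j)$ or $(i,0)$). This yields
\begin{equation*}
	(n^2-n)\rr_\ell(x,n,\eps_0) = \sum_{l\ge\ell}l\card(\kkk_l\cap[1,n-l)^2) + E_n^{n\text{-bdry}} + E_n^{0\text{-bdry}},
\end{equation*}
and the goal is to show that, after dividing by $n^2-n$, the first term tends to $\sum_{l\ge\ell}l\dens(\kkk_l)$ and both error terms vanish.

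For the main term, for each fixed $l$ one has $\card(\kkk_l\cap[1,n-l)^2)/(n^2-n)\to\dens(\kkk_l)$ by the definition of density, and the sum is dominated uniformly in $n$ by $l\delta_l(n)\le\gamma_l$ (Lemmas~\ref{LMM:ohranicenie-zhora} and \ref{LMM:gama-is-finite}); hence a Moore--Osgood exchange of limit and sum, just as in Lemma~\ref{LMM:moore-osgood}, gives the required convergence. For $E_n^{n\text{-bdry}}$, since each truncated line contributes its truncated length $n-\max(i,j)\le l^*$, I obtain
\begin{equation*}
	E_n^{n\text{-bdry}} \le \sum_{l^*\ge\ell} l^*\card\big(\kkk_{l^*}\cap([1,n)^2\setminus[1,n-l^*)^2)\big),
\end{equation*}
and after dividing by $n^2-n$ this tends to $0$ because both $\sum l^*\delta_{l^*}(n)$ and the analogous sum with $[1,n-l^*)^2$ in place of $[1,n)^2$ tend to $\sum l\dens(\kkk_l)$ by the same Moore--Osgood argument.

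The main obstacle is bounding $E_n^{0\text{-bdry}}$. Writing $L(j)=\max\{l:x_{[0,l)}=x_{[j,j+l)}\}$ for the length of the $0$-boundary line starting at $(0,j)$ in the infinite plot, I would bound
\begin{equation*}
E_n^{0\text{-bdry}} \le 2\sum_{j=1}^{n-1}L(j)\mathbb{1}[L(j)\ge\ell] = 2(\ell-1)\card\{j\in[1,n):L(j)\ge\ell\} + 2\sum_{k\ge\ell}\card\{j\in[1,n):L(j)\ge k\},
\end{equation*}
and then apply unique ergodicity of the primitive subshift $X_\zeta$ together with the decay $\mu([x_{[0,k)}])=O(1/k)$ (a consequence of linear factor complexity for uniform primitive substitutions) to conclude $E_n^{0\text{-bdry}}=O(n\log n)=o(n^2)$. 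Positivity of the limit follows from Theorem~\ref{THM:density}: fixing any $\ell_0<\recog$ with $\kkk_{\ell_0}\ne\emptyset$, the recursion $\ell_k=q^k(\ell_0+c)-c$ supplies arbitrarily large $l\ge\ell$ with $\dens(\kkk_l)>0$, whence $\sum_{l\ge\ell}l\dens(\kkk_l)>0$.
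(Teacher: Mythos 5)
Your decomposition into strictly inner, $n$-boundary and $0$-boundary lines is sound, and two of the three parts are handled correctly. The main term is exactly the paper's route (Lemmas~\ref{LMM:ohranicenie-zhora}--\ref{LMM:moore-osgood}). Your treatment of the $n$-boundary term is actually a nice variant: writing it as $\sum_{l}l\delta_l(n)$ minus the analogous sum over $[1,n-l)^2$, both dominated by $\gamma_l$ and hence both converging to $\sum_l l\dens(\kkk_l)$ by the same Moore--Osgood argument, avoids the quantitative inputs the published proof imports from \cite[Propositions~4.1--4.3]{polakova2023symbolic} (the bound $lb_l(n)<8\recog n$ and the fact that only $O(\log n)$ distinct line lengths meet $[0,n)^2$).

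The genuine gap is the $0$-boundary estimate. Unique ergodicity gives $\tfrac1n\card\{j<n\colon x_{[j,j+k)}=x_{[0,k)}\}\to\mu([x_{[0,k)}])$ for each \emph{fixed} $k$, with an error $o(n)$ that is not uniform in $k$; in your sum $\sum_{k\ge\ell}\card\{j\in[1,n)\colon L(j)\ge k\}$ the index $k$ runs up to $\max_{j<n}L(j)$, which grows like $n$, so the per-$k$ asymptotics cannot be summed to yield $O(n\log n)$, nor even $o(n^2)$, without a bound valid for all $k,n$ simultaneously. Moreover, $\mu([x_{[0,k)}])=O(1/k)$ is not a consequence of linear factor complexity (e.g.\ Sturmian subshifts with unbounded partial quotients are minimal, uniquely ergodic, of complexity $k+1$, yet have $k$-cylinders of measure far exceeding $1/k$); it holds here because primitive substitution subshifts are \emph{linearly recurrent}, i.e.\ every $k$-word has return times at least $k/K$, and it is precisely this return-time bound that gives what your argument actually needs: $\card\{j<n\colon L(j)\ge k\}\le Kn/k+1$ and $L(j)\le Kj$, whence $O(n\log n)$. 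So the conclusion is right, but the tools you invoke do not deliver it; you must either cite the linear-recurrence estimates (this is what \cite[Propositions~4.1--4.3]{polakova2023symbolic}, used in the paper's proof, encode) or repair the step more softly: a finite-plot $0$-boundary line at $(0,j)$ has length at most $n-j$, so $E_n^{0\text{-bdry}}\le 2\sum_{j<n}\min\{L(j),n\}$, and splitting at a fixed threshold $K$ gives $E_n^{0\text{-bdry}}/n^2\le 2K/n+\tfrac2n\card\{j<n\colon L(j)>K\}$; unique ergodicity applied to the single word $x_{[0,K+1)}$ and aperiodicity (so $\mu([x_{[0,K+1)}])\to0$ as $K\to\infty$) then give $E_n^{0\text{-bdry}}=o(n^2)$ with no rate at all.
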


\begin{proof}
Fix $\ell \geq 1$. The fact that the sum in \eqref{EQ:RR-and-densK} is positive follows from Theorem~\ref{THM:density}; thus it suffices to prove the equality in \eqref{EQ:RR-and-densK}.
Recall from Subsection~\ref{SUBS:RQA} the definitions of $\linedens_{l} (x,n, \eps_0)$ and
$$
	\rr_\ell(x,\infty,\eps_0) 
	= 
	\lim\limits_{n \to \infty} \rr_\ell(x,n,\eps_0) 
	= 
	\lim\limits_{n \to \infty} \sum_{l \geq \ell} l\linedens_{l} (x,n, \eps_0)
$$
(provided the limits exist).
By Lemma~\ref{LMM:moore-osgood} it suffices to show that
\begin{equation}\label{EQ:RR-and-densK:limit0}
	\lim\limits_{n \to \infty} \sum_{l \geq \ell} 
		l \cdot 
		\big|
			\linedens_{l} (x,n, \eps_0)
			- \delta_l(n)
		\big|
	= 0.
\end{equation}

Before diving into the proof, we introduce some notation for every $n\ge 2$ and $l\ge\ell$.
Let $b_l(n)$ denote the number of $n$-boundary lines $(i,j,\ell_{ij}^n)$ in $\recplot(x,n,\eps_0)$ such that $\ell_{ij}=l$; clearly, for $l<n$, 
$$ 
	b_l(n) = \card\Big(
		(\kkk_l\sqcup\kkk_l^0) 
		\cap 
		\big([0, n)^2 \backslash [0, n-l)^2 \big)
	\Big).
$$
Let $S_l(n)$ be the number of recurrences 
in $\recplot(x,n,\eps_0)$
in lines of length $l$,
and 
$T_l(n)$ be the number of recurrences 
in $\recplot(x,\infty,\eps_0)$
in lines of length $l$ starting in $[0,n)^2$; that is,
\begin{eqnarray*}
	S_l(n) 
	&=& 
	l\cdot \numberofdiag_{l}(x, n, \eps_0)
	=
	l(n^2-n) \linedens_l(x,n,\eps_0)
	=
	l\cdot \card\{
		(i,j)\in[0,n)^2\colon
		\ell_{ij}^n=l
	\},
\\
	T_l(n) 
	&=& 
	l(n^2-n)  \big(\delta_l(n) + \delta_l^0(n)\big)
	=
	l\cdot \card\{
		(i,j)\in[0,n)^2\colon
		\ell_{ij}=l
	\},
\end{eqnarray*}
where $\delta_l(n)$ was defined in Lemma~\ref{LMM:ohranicenie-zhora} and 
$\delta_l^0(n)$ is defined analogously by 
$$
	\delta_l^0(n) = \dfrac{\card (\kkk_l^0 \cap [0, n)^2)}{n^{2} - n} \, .
$$
To prove the proposition, we proceed in several steps.

\smallskip
\textit{Step~1.} We first show that, for every $n\ge 2$,
\begin{equation}\label{EQ:RR-and-densK:T-S} 
	\sum_{l\ge\ell}  \abs{T_l(n) - S_l(n)}
	\ \le\ 
	\sum_{l\ge\ell} 2lb_l(n).
\end{equation}
\smallskip

The fact that $\ell_{ij}^n\le \ell_{ij}$ implies
\begin{equation*}
\begin{split}
	T_l(n) - S_l(n)
	&=
	l\cdot \card\{
		(i,j)\in[0,n)^2\colon
		\ell_{ij} = l > \ell_{ij}^n
	\}
\\
	&-
	l\cdot \card\{
		(i,j)\in[0,n)^2\colon
		\ell_{ij} > l = \ell_{ij}^n
	\}.
\end{split}
\end{equation*}
Hence,
\begin{eqnarray*}
	T_l(n) - S_l(n)
	&=&
	\sum_{\ell_{ij} = l > \ell_{ij}^n}  \ell_{ij}
	-
	\sum_{\ell_{ij} > l = \ell_{ij}^n}  \ell_{ij}^n
\\	
	\abs{T_l(n) - S_l(n)}
	&\le&
	\sum_{\ell_{ij} = l > \ell_{ij}^n}  \ell_{ij}
	+
	\sum_{\ell_{ij} > l = \ell_{ij}^n}  \ell_{ij}^n
\\	
	\sum_{l\ge\ell} \abs{T_l(n) - S_l(n)}
	&\le&
	\sum_{\ell_{ij} \ge \ell,\ \ell_{ij} > \ell_{ij}^n}  \ell_{ij}
	+
	\sum_{\ell_{ij} > \ell_{ij}^n \ge \ell} \ell_{ij}^n
\\
	&\le&
	\sum_{\ell_{ij} \ge \ell,\ \ell_{ij} > \ell_{ij}^n}  \ell_{ij}
	+
	\sum_{\ell_{ij} > \ell_{ij}^n \ge \ell} \ell_{ij}
\\	
	&\le&
	2 \sum_{\ell_{ij} \ge \ell,\ \ell_{ij} > \ell_{ij}^n}  \ell_{ij}
\\	
	&=&
	2 \sum_{l\ge\ell} \sum_{\ell_{ij}=l > \ell_{ij}^n}  \ell_{ij}\,,
\end{eqnarray*}
where the summations on the right
are over all pairs $(i,j)\in[0,n)^2$ satisfying the conditions.
Now \eqref{EQ:RR-and-densK:T-S} follows 
easily using the second part of \eqref{EQ:len_ij}.

\smallskip
\textit{Step~2.} By \cite[Proposition~4.3]{polakova2023symbolic},
for every $l\ge 1$ and $n\ge 2$,
\begin{equation}\label{EQ:RR-and-densK:num_nboundary}
	lb_l(n) < 8\recog n.
\end{equation}

\smallskip
\textit{Step~3.} Finally we prove \eqref{EQ:RR-and-densK:limit0}, from which the proposition follows.
\smallskip

The inequalities \eqref{EQ:RR-and-densK:T-S} and \eqref{EQ:RR-and-densK:num_nboundary},
together with \cite[Propositions~4.1 and 4.2]{polakova2023symbolic}, imply
\begin{equation}\label{EQ:RR-and-densK:main}
\begin{split}
		\sum_{l \geq \ell} 
			l \cdot \big|
				\linedens_{l} (x,n, \eps_0)
				- \delta_l(n) - \delta_l^0(n)
			\big|
	&=
	\frac{1}{n^2-n} \sum_{l \geq \ell} \abs{S_l(n)-T_l(n)}
\\
	&<
	\frac{16\recog n}{n^2-n} \cdot \card \{l\geq \ell\colon b_l(n) > 0\}
\\
	&<
	\frac{32(\recog-1)\recog (1+\log_q n)}{n-1}\,
\end{split}
\end{equation}
which converges to $0$ as $n \to \infty$.
Thus, to prove \eqref{EQ:RR-and-densK:limit0}, it remains to show that
\begin{equation}\label{EQ:RR-and-densK:0-boundary}
	\lim\limits_{n\to\infty}
		\sum_{l \geq \ell}  l \delta_l^0(n)
	= 0.
\end{equation}
But this is easy, since the trivial inequality $\card(\kkk_l^0\cap[0,n)^2) \le 2(n-1)$
and \cite[Proposition~4.2]{polakova2023symbolic} imply
\begin{equation*}
	\Big|
		\sum_{l \geq \ell}  l \delta_l^0(n)
	\Big|
	\le
	\frac{2(n-1)}{n^2-n} \card\{l\ge\ell\colon \kkk_l^0\cap[0,n)^2\ne\emptyset\}
	\le\frac{2}{n}(\recog-1)(1+\log_q n).
\end{equation*}
\end{proof}

\subsection{Other recurrence quantifiers for $\eps_0$ and densities of $\kkk_l$} \label{SUBSECTION:LAVG-ENT-formula}
Now we are going to prove formulas analogous to that from Proposition~\ref{PROP:RR-and-densK}
for other recurrence quantifiers.

\begin{proposition}\label{PROP:linedens-and-densK}
	For every integer $\ell \geq 1$,  $\linedens_\ell(x,\infty,\eps_0)$ and
	$\lineDens_\ell(x,\infty,\eps_0)$ 
	exist and 
	\begin{equation}\label{EQ:linedens-and-densK}
		\linedens_\ell(x,\infty,\eps_0)
		=
		\dens(\kkk_{\ell})
	\quad\text{and}\quad
		\lineDens_\ell(x,\infty,\eps_0)
		=
		\sum_{l=\ell}^{\infty}\dens(\kkk_{l})
		> 0;
	\end{equation}
	further, $\dens(\kkk_{\ell})>0$ for infinitely many $\ell$.
\end{proposition}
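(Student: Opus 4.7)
The plan is to mirror the argument used for Proposition~\ref{PROP:RR-and-densK}, but without the extra factor of $l$ which made that proof delicate; here both estimates become easier, and the main work is already done in the auxiliary lemmas. Recall that
\[
\linedens_\ell(x,n,\eps_0) = \frac{\numberofdiag_\ell(x,n,\eps_0)}{n^2-n},
\quad
\delta_\ell(n) = \frac{\card(\kkk_\ell\cap[1,n)^2)}{n^2-n},
\]
and that $\delta_\ell(n)\to \dens(\kkk_\ell)$ by definition, while $\sum_{l\ge\ell}\delta_l(n)\to\sum_{l\ge\ell}\dens(\kkk_l)$ by Lemma~\ref{LMM:moore-osgood-linedens}. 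Hence both equalities reduce to showing that the difference between $\linedens_\ell$-quantities and $\delta_\ell$-quantities vanishes as $n\to\infty$.

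For the first equality I would argue that
\[
\bigl|\linedens_\ell(x,n,\eps_0) - \delta_\ell(n) - \delta_\ell^0(n)\bigr|
\le \frac{2b_\ell(n)}{n^2-n},
\]
where $\delta_\ell^0(n)$ and $b_\ell(n)$ count $0$-boundary and $n$-boundary lines as in the proof of Proposition~\ref{PROP:RR-and-densK}; the right-hand side tends to $0$ because $b_\ell(n)\le 8\recog n/\ell$ by \cite[Proposition~4.3]{polakova2023symbolic}, and $\delta_\ell^0(n)\to 0$ since $\card(\kkk_\ell^0\cap[0,n)^2)\le 2(n-1)$. This gives $\linedens_\ell(x,\infty,\eps_0)=\dens(\kkk_\ell)$.

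For the second equality the same inequality is summed over $l\ge\ell$. Exactly as in the step leading to \eqref{EQ:RR-and-densK:main} (but without the multiplicative factor $l$) one obtains
\[
\sum_{l\ge\ell}\bigl|\linedens_l(x,n,\eps_0) - \delta_l(n) - \delta_l^0(n)\bigr|
\ \le\ \frac{C(1+\log_q n)}{n-1}
\]
for a constant $C$ depending only on $\recog$, using \cite[Propositions~4.1 and~4.2]{polakova2023symbolic} to bound the number of $l$ for which $n$-boundary lines of length $l$ can appear. Together with the bound $\sum_{l\ge\ell}\delta_l^0(n)\le 2(\recog-1)(1+\log_q n)/n\to 0$ and Lemma~\ref{LMM:moore-osgood-linedens}, this yields $\lineDens_\ell(x,\infty,\eps_0)=\sum_{l\ge\ell}\dens(\kkk_l)$.

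Finally, positivity of $\sum_{l\ge\ell}\dens(\kkk_l)$ is free: Proposition~\ref{PROP:RR-and-densK} gives $\sum_{l\ge\ell} l\dens(\kkk_l) > 0$, so some $\dens(\kkk_l)$ is positive. That there are infinitely many such $\ell$ follows from Theorem~\ref{THM:density}: such an $l$ with positive density produces (by case~(2) applied in the forward direction) a sequence of lengths $\ell_k = q^k(\ell_0+c)-c$ for some $\ell_0<\recog$, all with $\dens(\kkk_{\ell_k}) = q^{-2k}\dens(\kkk_{\ell_0})>0$. The only genuine obstacle is bookkeeping the boundary corrections uniformly in $\ell$; this is already absorbed in the cited bounds from \cite{polakova2023symbolic}, so no new technical difficulty arises.
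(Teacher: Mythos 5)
Your proposal is correct and is essentially the paper's own argument: the paper just reuses the already-established limit \eqref{EQ:RR-and-densK:limit0}, noting that since $l\ge 1$ the weighted sum dominates both $|\linedens_\ell-\delta_\ell|$ and $\sum_{l\ge\ell}|\linedens_l-\delta_l|$, and then concludes via the definition of density and Lemma~\ref{LMM:moore-osgood-linedens} — exactly the ingredients you re-derive without the factor $l$. Two cosmetic points: with the paper's definition of $b_\ell(n)$ (which counts $n$-boundary lines by their infinite length $\ell_{ij}=\ell$) your single-$\ell$ inequality misses finite lines of length exactly $\ell$ that continue past $n$, but the discrepancy is still bounded by the total number of $n$-boundary lines, which is $O(n)$ (or simply dominated by your summed estimate), so nothing breaks; and the infinitude of $\ell$ with $\dens(\kkk_\ell)>0$ already follows from your first observation that $\sum_{l\ge\ell} l\,\dens(\kkk_l)>0$ for \emph{every} $\ell$ (Proposition~\ref{PROP:RR-and-densK}), so the appeal to a ``forward direction'' of Theorem~\ref{THM:density}, which that theorem does not literally assert, is unnecessary.
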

\begin{proof}
The fact that the sum in \eqref{EQ:linedens-and-densK} is positive and that $\dens(\kkk_\ell)>0$ for infinitely many $\ell$ follows from Theorem~\ref{THM:density}.

Equation \eqref{EQ:RR-and-densK:limit0} from the proof of Proposition~\ref{PROP:RR-and-densK} implies that
\begin{equation*}
	\lim\limits_{n \to \infty} 
		\big|
			\linedens_{\ell} (x,n, \eps_0)
			- \delta_\ell(n)
		\big|
	\le	
	\lim\limits_{n \to \infty} \sum_{l \geq \ell} 
		\big|
			\linedens_{l} (x,n, \eps_0)
			- \delta_l(n)
		\big|
	= 0.
\end{equation*}
Since $\lim_n \delta_\ell(n)=\dens(\kkk_\ell)$, we have
that the limit
$\lim_n \linedens_{\ell} (x,n, \eps_0)$
exists and is equal to $\dens(\kkk_\ell)$; thus we obtained the formula for $\linedens_\ell(x,\infty,\eps_0)$.
Further, $\lim_n \sum_{l\ge\ell} \delta_l(n)=\sum_{l\ge\ell} \dens(\kkk_{l})$ by Lemma~\ref{LMM:moore-osgood-linedens}, hence
the limit
$\lim_n \sum_{l\ge\ell} \linedens_{l} (x,n, \eps_0)$ exists and
and is equal to the previous one;
thus we also proved the formula for $\lineDens_\ell(x,\infty,\eps_0)$.
\end{proof}

Clearly, Propositions~\ref{PROP:RR-and-densK} and \ref{PROP:linedens-and-densK} enable us to calculate also asymptotic determinism $\deter_\ell(x,\infty,\eps_0)$ and average line length
$\lavg_\ell(x,\infty,\eps_0)$ via densities of the sets $\kkk_l$. 
The following is an immediate corollary of Theorem~\ref{THM:rr-and-lavg-via-corsum} and 
Propositions~\ref{PROP:RR-and-densK} and \ref{PROP:linedens-and-densK}.

\begin{proposition}\label{PROP:corsum-and-densK}
	For every integer $\ell \geq 1$,  $\corsum_\ell(x,\infty,\eps_0)$ exists and 
	\begin{equation*}
		\corsum_\ell(x,\infty,\eps_0)
		=
		\sum_{l=\ell}^{\infty} (l-\ell+1)\dens(\kkk_{l})
		> 0.
	\end{equation*}
\end{proposition}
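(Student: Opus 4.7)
The plan is to invoke the ``inverse'' relation from Proposition~\ref{PROP:corsum-and-rr}, take $n\to\infty$, and substitute the closed-form expressions for $\rr_\ell$ and $\lineDens_\ell$ supplied by Propositions~\ref{PROP:RR-and-densK} and \ref{PROP:linedens-and-densK}.

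First I would apply Proposition~\ref{PROP:corsum-and-rr} with $\eps=\eps_0$ to write
\begin{equation*}
    \corsum_\ell(x,n,\eps_0)
    = \frac{n-1}{n}\bigl[\rr_\ell(x,n,\eps_0) - (\ell-1)\lineDens_\ell(x,n,\eps_0)\bigr] + \delta_\ell^\corsum,
\end{equation*}
where $|\delta_\ell^\corsum|<2\ell/n$. By Proposition~\ref{PROP:RR-and-densK}, the limit $\rr_\ell(x,\infty,\eps_0)=\sum_{l\ge\ell} l\dens(\kkk_l)$ exists, and by Proposition~\ref{PROP:linedens-and-densK}, the limit $\lineDens_\ell(x,\infty,\eps_0)=\sum_{l\ge\ell}\dens(\kkk_l)$ exists. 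Since $(n-1)/n\to 1$ and $\delta_\ell^\corsum\to 0$, letting $n\to\infty$ shows that $\corsum_\ell(x,\infty,\eps_0)$ exists and equals $\rr_\ell(x,\infty,\eps_0)-(\ell-1)\lineDens_\ell(x,\infty,\eps_0)$ (this is precisely the ``Further'' part of Theorem~\ref{THM:rr-and-lavg-via-corsum}).

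Next I would substitute the two series into this identity and combine coefficients term by term:
\begin{equation*}
    \corsum_\ell(x,\infty,\eps_0)
    = \sum_{l=\ell}^{\infty} l\dens(\kkk_l) - (\ell-1)\sum_{l=\ell}^{\infty}\dens(\kkk_l)
    = \sum_{l=\ell}^{\infty} (l-\ell+1)\dens(\kkk_l).
\end{equation*}
Both series converge absolutely (the bound $l\dens(\kkk_l)\le\gamma_l$ from Lemma~\ref{LMM:ohranicenie-zhora} and the summability of $(\gamma_l)$ from Lemma~\ref{LMM:gama-is-finite} justify the termwise recombination), so this manipulation is legitimate.

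Finally, positivity is immediate: by Theorem~\ref{THM:density} (or by the last assertion of Proposition~\ref{PROP:linedens-and-densK}) there exist infinitely many $l\ge\ell$ with $\dens(\kkk_l)>0$, and for each such $l$ the coefficient $l-\ell+1\ge 1$ is strictly positive. There is no real obstacle in this proof; the content has already been absorbed into the earlier propositions, so this statement is genuinely a one-line consequence once the substitution is carried out.
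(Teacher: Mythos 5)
Your proof is correct and follows essentially the same route as the paper, which treats the statement as an immediate corollary of Theorem~\ref{THM:rr-and-lavg-via-corsum} (itself obtained by letting $n\to\infty$ in Proposition~\ref{PROP:corsum-and-rr}, exactly as you do) together with Propositions~\ref{PROP:RR-and-densK} and \ref{PROP:linedens-and-densK}. Unwinding the theorem back to the finite-$n$ identity even gives you the existence of $\corsum_\ell(x,\infty,\eps_0)$ directly, rather than via unique ergodicity, but this is only a cosmetic difference.
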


The final result deals with the formula for entropy of line lengths $\ent_\ell(x,\infty,\eps_0)$.

\begin{proposition}\label{PROP:ent-and-densK}
	For every integer $\ell \geq 1$,  the entropy of line lengths $\ent_\ell(x,\infty,\eps_0)$ 
	exists and 
	\begin{equation*}
		\ent_\ell(x,\infty,\eps_0)
		=
		\log\Big(
		  \sum_{l=\ell}^{\infty}\dens(\kkk_{l})
		\Big)
		-
		\frac{1}{\sum_{l=\ell}^{\infty}\dens(\kkk_{l})}
		\sum_{l=\ell}^{\infty}\dens(\kkk_{l})\log \dens(\kkk_{l})
		> 0.
	\end{equation*}
\end{proposition}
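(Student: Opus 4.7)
The plan is to pass to the limit as $n\to\infty$ in the definition
\[
	\ent_\ell(x,n,\eps_0)
	= \log \lineDens_\ell(x,n,\eps_0)
	- \frac{1}{\lineDens_\ell(x,n,\eps_0)} \sum_{l\ge\ell} \linedens_l(x,n,\eps_0) \log \linedens_l(x,n,\eps_0),
\]
in the spirit of the proof of Proposition~\ref{PROP:RR-and-densK}. By Proposition~\ref{PROP:linedens-and-densK}, $\lineDens_\ell(x,n,\eps_0) \to S := \sum_{l\ge\ell} \dens(\kkk_l)$ and $S>0$, so $\log\lineDens_\ell(x,n,\eps_0)\to\log S$ and $1/\lineDens_\ell(x,n,\eps_0)\to 1/S$. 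Hence the whole proposition reduces to showing
\[
	\lim_{n\to\infty}\sum_{l\ge\ell} \linedens_l(x,n,\eps_0)\log\linedens_l(x,n,\eps_0)
	= \sum_{l\ge\ell}\dens(\kkk_l)\log\dens(\kkk_l). \qquad(*)
\]

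Since Lemma~\ref{LMM:moore-osgood-ent} gives $(*)$ with $\linedens_l(x,n,\eps_0)$ replaced by $\delta_l(n)$, it is enough to verify that
\[
	\sum_{l\ge\ell} \bigl| \linedens_l(x,n,\eps_0)\log\linedens_l(x,n,\eps_0) - \delta_l(n)\log\delta_l(n)\bigr| \longrightarrow 0.
\]
I would bound each summand via the mean value theorem applied to $\phi(t)=t\log t$ (noting $|\phi'(t)|=|1+\log t|$) and then feed in the three uniform estimates already established in the paper: the $L^1$-type inequality \eqref{EQ:RR-and-densK:main} bounding $\sum_l |\linedens_l(x,n,\eps_0)-\delta_l(n)-\delta_l^0(n)|$, the pointwise bound $\delta_l(n)\le \tilde c/l^2$ from Lemma~\ref{LMM:ohranicenie-zhora} (with the matching lower bound $\delta_l(n)\ge\tilde c_0/l^2$ when nonzero, which controls $|\log\delta_l(n)|$ by $O(\log l)$), and the fact from \cite[Propositions~4.1 and 4.2]{polakova2023symbolic} that $\delta_l^0(n)$ is nonzero for only $O(\log n)$ values of $l$, each of size $O(1/n)$. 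Together these yield a uniform summable dominant of the order $O(\log l/l^2)$, and the Weierstrass M-test argument used in Lemmas~\ref{LMM:moore-osgood}--\ref{LMM:moore-osgood-ent} carries through verbatim.

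The strict positivity of the limit is the easy part: writing $p_l=\dens(\kkk_l)/S$ for $l\ge\ell$, the stated formula is exactly the Shannon entropy $-\sum_{l\ge\ell} p_l\log p_l$ of the probability distribution $(p_l)_{l\ge\ell}$; by Theorem~\ref{THM:density}, $\dens(\kkk_l)>0$ for infinitely many $l$, so $(p_l)_{l\ge\ell}$ is supported on infinitely many integers and hence is nondegenerate, forcing the entropy to be strictly positive.

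The main technical obstacle is the interplay between the logarithmic singularity of $t\mapsto t\log t$ at $0$ and the three separate sources contributing to $\linedens_l(x,n,\eps_0)$ (inner, $0$-boundary, and $n$-boundary lines). Just as in the proofs of Propositions~\ref{PROP:RR-and-densK} and \ref{PROP:linedens-and-densK}, a careful splitting of the sum---combined with the bookkeeping of how many $l$'s actually contribute through $\delta_l^0(n)$ and through $n$-boundary truncations---is needed to secure the uniform summable dominant and thereby legitimize the exchange of limit and sum required in $(*)$.
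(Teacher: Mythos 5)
Your overall skeleton coincides with the paper's: reduce, via Proposition~\ref{PROP:linedens-and-densK} and Lemma~\ref{LMM:moore-osgood-ent}, to showing $\sum_{l\ge\ell}\abs{\psi(\linedens_l(x,n,\eps_0))-\psi(\delta_l(n))}\to 0$ for $\psi(t)=t\log t$, and settle positivity by noting the formula is the Shannon entropy of a distribution with infinite support (this last part is fine and is exactly the paper's argument). The gap is in how you tame the logarithmic singularity. The mean value theorem bounds each summand by $\abs{\linedens_l(x,n,\eps_0)-\delta_l(n)}\cdot(1+\abs{\log\min\{\linedens_l(x,n,\eps_0),\delta_l(n)\}})$, and your proposed control of the log factor by $O(\log l)$ only uses the two-sided bound $\tilde c_0/l^2\le\delta_l(n)\le\tilde c/l^2$ of Lemma~\ref{LMM:ohranicenie-zhora}, i.e.\ it controls $\log\delta_l(n)$ but not $\log\linedens_l(x,n,\eps_0)$. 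The finite-plot density $\linedens_l(x,n,\eps_0)$ also contains $0$-boundary and, crucially, truncated $n$-boundary lines, whose contribution for an individual $l$ is not $O(1/l^2)$; for instance a single truncated line gives $\linedens_l(x,n,\eps_0)=1/(n^2-n)$ for a length $l$ at which $\delta_l(n)$ may vanish, and several boundary lines of equal truncated length can push $\linedens_l$ up to order $1/n$. For the same reason the claimed ``uniform summable dominant of order $O(\log l/l^2)$'' does not follow from the three estimates you list (they control the boundary terms only in aggregate, in the $\ell^1$ sense and via counts of admissible lengths, not pointwise in $l$), so the Weierstrass M-test/dominated-convergence step does not carry through as stated.

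The missing ingredient is elementary and is what the paper uses instead of any pointwise dominant: both $\linedens_l(x,n,\eps_0)$ and $\delta_l(n)$ are integer multiples of $1/(n^2-n)$, hence whenever nonzero they are at least $1/(n^2-n)$, so the mean-value factor is at most $2\log n+1$ uniformly in $l$ (and the inequality persists when one of the two quantities vanishes). Multiplying this by the $\ell^1$ estimates you already cite --- \eqref{EQ:RR-and-densK:main} together with \eqref{EQ:RR-and-densK:0-boundary}, which give $\sum_{l\ge\ell}\abs{\linedens_l(x,n,\eps_0)-\delta_l(n)}=O((\log n)/n)$ --- yields a total bound of order $(\log n)^2/n\to 0$, which closes the argument. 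With this replacement your proof becomes essentially the paper's; without it, the step exchanging limit and sum for the $\psi$-terms is not justified.
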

\begin{proof}
	Define $\psi(x)=x\log(x)$ for $x\ge 0$ (with the convention $0\log 0$ applied); 
	by Lagrange mean value theorem, 
	\begin{equation}\label{EQ:ent-and-densK:psi}
		\abs{\psi(b) - \psi(a)} 
		\le
		\abs{b-a} \cdot \big(- \log(\min\{a,b\}) + 1  \big)
	\end{equation}
	for every $a,b\in [0,1]$.
	
	By the definition of $\ent_\ell$, Proposition~\ref{PROP:linedens-and-densK} and Lemma~\ref{LMM:moore-osgood-ent}, it suffices to show that
	\begin{equation}\label{EQ:ent-and-densK:main}
		\lim_{n\to\infty} \sum_{l\ge\ell} 
		\big|
			\psi(\linedens_l(x,n,\eps_0)) 
			- 
			\psi(\delta_l(n))
		\big|
		\ = \ 0.
	\end{equation}
	Fix $n\ge 2$ and recall the definitions of $\linedens_l(x,n,\eps_0)$ and $\delta_l(n)$.
	If both these quantities are nonzero, they are
	larger than or equal to $1/(n^2-n)$ and so, by \eqref{EQ:ent-and-densK:psi},
	\begin{equation*}
		\big|
			\psi(\linedens_l(x,n,\eps_0)) 
			- 
			\psi(\delta_l(n))
		\big|		
		\le 
		\big|
			\linedens_l(x,n,\eps_0)
			- 
			\delta_l(n)
		\big|		
		\cdot (2\log n + 1)   \,.
	\end{equation*}
	This inequality remains true also in the case when at least one of $\linedens_l(x,n,\eps_0)$ and $\delta_l(n)$ is zero.
	Hence
	\begin{equation*}
		\sum_{l\ge \ell}\big|
			\psi(\linedens_l(x,n,\eps_0)) 
			- 
			\psi(\delta_l(n))
		\big|		
		\le 
		(2\log n + 1) \sum_{l\ge \ell}\big|
			\linedens_l(x,n,\eps_0)
			- 
			\delta_l(n)
		\big|		  \,.
	\end{equation*}
	To obtain \eqref{EQ:ent-and-densK:main}, it suffices to employ \eqref{EQ:RR-and-densK:main} and \eqref{EQ:RR-and-densK:0-boundary} from Step~3 of the proof of Proposition~\ref{PROP:RR-and-densK} .
	
	It remains to show that $\ent_\ell(\infty,\eps_0)$ is strictly positive. 
	For every $l\ge\ell$ put 
	$z_l=\dens(\kkk_l) / \big(\sum_{l\ge\ell}\dens(\kkk_l)\big)$, and 
	notice that $z_l\ge 0$ and $\sum_{l\ge\ell} z_l=1$.
	By Proposition~\ref{PROP:linedens-and-densK}, $z_l>0$ for infinitely many $l\ge\ell$.
	Thus $0<z_l<1$ for some $l\ge\ell$ and so
	$\sum_{l\ge\ell} \psi(z_l) < 0$.
\end{proof}

\subsection{Recurrence quantifiers for general $\eps$ and embedding dimension $m$: proof of Theorem~\ref{T:formulas-via-infinite-sums}}

\begin{proof}[Proof of Theorem~\ref{T:formulas-via-infinite-sums}]
We may assume that $\zeta$ satisfy also \eqref{assumpt-start-with-0}.
Indeed, if $\zeta(0)$ does not start with $0$, two cases can happen. First, if $\zeta(1)$ starts with $1$,
we can ``switch'' $0$ with $1$; this has no effect on the recurrence plot and recurrence quantifiers.
Second, if $\zeta(1)$ starts with $0$, we can take $\zeta^2$ instead of $\zeta$. Clearly, $\zeta^2$ is primitive and satisfies \eqref{assumpt-start-with-0}; further, by \cite[Proposition~5.4]{queffelec2010substitution},
$X_{\zeta^2}=X_\zeta$ and so $\zeta^2$ is aperiodic.

By Remark~\ref{REM:rr-and-lavg-via-corsum}, it suffices to prove the formulas only for $y=x=\zeta^\infty(0)$. 
We will prove only the formula for recurrence rate; 
the formulas for $\linedens_\ell$, $\lineDens_\ell$ and $\ent_\ell$ can be obtained analogously,
and the formula for $\corsum_\ell$ follows from Theorem~\ref{THM:rr-and-lavg-via-corsum}.

Propositions~\ref{PROP:embeddedRecPlot} and \ref{PROP:recPlot-generalEps} yield
(for abbreviation, put $m'=m+h-2$ and $n'=n+m'$)
\begin{eqnarray*}
	\rr_\ell(x^{(m)},\infty,2^{-h}) 
	&=& 
	\lim_{n\to\infty}  \rr_\ell(x^{(m)},n,2^{-h}) 
	= 
	\lim_{n\to\infty}  \rr_\ell(x,n,2^{-m-h+1}) 
\\
	&=&
	\lim_{n\to\infty} 
	\frac{(n')^2-n'}{n^2-n} \,\cdot\,
	\big[ \rr_{\ell'}(x,n',\eps_0)  -  m' \lineDens_{\ell'}(x,n',\eps_0)   \big]
\\
	&=&
	\rr_{\ell'}(x,\infty,\eps_0)  -  m' \lineDens_{\ell'}(x,\infty,\eps_0) \,.
\end{eqnarray*}
Now it suffices to use Propositions~\ref{PROP:RR-and-densK} and \ref{PROP:linedens-and-densK}.
\end{proof}

\subsection{Explicit formulas for recurrence quantifiers}

Since densities of the sets $\kkk_\ell$ for $\ell\ge\recog$ can be obtained from those of the sets
$\kkk_{\ell_0}$ for $\ell_0<\recog$ (see Theorem~\ref{THM:density}), infinite sums in Theorem~\ref{T:formulas-via-infinite-sums}
can be reduced to closed-form formulas for recurrence quantifiers.
To state them we introduce some notation. 
Let $\recog_0$ be the smallest positive integer such that
\begin{equation*}
	\recog_0 q + (\alpha+\beta) \ge\recog. 
\end{equation*}
For every integer $\ell\in[\recog_0,\recog)$ put
\begin{eqnarray*}
	\nu_\ell^{\numberofdiag} 
	&=&
	\sum_{l\in[\recog_0, \ell)} \dens(\kkk_l),
\\	
	\nu_\ell^{\rr} 
	&=&
	\sum_{l\in[\recog_0, \ell)} l \dens(\kkk_l),
\\	
	\nu_\ell^{\ent} 
	&=&
	- \sum_{l\in[\recog_0, \ell)} \dens(\kkk_l) \log\dens(\kkk_l);	
\end{eqnarray*}
further, for every $\omega\in\{\numberofdiag,\rr,\ent\}$ put
\begin{equation*}
	\tilde\nu_\ell^\omega = \nu_\recog^\omega - \nu_\ell^\omega.
\end{equation*}
For any integer $\ell\ge\recog_0$ let $j=j(\ell)\in\NNN_0$ be the smallest integer such that $(\recog-1)q^j + c(q^j-1)\ge\ell$,
and let $\ell_0=\ell_0(\ell)\in[\recog_0,\recog)$ be the smallest integer such that
\begin{equation}\label{EQ:formulas-def-j-ell0}
	\ell_0 q^j + c(q^j - 1)\ge\ell;
\end{equation}
recall that $c=(\alpha+\beta)/(q-1)\in[0,1]$.

\begin{theorem}\label{T:formulas}
	Let $\zeta$ be a primitive aperiodic binary substitution of constant length.
	Fix any $y\in \Sigma$, $m,\ell\in\NNN$ and $\eps=2^{-h}$ ($h\in\NNN$); put
	$\ell'=\ell+m+h-2$.
	If $\ell'\ge\recog_0$, then
	\begin{eqnarray*}
		\linedens_\ell(y^{(m)},\infty,\eps) 
		&=&
		q^{-2j}\dens(\kkk_{\ell_0}),
	\\ 
		\lineDens_\ell(y^{(m)},\infty,\eps) 
		&=&
		\frac{1}{q^{2j}(q^2-1)} \big[
			\nu_{\ell_0}^{\numberofdiag}   
			+ q \tilde\nu_{\ell_0}^{\numberofdiag}
		\big],
	\\ 
		\rr_\ell(y^{(m)},\infty,\eps) 
		&=&
		\frac{1}{q^{j}(q-1)} \big[
			\nu_{\ell_0}^{\rr}   
			+ q \tilde\nu_{\ell_0}^{\rr}
			\ +\ 
			c(\nu_{\ell_0}^{\numberofdiag}   
			+ q \tilde\nu_{\ell_0}^{\numberofdiag})
		\big]
	\\
		&&-c(m+h-2)\cdot\lineDens_\ell(y^{(m)},\infty,\eps);
	\end{eqnarray*}
	the corresponding formulas for recurrence determinism, average line length and correlation sum readily follows.
	
	If $\ell'<\recog_0$, the formulas for these recurrence quantifiers can be obtained from those for $\ell+m+h-2=\recog_0$
	and from the following recurrent relations (where, on the right hand side, we omit the arguments $y^{(m)}$, $\infty$ and $\eps$):
	\begin{eqnarray*}
		\linedens_\ell(y^{(m)}, \infty,\eps)
		&=&
		\dens(\kkk_{\ell'}),
	\\ 
		\lineDens_\ell(y^{(m)}, \infty,\eps)
		&=&
		\lineDens_{\ell+1} + \dens(\kkk_{\ell'})
		,
	\\ 
		\rr_{\ell}(y^{(m)}, \infty,\eps)
		&=&
		\rr_{\ell+1} + \ell \dens(\kkk_{\ell'}).
	\end{eqnarray*}
\end{theorem}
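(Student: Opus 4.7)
The plan is to start from Theorem~\ref{T:formulas-via-infinite-sums}, which already expresses each of the three quantities as an infinite sum of weighted values of $\dens(\kkk_l)$ over $l\ge\ell'$. Splitting off the $l=\ell'$ term of each sum and using $\ell'-(m+h-2)=\ell$ immediately produces the three recurrent relations; iterating them $\recog_0-\ell'$ times reduces the case $\ell'<\recog_0$ to the base case $\ell'=\recog_0$. So the real work lies in deriving the closed-form formulas for $\ell'\ge\recog_0$.

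For that case, I would reindex the tail sums using the self-similarity in Theorem~\ref{THM:density}: every $l\ge\recog_0$ with $\dens(\kkk_l)>0$ admits a unique representation $l=q^k(l_0+c)-c$ with $l_0\in[\recog_0,\recog)$ and $k\in\NNN_0$, and under this representation $\dens(\kkk_l)=q^{-2k}\dens(\kkk_{l_0})$. The key combinatorial claim is that, with $j=j(\ell')$ and $\ell_0=\ell_0(\ell')$ as defined in the theorem, the pairs $(l_0,k)$ whose associated $l$ is at least $\ell'$ form the disjoint union
\[
	\{(l_0, j) : l_0 \in [\ell_0, \recog)\}\ \cup\ \{(l_0, k) : k > j,\ l_0 \in [\recog_0, \recog)\}.
\]
That no pair with $k<j$ contributes is by minimality of $j$; that every pair with $k>j$ contributes fully follows from the defining inequality $\recog_0 q+(\alpha+\beta)\ge\recog$, which rearranges to $\recog_0 q^{k+1}+c(q^{k+1}-1)>(\recog-1)q^k+c(q^k-1)\ge\ell'$ for every $k\ge j$.

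With the partition in hand, the rest is a geometric-series bookkeeping. Substituting $\dens(\kkk_l)=q^{-2k}\dens(\kkk_{l_0})$ into $\sum_{l\ge\ell'}\dens(\kkk_l)$ and computing $\sum_{k>j}q^{-2k}=q^{-2j}/(q^2-1)$ collapses $\lineDens_\ell$ into a finite combination of the base-level densities, which then regroups via $\nu_\recog^\omega=\nu_{\ell_0}^\omega+\tilde\nu_{\ell_0}^\omega$ into the stated closed form. For $\rr_\ell$, one first uses $l+c=q^k(l_0+c)$ to write each summand as
\[
	[l-(m+h-2)]\dens(\kkk_l) = q^{-k}(l_0+c)\dens(\kkk_{l_0}) - (c+m+h-2)\,q^{-2k}\dens(\kkk_{l_0}),
\]
so that separate geometric series of ratios $q^{-1}$ and $q^{-2}$ yield, respectively, the $\nu^\rr$ contribution and a correction proportional to $\lineDens_\ell$.

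The main obstacle is not analytic but combinatorial: making sure the reindexing is exact at the boundary $k=j$. Both directions --- no level below $j$ reaches $\ell'$, and every level above $j$ strictly dominates the maximum at level $j$ --- hinge on the inequality $\recog_0 q + (\alpha+\beta)\ge\recog$ together with the strict monotonicity of $l_0 q^k+c(q^k-1)$ in both variables; I would verify these explicitly rather than leave them implicit, since an off-by-one error here would propagate into every one of the stated closed-form formulas.
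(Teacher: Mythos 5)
Your proposal is correct and takes essentially the same route as the paper: reduce to the base case via the split-off recurrences, reindex the tail sums through the unique representation $l=q^k(l_0+c)-c$ from Theorem~\ref{THM:density} using exactly the index decomposition $\{k=j,\ l_0\in[\ell_0,\recog)\}\cup\{k>j,\ l_0\in[\recog_0,\recog)\}$ (the paper writes the same set as $\sum_{l\in[\recog_0,\ell_0)}\Phi^{\omega}_{j+1,l}+\sum_{l\in[\ell_0,\recog)}\Phi^{\omega}_{j,l}$), and then sum geometric series; your explicit check of the boundary level $k=j$ via $\recog_0 q+(\alpha+\beta)\ge\recog$ is the one step the paper leaves implicit, and it is verified correctly. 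One caveat: carrying your bookkeeping to the end yields $\frac{1}{q^{2j}(q^2-1)}\bigl[\nu^{\numberofdiag}_{\ell_0}+q^2\tilde\nu^{\numberofdiag}_{\ell_0}\bigr]$ for $\lineDens_\ell$ and a correction term $-(c+m+h-2)\cdot\lineDens_\ell$ in $\rr_\ell$ (both consistent with the examples in Section~\ref{S:examples}), so you should not expect to land literally on the printed coefficients $q\tilde\nu^{\numberofdiag}_{\ell_0}$ and $c(m+h-2)$, which appear to be misprints in the statement rather than flaws in your argument.
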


\begin{proof}
It suffices to prove the result in the case $m=h=1$ and $\ell\ge\recog_0$. Put
\begin{equation*}
	\varphi_\ell^{\numberofdiag}=\dens(\kkk_\ell),
	\qquad
	\varphi_\ell^{\rr}=\ell \dens(\kkk_\ell),
\end{equation*}
and, for any $\omega\in\{\numberofdiag,\rr\}$ and $j,\ell_0\in\NNN_0$,
\begin{equation*}
	\Phi_{j,\ell_0}^\omega = \sum_{k\ge j} \varphi_{\ell_0 q^k + c(q^k-1)}^\omega.
\end{equation*} 
By Theorems~\ref{T:formulas-via-infinite-sums} and \ref{THM:density}, 
the quantities $\lineDens_\ell$ and $\rr_\ell$
can be written as (where $\omega$ is $\numberofdiag$ and $\rr$, respectively)
\begin{equation*}
	\sum_{l\in[\recog_0,\ell_0)} \Phi_{j+1,l}^\omega 
	+ \sum_{l\in[\ell_0,\recog)} \Phi_{j,l}^\omega.
\end{equation*}
Now, to obtain the theorem, it suffices to use the fact that 
$\dens(\kkk_{\ell_0 q^k + c(q^k-1)}) = q^{-2k}\dens(\kkk_{\ell_0})$ by Theorem~\ref{THM:density},
and the formulas for sums of geometric and arithmetico-geometric sequences:
\begin{equation*}
	\sum_{k\ge j} x^k = \frac{x^j}{1-x}
	\quad\text{and}\quad
	\sum_{k\ge j} kx^k = \frac{jx^j - (j-1)x^{j+1}}{(1-x)^2}\,.
\end{equation*}
\end{proof}

\begin{theorem}\label{T:formulas-ent}
	Let $\zeta$ be a primitive aperiodic binary substitution of constant length
	satisfying \eqref{assumpt-start-with-0}, and $x=\zeta^\infty(0)$ be a unique fixed point
	of $\zeta$ starting with $0$.
	Fix any $m,\ell\in\NNN$ and $\eps=2^{-h}$ ($h\in\NNN$); put $\ell'=\ell+m+h-2$.
	If $\ell'\ge\recog_0$, then
	\begin{eqnarray*}
		\ent_\ell(x^{(m)},\infty,\eps) 
		&=&
		\log \lineDens_\ell
		-
		\frac{1}{\lineDens_\ell}
		\widetilde{\ent}_\ell(x^{(m)},\infty,\eps) , \qquad\text{where}
	\\ 
		\widetilde{\ent}_\ell(x^{(m)},\infty,\eps) 
		&=&
		\frac{2\log q}{q^{2j}(q^2-1)^2} \big[
			((j+1)q^2-j)\nu_{\ell_0}^{\numberofdiag}   
			+ q^2(jq^2-j+1) \tilde\nu_{\ell_0}^{\numberofdiag}
		\big]	
	\\			
		&&+
		\frac{1}{q^{2j}(q^2-1)} \big[
			\nu_{\ell_0}^{\ent}   
			+ q^2 \tilde\nu_{\ell_0}^{\ent}
		\big].
	\end{eqnarray*}
	If $\ell'<\recog_0$, the formulas for these quantifiers can be obtained from those for $\ell+m+h-2=\recog_0$
	and from the following recurrent relation (where we omit the arguments $x^{(m)}$, $\infty$ and $\eps$):
	\begin{eqnarray*}
		\widetilde{\ent}_\ell
		&=&
		\widetilde{\ent}_{\ell+1} - \dens(\kkk_{\ell'}) \log \dens(\kkk_{\ell'}).
	\end{eqnarray*}
\end{theorem}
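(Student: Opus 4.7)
The plan is to mirror the proof of Theorem~\ref{T:formulas}, with the extra wrinkle that the summands $\dens(\kkk_l)\log\dens(\kkk_l)$ are no longer linear in $\dens(\kkk_l)$. As in the proof of Theorem~\ref{T:formulas}, by Theorem~\ref{T:formulas-via-infinite-sums} it is enough to work with $x=\zeta^\infty(0)$. Since the $\log\lineDens_\ell$ piece of $\ent_\ell$ is already computed in Theorem~\ref{T:formulas}, the theorem reduces to expressing the quantity
\[
	\widetilde{\ent}_\ell \ = \ -\sum_{l\ge \ell'}\dens(\kkk_l)\log\dens(\kkk_l)
\]
in closed form. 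The case $\ell'<\recog_0$ would then be immediate from the definition by peeling off the $l=\ell'$ term, giving the stated recurrent relation.

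Assume therefore $\ell'\ge\recog_0$. I would invoke Theorem~\ref{THM:density}, which uniquely parameterizes each relevant $l\ge\recog$ as $l=\ell_0 q^k + c(q^k-1)$ with $\ell_0\in[\recog_0,\recog)$, $k\ge 1$, and gives $\dens(\kkk_l) = q^{-2k}\dens(\kkk_{\ell_0})$. The key identity is
\[
	\dens(\kkk_l)\log\dens(\kkk_l)
	\ = \ q^{-2k}\dens(\kkk_{\ell_0})\log\dens(\kkk_{\ell_0})
	- 2k(\log q)\,q^{-2k}\dens(\kkk_{\ell_0}),
\]
which splits $\widetilde{\ent}_\ell$ into a geometric part (contributing the $\nu^\ent$, $\tilde\nu^\ent$ terms) and an arithmetico-geometric part (contributing the $\nu^\numberofdiag$, $\tilde\nu^\numberofdiag$ terms, prefactored by $2\log q$).

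Following the bookkeeping of the proof of Theorem~\ref{T:formulas}, I would then split the outer range $\ell_0\in[\recog_0,\recog)$ at $\ell_0(\ell)$, so that $\ell_0<\ell_0(\ell)$ forces the inner $k$-sum to start at $j+1$ and $\ell_0\ge\ell_0(\ell)$ forces it to start at $j$, and evaluate each piece using the closed forms
\[
	\sum_{k\ge j} q^{-2k}\ =\ \frac{q^2}{q^{2j}(q^2-1)},
	\qquad
	\sum_{k\ge j} k q^{-2k}\ =\ \frac{q^2(jq^2-j+1)}{q^{2j}(q^2-1)^2}.
\]
Assembling the four contributions produces exactly the stated formula: the prefactors $(j+1)q^2-j$ and $q^2(jq^2-j+1)$ arise precisely from evaluating the arithmetico-geometric sum with the starting indices $j+1$ and $j$, respectively, and factoring out $q^{2j}(q^2-1)^2$.

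The main obstacle is the algebraic bookkeeping, namely lining up the two ranges of $\ell_0$ with the corresponding starting indices of the $k$-sum and verifying that the numerators $(j+1)q^2-j$ and $jq^2-j+1$ emerge correctly, rather than any new conceptual ingredient beyond what is already used in Theorem~\ref{T:formulas}; the only additional analytical fact beyond the geometric series is the summation of $\sum_k k q^{-2k}$, which is responsible for the single extra $\log q$ factor appearing in the formula.
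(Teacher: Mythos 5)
Your proposal is correct and follows essentially the same route as the paper, whose proof simply reruns the bookkeeping of Theorem~\ref{T:formulas} with $\varphi_\ell^{\ent}=-\dens(\kkk_\ell)\log\dens(\kkk_\ell)$ in place of $\varphi_\ell^{\numberofdiag}$ and $\varphi_\ell^{\rr}$; your identity $\dens(\kkk_l)\log\dens(\kkk_l)=q^{-2k}\dens(\kkk_{\ell_0})\log\dens(\kkk_{\ell_0})-2k(\log q)q^{-2k}\dens(\kkk_{\ell_0})$, together with the geometric and arithmetico-geometric sums starting at $j$ or $j+1$, is exactly what that substitution amounts to and reproduces the stated coefficients. (Only cosmetic points: the split of the outer range should be at $\ell_0(\ell')$, i.e.\ with $j=j(\ell')$, as in the paper's examples, and your convention $\widetilde{\ent}_\ell=-\sum_{l\ge\ell'}\dens(\kkk_l)\log\dens(\kkk_l)$ is the one consistent with the theorem's closed formula and recurrence.)
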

\begin{proof}
	The proof is analogous to that of Theorem~\ref{T:formulas},
	with a function $\varphi_\ell^{\ent}=-\dens(\kkk_\ell)\log\dens(\kkk_\ell)$ used
	instead of $\varphi_\ell^\numberofdiag$ and $\varphi_\ell^\rr$.
\end{proof}

Theorems~\ref{T:formulas} and \ref{T:formulas-ent} 
together with the algorithm for calculating the densities of the sets $\kkk_\ell$ \cite[Appendix~B]{polakova2023symbolic}
allow us to obtain explicit formulas for recurrence characteristics for any substitution satisfying the assumptions.
See the next section for some examples.

\section{Examples}\label{S:examples}

\subsection{Explicit RQA formulas for the Thue-Morse substitution}
Let $\zeta$ be the Thue-Morse substitution $\zeta(0)=01$ and $\zeta(1)=10$; then $\recog=4$
since every allowed $4$-word is recognizable and $\alpha=\beta=0$.
The densities of nonempty sets $\kkk_\ell$ are \cite[Example~5.1]{polakova2023symbolic}
\begin{equation*}
	\dens(\kkk_1)=\frac19,
	\quad
	\dens(\kkk_{2^k})=\frac1{9\cdot 2^{2k-1}},
	\quad
	\dens(\kkk_{3\cdot 2^{k-1}})=\frac1{9\cdot 2^{2k}}
\end{equation*}
for every $k\in\NNN$.
Fix any $m,\ell\in\NNN$ and $\eps=2^{-h}$ ($h\in\NNN$). Put $\ell' = \ell + m+h$
and let $j=j(\ell')\ge 0$ and $\ell_0=\ell_0(\ell')\in\{2,3\}$ be given by \eqref{EQ:formulas-def-j-ell0}
if $\ell'\ge\recog_0=2$, and $j=0$, $\ell_0=1$ if $\ell'=1$.
Now Theorems~\ref{T:formulas} and \ref{T:formulas-ent} yield that
\begin{eqnarray*}
	\lineDens_\ell(y^{(m)},\infty,\eps) 
	&=& 
	\frac{a_{\ell_0}}{9 \cdot 2^{2j+1}} \,,
\\	
	\rr_\ell(y^{(m)},\infty,\eps) 
	&=& 
	\frac{b_{\ell_0}}{9 \cdot 2^{j+1}} 
	-  \frac{(m+h-2)a_{\ell_0}}{9 \cdot 2^{2j+1}}
	\,,
\\
	\ent_\ell(x^{(m)},\infty,\eps) 	
	&=&	
	2\log 2,
\end{eqnarray*}
where
\begin{equation*}
	a_{\ell_0} = \begin{cases}
		2 &\text{if } \ell_0=1,
		\\
		2 &\text{if } \ell_0=2,
		\\
		1 &\text{if } \ell_0=3,
	\end{cases}
	\qquad
	b_{\ell_0} = \begin{cases}
		9 &\text{if } \ell_0=1,
		\\
		7 &\text{if } \ell_0=2,
		\\
		5 &\text{if } \ell_0=3.
	\end{cases}
\end{equation*}
Clearly, formulas for $\deter_\ell$, $\lavg_\ell$ and $\corsum_{\ell}$ 
can be easily derived.

\subsection{Explicit RQA formulas for the period-doubling substitution}
Let $\zeta$ be the period-doubling substitution $\zeta(0)=01$ and $\zeta(1)=00$; then $\recog=3$
since every allowed $3$-word is recognizable and $\alpha=1$, $\beta=0$.
The densities of nonempty sets $\kkk_\ell$ are \cite[Theorem~1]{vspitalsky2018recurrence}
\begin{equation*}
	\dens(\kkk_{2^{k+1}-1})=\frac1{9\cdot 2^{2k}},
	\quad
	\dens(\kkk_{3\cdot 2^{k}-1})=\frac1{9\cdot 2^{2k+1}}
\end{equation*}
for every $k\in\NNN_0$.
Fix any $m,\ell\in\NNN$ and $\eps=2^{-h}$ ($h\in\NNN$). Put $\ell' = \ell + m+h$
and let $j=j(\ell')\ge 0$ and $\ell_0=\ell_0(\ell')\in\{1,2\}$ be given by \eqref{EQ:formulas-def-j-ell0}; note that $\recog_0=1$.
Now Theorems~\ref{T:formulas} and \ref{T:formulas-ent} yield that
(compare with \cite{vspitalsky2018recurrence, polakova2020formulas})
\begin{eqnarray*} 
	\lineDens_\ell(y^{(m)},\infty,\eps) 
	&=& \frac{a_{\ell_0}}{9 \cdot 2^{2j}} \,,
\\	
	\rr_\ell(y^{(m)},\infty,\eps) 
	&=& 
	\frac{b_{\ell_0}}{9 \cdot 2^j} 
	- \frac{(m+h-1)a_{\ell_0}}{9 \cdot 2^{2j}}
	\,,
\\
	\ent_\ell(x^{(m)},\infty,\eps) 	
	&=&	
	2\log 2,
\end{eqnarray*}
where
\begin{equation*}
	a_{\ell_0} = \begin{cases}
		2 &\text{if } \ell_0=1,
		\\
		1 &\text{if } \ell_0=2,
	\end{cases}
	\qquad
	b_{\ell_0} = \begin{cases}
		7 &\text{if } \ell_0=1,
		\\
		5 &\text{if } \ell_0=2.
	\end{cases}
\end{equation*}

\subsection{Explicit RQA formulas for a substitution of length $5$}
Let $\zeta$ be the substitution $\zeta(0)=01110$ and $\zeta(1)=01010$; then $\recog=5$
since every allowed $4$-word is recognizable and $\alpha=\beta=2$.
The densities of nonempty sets $\kkk_\ell$ are \cite[Example~5.2]{polakova2023symbolic}
\begin{equation*}
\begin{split}
	&\dens(\kkk_{2\cdot 5^k-1})=\frac7{2\cdot 5^{2k+2}}\,,
	\quad
	\dens(\kkk_{3\cdot 5^k-1})=\frac3{2\cdot 5^{2k+2}}\,,
\\	
	&\dens(\kkk_{4\cdot 5^k-1})=\frac1{2\cdot 5^{2k+2}}\,,
	\quad
	\dens(\kkk_{5\cdot 5^k-1})=\frac{13}{2\cdot 5^{2k+4}}
\end{split}
\end{equation*}
for every $k\in\NNN_0$. 

Fix any $m,\ell\in\NNN$ and $\eps=2^{-h}$ ($h\in\NNN$). Put $\ell' = \ell + m+h$
and let $j=j(\ell')\ge 0$ and $\ell_0=\ell_0(\ell')\in\{1,2\}$ be given by \eqref{EQ:formulas-def-j-ell0}; note that $\recog_0=1$.
Theorem~\ref{T:formulas} yields that, for every $y\in X_\zeta$, 
\begin{eqnarray*}
	\lineDens_\ell(y^{(m)},\infty,\eps) 
	&=& \frac{a_{\ell_0}}{6 \cdot 5^{2j+2}} \,,
\\	
	\rr_\ell(y^{(m)},\infty,\eps) 
	&=& 
	\frac{b_{\ell_0}}{2 \cdot 5^{j+2}} 
	- 
	\frac{(m+h-1)a_{\ell_0}}{6 \cdot 5^{2j+2}}
	\,,
\end{eqnarray*}
where
\begin{equation*}
	a_{\ell_0} = \begin{cases}
		36 &\text{if } \ell_0=1,
		\\
		15 &\text{if } \ell_0=2,
		\\
		6 &\text{if } \ell_0=3,
		\\
		4 &\text{if } \ell_0=4,
	\end{cases}
	\qquad
	b_{\ell_0} = \begin{cases}
		37 &\text{if } \ell_0=1,
		\\
		23 &\text{if } \ell_0=2,
		\\
		14 &\text{if } \ell_0=3,
		\\
		10 &\text{if } \ell_0=4.
	\end{cases}
\end{equation*}
A (lengthy) formula for entropy of line lengths is omitted.

\section{Recurrence quantification analysis for non-primitive substitutions; proof of Theorem~\ref{THM:nonPrimitive}}\label{S:nonPrimit}

In this section we describe recurrence characteristics of substitution shifts given by non-primitive binary substitutions.
Recall that a dynamical system $(X,f)$ is \emph{proximal} if every pair $(x,y)\in X\times X$ is proximal, 
that is, 
\begin{equation*}
	\liminf_{n\to\infty} \rho(f^n(x), f^n(y)) = 0.
\end{equation*} 
By \cite[Proposition~2.2]{akin2003li}, a system $(X,f)$ is proximal if and only if it has a fixed point which is the unique minimal set of $(X,f)$.

\begin{proposition}\label{P:nonprimit-uniq-ergod}
	Let $\zeta$ be a binary substitution of constant length $q\ge 2$ such that
	\begin{equation}\label{LAB:P:nonprimit-uniq-ergod:zeta} 
		\text{either }\zeta(0)\ne 0^q \text{ and }\zeta(1)=1^q,
		\quad\text{or }\zeta(0)= 0^q\text{ and }\zeta(1)\ne 1^q.
	\end{equation}
	Then the subshift $(X_\zeta,\sigma)$ is uniquely ergodic and 
	its unique invariant measure is the Dirac measure $\delta_{1^\infty}$.	
	Consequently, $(X_\zeta,\sigma)$ is proximal and $\{1^\infty\}$ is a unique minimal set of it.
\end{proposition}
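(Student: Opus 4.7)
The plan is to reduce, via the $0\leftrightarrow 1$ symmetry, to the case $\zeta(1)=1^q$ and $\zeta(0)\ne 0^q$, and then to show directly that $\delta_{1^\infty}$ is the only $\sigma$-invariant probability measure on $X_\zeta$; proximality and the description of the minimal set will follow at once.

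In the reduced case, the letter $1$ appears in $\zeta(0)$, so $1\in\lang_\zeta$, and $\zeta^k(1)=1^{q^k}$ yields $1^n\in\lang_\zeta$ for every $n\ge 1$. Hence $1^\infty\in X_\zeta$, and it is a fixed point of $\sigma$.

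The key combinatorial ingredient is the following estimate. Let $n_0$ denote the number of $0$'s in $\zeta(0)$; by hypothesis $n_0\in\{0,1,\ldots,q-1\}$. Since $\zeta(1)=1^q$ contributes no $0$'s, an easy induction gives that $\zeta^k(0)$ has length $q^k$ and contains exactly $n_0^k$ zeros. If $n_0=0$, then $\lang_\zeta$ contains only the letter $0$ and words of the form $1^m$, forcing $X_\zeta=\{1^\infty\}$ and the proposition is trivial. If $n_0\ge 1$, I use the hierarchical decomposition
\[
  \zeta^k(0)=\zeta^j(b_0)\zeta^j(b_1)\cdots\zeta^j(b_{q^{k-j}-1}),\qquad b_0 b_1\cdots b_{q^{k-j}-1}=\zeta^{k-j}(0).
\]
Any $w\in\lang_\zeta$ of length $\ell$ that contains a $0$ is a subword of some $\zeta^k(0)$ with $q^k\ge\ell$; choosing $j=\lfloor\log_q\ell\rfloor$ (so $j\le k$), the word $w$ meets at most $\ell/q^j+2\le q+2$ of the level-$j$ blocks, and each of them contributes at most $n_0^j$ zeros. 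Therefore the number of $0$'s in $w$ is bounded by $(q+2)n_0^j\le(q+2)\ell^{\log_q n_0}=o(\ell)$ since $n_0<q$.

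Now let $\mu$ be any $\sigma$-invariant probability measure on $X_\zeta$. By shift-invariance, for every $n\ge 1$,
\[
  \mu([0])=\int_{X_\zeta}\frac{1}{n}\sum_{i=0}^{n-1}\chi_{[0]}(\sigma^i y)\,d\mu(y),
\]
and the integrand is the frequency of $0$'s in $y_{[0,n)}$, which by the previous estimate tends to $0$ uniformly in $y\in X_\zeta$. Dominated convergence gives $\mu([0])=0$, hence $\mu([1])=1$, and invariance yields $\mu(\sigma^{-i}[1])=1$ for every $i\ge 0$; intersecting produces $\mu(\{1^\infty\})=1$, so $\mu=\delta_{1^\infty}$. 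Any minimal subset $M\subseteq X_\zeta$ carries some invariant measure (by Krylov--Bogolyubov), which by unique ergodicity must be $\delta_{1^\infty}$; thus $1^\infty\in M$ and minimality forces $M=\{1^\infty\}$. Since $1^\infty$ is a fixed point coinciding with the unique minimal set, \cite[Proposition~2.2]{akin2003li} yields proximality. The only nontrivial step is the counting estimate in the third paragraph; choosing the scale $j$ as a function of $\ell$ and controlling the two incomplete boundary blocks is the main technical piece, and the rest is bookkeeping.
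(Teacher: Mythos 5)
Your proposal is correct and follows essentially the same route as the paper: reduce by the $0\leftrightarrow 1$ symmetry, count zeros in $\zeta^k(0)$, use a hierarchical block decomposition to show the number of zeros in any $\ell$-word of $\lang_\zeta$ is $o(\ell)$ uniformly, deduce $\mu([0])=0$ hence $\mu=\delta_{1^\infty}$, and finish the minimal-set and proximality claims via Krylov--Bogolyubov and \cite[Proposition~2.2]{akin2003li}. The only (harmless) deviation is that where the paper invokes the upper Banach density argument of \cite[Lemma~3.17]{furstenberg1981recurrence} to get $\mu([0])=0$, you give a direct ergodic-average estimate, which is a self-contained substitute for that citation.
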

\begin{proof}
We may assume that $\zeta(0)\ne 0^q$ and $\zeta(1)=1^q$.
For any $w\in A^*$, let $N(w)$ denote the number of zeros in $w$. Put $p=N(\zeta(0))$; then $p\le q-1$. To prove the proposition, we proceed in four steps.

\smallskip
\textit{Step~1.} $N(\zeta^k(0))=p^k$ for every $k\in\NNN$.
\smallskip

This follows from a simple observation that, due to the fact that $\zeta(1)$ does not contain $0$, 
$N(\zeta(w))=pN(w)$ for every $w\in A^*$.

\smallskip
\textit{Step~2.} For $\ell\in\NNN$ put $\triangle_\ell=\max\{N(w)\colon w\in\lang_\zeta,\ \abs{w}=\ell\}$. We claim that
\begin{equation*}
	\lim_{\ell\to\infty} \frac{\triangle_\ell}{\ell} = 0.
\end{equation*}
\smallskip

To prove this, fix any $\ell\ge 2q-1$. Let $k=k_\ell$ be the smallest positive integer such that $\ell\ge 2q^k-1$;
that is, $k=\lfloor \log_q((\ell+1)/2) \rfloor$. Fix any 
$\ell$-word $w\in\lang_\zeta$. Then, by the definition of $X_\zeta$, 
there are $a\in A$, $m\ge 0$ and $i\in[0,q^m-\ell]$ such that $w=\zeta^m(a)_{[i,i+\ell)}$.
Clearly, $m>k$ since $q^m\ge\ell$; put $u=\zeta^{m-k}(a)$.
Let $j,h\in\NNN$ be unique integers such that
\begin{equation*}
	(j-1)q^k\le i < jq^k
	\quad\text{and}\quad
	(j+h)q^k <  i+\ell \le (j+h+1) q^k.
\end{equation*}
Then $h\le 2q-1$ and $w$ is a subword of $\zeta^k(u_{[j-1,j+h+1)})$. So, by Step~1, $N(w)\le (h+2)p^k\le (2q+1)(q-1)^k$.
Since this is true for every $\ell$-word $w\in\lang_\zeta$, we have
$\triangle_\ell/\ell \le  (2q+1)(q-1)^{k_\ell} / (2q^{k_\ell}-1) \to 0$ as $\ell\to\infty$.

\smallskip
\textit{Step~3.} We prove that $\delta_{1^\infty}$ is a unique invariant measure of $(X_\zeta,\sigma)$.
\smallskip

Let $\mu$ be any invariant measure of $(X_\zeta,\sigma)$. Since, by Step~2, the upper Banach density of
the set $\{i\in\NNN\colon x_i=0\}$ is zero, $\mu([0])=0$ by \cite[Lemma~3.17]{furstenberg1981recurrence}.
Since $\mu$ is $\sigma$-invariant, $\mu([w])=0$ for every $w\in \lang_\zeta$ containing zero. 
(In fact, this follows by induction using that 
$\mu([1w])=\mu([1w])+\mu([0w])=\mu(\sigma^{-1}[w])=\mu([w])$ for every $w$.) So $\mu=\delta_{1^\infty}$.

\smallskip
\textit{Step~4.} We finish the proof.
\smallskip

Let $M$ be a minimal set of $(X_\zeta,\sigma)$. Fix an invariant measure $\mu$ of $(M,\sigma|_M)$;
then the support $\supp(\mu)$ of $\mu$ is $M$. Since $\mu$ is also an invariant measure of $(X_\zeta,\sigma)$,
we have that $\mu=\delta_{1^\infty}$ and so $M=\supp(\mu)=\{1^\infty\}$. By \cite[Proposition~2.2]{akin2003li},
$(X_\zeta,\sigma)$ is proximal.
\end{proof}

\begin{remark}\label{REM:nonPrimitive}
If $\zeta$ is not primitive then at least one of $\zeta(0)$, $\zeta(1)$ belongs to 
$\{0^q,1^q\}$; moreover, if $\zeta(0)=1^q$ or $\zeta(1)=0^q$, then 
both $\zeta(0)$ and $\zeta(1)$ belongs to $\{0^q,1^q\}$.
Thus, if non-primitive $\zeta$ does not satisfy 
\eqref{LAB:P:nonprimit-uniq-ergod:zeta},
then either $\zeta(0)=0^q$ and $\zeta(1)=1^q$, or  $\zeta(0)=1^q$ and $\zeta(1)=0^q$.
In both cases $X_\zeta=\{0^\infty,1^\infty\}$ and the $\zeta$-subshift is not uniquely ergodic.
\end{remark}

\begin{proof}[Proof of Theorem~\ref{THM:nonPrimitive}]
	If $\zeta$ does not satisfy 
	\eqref{LAB:P:nonprimit-uniq-ergod:zeta},
	then $X_\zeta=\{0^\infty,1^\infty\}$ by Remark~\ref{REM:nonPrimitive}
	and the statement of the theorem is trivial. So we may assume that $\zeta$ satisfies 
		\eqref{LAB:P:nonprimit-uniq-ergod:zeta}.
	
	Put $\mu=\delta_{1^\infty}$ and fix any $y\in X_\zeta$,  $m,\ell\in\NNN$ and $\eps>0$; 
	we may assume that $\eps<1$. Since $(X_\zeta,\sigma)$ is uniquely ergodic
	by Proposition~\ref{P:nonprimit-uniq-ergod}, 
	\begin{equation*}
		\corsum_\ell(y^{(m)},\infty,\eps)
		= \mu\times\mu\{
			(z,z')\in\Sigma\times\Sigma\colon \rho_\ell(z,z')\le 2^{-m+1}\eps
		\}
		=1
	\end{equation*}
	by Proposition~\ref{PROP:embeddedCorSum} and \eqref{EQ:corr-integral}.
	This and Theorem~\ref{THM:rr-and-lavg-via-corsum} 
	imply that $\rr_\ell(y^{(m)},\infty,\eps)=1$. Since also $\rr_1(y^{(m)},\infty,\eps)=1$, 
	we have $\deter_\ell(y^{(m)},\infty,\eps)=1$.
	Further, $\lavg_\ell(y^{(m)},\infty,\eps)=\infty$ by Theorem~\ref{THM:rr-and-lavg-via-corsum}.
\end{proof}

We do not know whether, under the assumptions of Theorem~\ref{THM:nonPrimitive}, the entropy of line lengths $\ent_\ell(y^{(m)},\infty,\eps)$ is infinite.

\section{Proof of Theorem~\ref{T:determinism}}\label{S:limOfDet}

In this section, we give a proof of Theorem~\ref{T:determinism} stating that the determinism of any uniform binary substitution converges to $1$ as $\eps\to 0$. 
We start with a lemma for primitive aperiodic substitutions.

\begin{lemma}\label{LMM:RR/RR-to-infty}
	Let $\zeta$ be a primitive aperiodic binary substitution of constant length.
	Then, for every $y\in X_\zeta$ and $m,\ell \in\NNN$,
	$$
		\lim_{h \to \infty}\dfrac{\rr_{\ell+1} (y^{(m)},\infty, 2^{-h})}{\rr_\ell(y^{(m)},\infty, 2^{-h})} = 1.
	$$
\end{lemma}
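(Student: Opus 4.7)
The plan is to reduce the lemma, via Theorem~\ref{T:formulas-via-infinite-sums}, to a single ratio involving $\dens(\kkk_{\ell'})$ and the tail sum defining $\rr_\ell$, and then to extract the required lower bound on that tail sum from the self-similar structure of Theorem~\ref{THM:density}. As in the proof of Theorem~\ref{T:formulas-via-infinite-sums}, I may assume $\zeta$ satisfies \eqref{assumpt-start-with-0}; by Remark~\ref{REM:rr-and-lavg-via-corsum} the ratio does not depend on $y$, so I take $y=x=\zeta^\infty(0)$. Writing $\ell'=\ell+m+h-2$ and $M=m+h-2$, Theorem~\ref{T:formulas-via-infinite-sums} together with $\ell'-M=\ell$ yields the telescoping identity
\[
\rr_\ell(x^{(m)},\infty,2^{-h}) - \rr_{\ell+1}(x^{(m)},\infty,2^{-h}) \ =\ \ell\,\dens(\kkk_{\ell'}),
\]
so it is enough to show that $\ell\,\dens(\kkk_{\ell'})/\rr_\ell(x^{(m)},\infty,2^{-h})\to 0$ as $h\to\infty$.

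For the numerator, Lemma~\ref{LMM:ohranicenie-zhora}, taken in the limit $n\to\infty$, gives $\dens(\kkk_{\ell'})\le \tilde c/(\ell')^2$ (the bound is trivial when $\kkk_{\ell'}=\emptyset$). For the denominator, since $l-M=l-\ell'+\ell\ge \ell'$ whenever $l\ge 2\ell'$, Theorem~\ref{T:formulas-via-infinite-sums} yields
\[
\rr_\ell(x^{(m)},\infty,2^{-h}) \ \ge\ \ell'\sum_{l\ge 2\ell'}\dens(\kkk_l),
\]
so it remains to produce a lower bound of order $1/(\ell')^2$ on this tail sum.

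For this key step I plan to exhibit even a single index $l^\star\ge 2\ell'$ lying in $\LLl:=\{l\colon \kkk_l\ne\emptyset\}$ with $l^\star=O(\ell')$ and $\dens(\kkk_{l^\star})=\Omega(1/(\ell')^2)$. Fix any $l_0\in[1,\recog)\cap\LLl$; such an $l_0$ exists because, by Proposition~\ref{PROP:linedens-and-densK}, $\LLl$ is infinite and, by Theorem~\ref{THM:density}, every element of $\LLl$ of size at least $\recog$ descends to a smaller element of $\LLl$. The progression $l^{(k)}=(l_0+c)q^k-c$ ($k\ge 1$) then lies entirely in $\LLl$, and the recurrence $l^{(k+1)}=q\,l^{(k)}+(q-1)c$ guarantees that, for each sufficiently large $N$, the first $l^{(k)}\ge N$ satisfies $l^{(k)}\le qN+(q-1)c$. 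Applying this with $N=2\ell'$ and using the matching lower bound $\dens(\kkk_{l^{(k)}})\ge \tilde c_0/(l^{(k)})^2$ from Lemma~\ref{LMM:ohranicenie-zhora} produces a constant $C>0$ with $\sum_{l\ge 2\ell'}\dens(\kkk_l)\ge C/(\ell')^2$ for all large $\ell'$. Combining everything,
\[
\frac{\ell\,\dens(\kkk_{\ell'})}{\rr_\ell(x^{(m)},\infty,2^{-h})}
\ \le\ \frac{\ell\,\tilde c/(\ell')^2}{\ell'\cdot C/(\ell')^2}
\ =\ \frac{\ell\,\tilde c}{C\,\ell'} \ \longrightarrow\ 0
\]
as $h\to\infty$, since $\ell'=\ell+m+h-2\to\infty$.

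The main obstacle is exactly this lower bound on $\rr_\ell$: both the first term $\ell\,\dens(\kkk_{\ell'})$ and the full tail $\ell'\sum_{l\ge 2\ell'}\dens(\kkk_l)$ are a priori of order $1/(\ell')^2$ and $1/\ell'$ respectively, so the winning factor $1/\ell'$ is delivered not by the density but by the weight $l-M\approx\ell'$ carried by terms with $l\ge 2\ell'$. Ensuring that at least one such term is actually present---rather than allowing $\LLl$ to leave an arbitrarily large gap above $2\ell'$---is precisely what the geometric-progression argument, powered by the self-similarity in Theorem~\ref{THM:density}, is designed to prevent.
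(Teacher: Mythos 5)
Your reduction is the same as the paper's (the telescoping identity $\rr_\ell-\rr_{\ell+1}=\ell\,\dens(\kkk_{\ell'})$ from Theorem~\ref{T:formulas-via-infinite-sums}, then showing $\ell\,\dens(\kkk_{\ell'})/\rr_\ell\to 0$), and your numerator bound and the inequality $\rr_\ell\ge\ell'\sum_{l\ge 2\ell'}\dens(\kkk_l)$ are correct. The gap is exactly at the step you flag as the crux: the claim that the progression $l^{(k)}=(l_0+c)q^k-c$ starting from an \emph{arbitrary} $l_0\in[1,\recog)\cap\LLl$ lies entirely in $\LLl$ does not follow from Theorem~\ref{THM:density}. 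That theorem is a \emph{downward} statement: a nonempty $\kkk_\ell$ with $\ell\ge\recog$ decomposes uniquely over a base $\ell_0\in[\recog_0,\recog)$; it says nothing about nonemptiness propagating \emph{upward} along a progression, and it gives no information at all for bases below $\recog_0$ (indeed the density scaling genuinely fails there: for Thue--Morse, $\dens(\kkk_2)=1/18\ne\dens(\kkk_1)/q^2=1/36$). Infinitude of $\LLl$ plus the downward decomposition only says that some base progression meets $\LLl$ infinitely often, not that $\LLl$ has bounded multiplicative gaps, which is what your choice of $l^\star\in[2\ell',O(\ell')]$ requires. The claim you need is true, but its justification lives in \cite{polakova2023symbolic} (applying $\zeta$ to an inner line of length $l$ produces an inner line of length $ql+\alpha+\beta$, so membership in $\LLl$ does propagate upward); as written, your appeal to Theorem~\ref{THM:density} does not support it.

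For comparison, the paper's proof avoids needing any absolute lower bound on the tail: it observes that if $\dens(\kkk_{\ell'})=0$ the ratio is exactly $1$, and otherwise it keeps only the single term $l_1=q\ell'+\alpha+\beta$ of the tail, whose density is $\dens(\kkk_{\ell'})/q^2$, so the unknown density cancels and one gets $\ell\,\dens(\kkk_{\ell'})/\rr_\ell\le \ell q^2/\big((q-1)\ell'+\alpha+\beta+\ell\big)<\ell q^2/h$. This needs only one upward step from $\ell'\ge\recog$ itself (again resting on the structure results of the companion paper), and dispenses with the constants $\tilde c,\tilde c_0$ and the $2\ell'$-tail estimate entirely. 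If you either import the upward-propagation fact explicitly or switch to comparing $\dens(\kkk_{\ell'})$ with $\dens(\kkk_{q\ell'+\alpha+\beta})$ as the paper does, your argument closes.
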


\begin{proof}
	Put $\eps = 2^{-h}$ and $k=m+h-2$;
	we may assume that $h$ is such that $\ell+k \geq \rho$.
	By Theorem~\ref{T:formulas-via-infinite-sums}, 
	\begin{equation*}
		\dfrac
			{\rr_{\ell+1} (y^{(m)},\infty, \eps)}
			{\rr_\ell(y^{(m)},\infty, \eps)}
		=
		1 - 
		\dfrac
			{\ell \dens(\kkk_{\ell+k})}
			{\sum\limits_{l=\ell}^{\infty} l \dens(\kkk_{l+k})} 
		\,.
	\end{equation*}
	If $\dens(\kkk_{\ell+k}) = 0$, then $\rr_{\ell+1}/ \rr_\ell = 1$. 
	Assume that	$\dens(\kkk_{\ell+k}) \neq 0$.
	Put $l_0 = \ell+k$ and $l_1 = q l_0 + \alpha + \beta$.
	By Theorem~\ref{THM:density}, $\dens(\kkk_{l_1}) = \dens(\kkk_{l_0})/q^2$.
	Thus
	\begin{equation*}
		\dfrac
			{\ell \dens(\kkk_{\ell+k})}
			{\sum\limits_{l=\ell}^{\infty} l \dens(\kkk_{l+k})} 
		\leq
		\dfrac
			{\ell \dens(\kkk_{l_0})}
			{(l_1-l_0+\ell)\dens(\kkk_{l_1})}
		= 
		\dfrac
			{\ell q^2}
			{(q-1) l_0 + \alpha + \beta +\ell} 
		<
		\dfrac{\ell q^2}{h}.
	\end{equation*}
	From this estimate the result immediately follows.
\end{proof}

\begin{proof}[Proof of Theorem \ref{T:determinism}]
If $\zeta$ is not primitive, \eqref{EQ:determinism} is trivial by Theorem~\ref{THM:nonPrimitive}.
If $\zeta$ is primitive but not aperiodic, then every $y\in X_\zeta$ is $\sigma$-periodic;
thus $\deter_\ell(y^{(m)},\infty,\eps)=1$ for every sufficiently small $\eps>0$ 
and \eqref{EQ:determinism} is again trivial.
Finally, if $\zeta$ is primitive and aperiodic, it suffices to use
\begin{equation*}
	\deter_\ell = \prod_{k=1}^{\ell-1} \dfrac{\rr_{k+1}}{\rr_k}
\end{equation*}
and Lemma~\ref{LMM:RR/RR-to-infty}.
\end{proof}


\section*{Acknowledgments}
This work was supported by VEGA grant 1/0158/20.

\bibliography{subst-rqa}

\end{document}